\documentclass[a4paper, 11pt, reqno]{amsart}

\usepackage{amsmath,amssymb,amscd,amsthm,amsxtra, esint,bbm}

\usepackage{mathtools}
\usepackage[linktoc=page,colorlinks=true,linkcolor=blue,citecolor=red]{hyperref}

\usepackage{enumitem}

\usepackage{marginnote}



\makeatletter
\renewcommand{\subjclassname}{%
  \textup{1991} Mathematics Subject Classification}
\@xp\let\csname subjclassname@1991\endcsname \subjclassname
\@namedef{subjclassname@2000}{%
  \textup{2000} Mathematics Subject Classification}
\@namedef{subjclassname@2010}{%
  \textup{2010} Mathematics Subject Classification}
  \@namedef{subjclassname@2020}{%
  \textup{2020} Mathematics Subject Classification}
\makeatother

\headheight=8pt
\topmargin=0pt
\textheight=624pt
\textwidth=432pt
\oddsidemargin=18pt
\evensidemargin=18pt

\allowdisplaybreaks[2]

\sloppy

\hfuzz  = 0.5cm 

\setlength{\pdfpagewidth}{8.50in}
\setlength{\pdfpageheight}{11.00in}

\newtheorem{theorem}{Theorem}[section]
\newtheorem{lemma}[theorem]{Lemma}
\newtheorem{proposition}[theorem]{Proposition}
\newtheorem{remark}[theorem]{Remark}
\newtheorem{example}{Example}
\newtheorem{definition}[theorem]{Definition}

\newenvironment{assumption}[1]
  {\innerassumption}
  {\endinnerassumption}


\DeclareMathOperator*{\supp}{supp}

\newcommand{\noi}{\noindent}
\newcommand{\Z}{\mathbb{Z}}
\newcommand{\R}{\mathbb{R}}
\newcommand{\C}{\mathbb{C}}
\newcommand{\T}{\mathbb{T}}

\newcommand{\deff}{\overset{\textup{def}}{=}}

\let\P= \undefined
\newcommand{\P}{\mathbf{P}}

\newcommand{\EE}{\mathcal{E}}
\newcommand{\CC}{\mathcal{C}}
\renewcommand{\L}{\mathcal{L}}

\newcommand{\NN}{\mathcal{N}}

\newcommand{\RR}{\mathcal{R}}
\newcommand{\K}{\mathbb{K}}
\newcommand{\F}{\mathcal{F}}

\newcommand{\eps}{\varepsilon}
\newcommand{\ft}{\widehat}
\newcommand{\wt}{\widetilde}
\newcommand{\cj}{\overline}
\newcommand{\dx}{\partial_x}

\newcommand{\dt}{\partial_t}
\newcommand{\les}{\lesssim}

\newcommand{\jb}[1]
{\langle #1 \rangle}

\newcommand{\ind}{\mathbf 1}

\renewcommand{\S}{\mathcal{S}}

\newcommand{\ZZ}{\mathcal{Z}}

\newcommand{\M}{\mathcal{M}}

\newcommand{\N}{\mathbb{N}}
\renewcommand{\SS}{\mathbb{S}}
\newcommand{\Sb}{\mathbf{S}}

\newtheorem*{ackno}{Acknowledgements}

\numberwithin{equation}{section}
\numberwithin{theorem}{section}

\begin{document}
\baselineskip = 15pt

\title[Well-posedness for modulated dispersive PDEs]
{Remarks on nonlinear dispersive PDEs with rough dispersion management}

\author{Tristan Robert}

\address{\small{Université de Lorraine, CNRS, IECL, F-54000 Nancy, France}}

\email{tristan.robert@univ-lorraine.fr}

\subjclass[2020]{35Q55,35Q53,35Q60,60L50}

\keywords{Nonlinear Schr\"odinger equation, dispersion management, regularization by noise}

\begin{abstract}
In this work, we study the Cauchy problem for a class of dispersive PDEs where a rough time coefficient is present in front of the dispersion. Under minimal assumptions on the occupation measure of this coefficient, we show that for the large class of semilinear dispersive PDEs whose well-posedness theory relies on linear estimates of Strichartz or local smoothing type, one has the same well-posedness theory with or without the modulation. We also show a regularization by noise type of phenomenon for rough modulations, namely, large data global well-posedness in the focusing mass-critical case for the modulated equation. Under rougher assumptions on the modulation, we show that one can also transfer the well-posedness theory based on multilinear Fourier analysis from the original dispersive PDE to the modulated one. In the case of the NLS equation on $\mathbb{R}^d$ and $\mathbb{T}^d$, this covers all the sub-critical and critical regularities, thus completing and extending the various results currently available in the literature. At last, in the case of the periodic Wick-ordered cubic NLS, we show an even stronger form of regularization by noise, namely well-posedness in the critical Fourier-Lebesgue space for rough modulations.
\end{abstract}

%
\maketitle

\vspace{-5mm}

\tableofcontents



\section{Introduction}
\subsection{Setting}
We consider abstract dispersive PDEs with modulated dispersion:
\begin{align}\label{EQ}
\begin{cases}{\displaystyle \dt u+\frac{d W}{dt}\L u+ \NN(u)=0,}\\
u_{|t=0}=u_0
\end{cases}
~~u:(t,x)\in\R\times \M\to \K,
\end{align}
where $\M$ is some boundaryless manifold to be specified, $\K\in\{\R;\C\}$, $\L$ is a skew-adjoint operator, and $\NN$ is a nonlinear function of $u$ possibly depending also on its derivatives. The modulation function $W_t:\R\to\R$ in \eqref{EQ} is a given continuous but not necessarily differentiable function, and its time derivative in \eqref{EQ} above is in general interpreted in the sense of distribution. Actually, to avoid the issue of ill-defined products due to a potential lack of regularity in time, we will always work at the level of the mild (Duhamel) formulation of \eqref{EQ}, namely
\begin{align}\label{Duhamel}
u(t)=e^{-(W_t-W_0)\L}u_0-\int_0^te^{-(W_t-W_\tau)\L}\NN(u(\tau))d\tau,
\end{align}
which indeed can be made sense of as soon as $W_t$ is continuous.

The model \eqref{EQ} has been widely studied in the particular case of the nonlinear Schr\"odinger (NLS) equation, namely $\L=-i\Delta$ and $\NN(u)=|u|^{p-2}u$ for some $p\ge 2$. In this context, when $\M=\R$, the NLS equation with modulation \eqref{EQ} appears as a model for the propagation of a signal in an optical fibre with dispersion management; due to their fruitful applications to the improvement on the transmission rate in optical fibres, there is a wide physical literature on dispersion-managed solitons, see e.g. \cite{GT1,GT2,KH,Ku,LK,MM,MMetal,Zetal}, the reviews \cite{PhysRev1,PhysRev2}, and references therein. In turn, this sparked more rigorous mathematical investigations \cite{ASS,CHL1,CHL2,CLL,CL,DBD,DT,HL1,HL2,KMZ,Murphy,MVH1,MVH2}. In these works, $\frac{dW_t}{dt}$ is typically assumed to be regular enough, e.g. periodic and piece-wise constant with a finite number of discontinuities. In this context one is often interested in effective models as the period goes to zero, in either the so-called strong \cite{CHL1,CHL2,CL,HL1,HL2,KM,KMZ,Murphy,MVH1,MVH2} or fast \cite{ASS,CMVH} dispersion regimes, depending if the amplitude of the modulation also blows-up as the period goes to zero. 

In contrast, in \cite{DBD,DT,Marty1}, the case of a random dispersion is considered, and the effective model appearing as a scaling limit for optical fibres with random dispersion management is given by \eqref{EQ} with $W_t$ being a Brownian motion. This motivated the study of \eqref{EQ} with \emph{rough} modulation $W_t$. Note that our abstract result in Theorem~\ref{THM:noreg}~(i) below applies to all three settings, i.e. the usual setting $W_t=t$, the case of $\frac{dW_t}{dt}$ periodic and piece-wise constant with a finite number of discontinuities, or the case of rougher $W_t$; see Remark~\ref{REM:A0} below.

Let us now describe the literature regarding the NLS equation with \emph{rough} modulation. In \cite{DBD}, de Bouard and Debussche studied the modulated NLS equation \eqref{EQ} on $\M=\R^d$ and with Brownian modulation. Using stochastic properties of $W_t$, they established global well-posedness of \eqref{EQ} in $L^2(\R^d)$ for mass-subcritical nonlinearities $\NN(u)=|u|^{2\sigma}u$, $0<\sigma<\frac2d$. In dimension $d=1$, the mass-critical case $\sigma=2$ was considered by Debussche and Tsutsumi \cite{DT}, who proved a \emph{regularization by noise} phenomenon: the mass-critical NLS equation \eqref{EQ} with white noise dispersion is globally well-posed in $L^2(\R)$ even for large data and focusing nonlinearity. This is in sharp contrast with the deterministic focusing mass-critical NLS equation \eqref{EQ}, for which finite time blow-up occurs for data with mass bigger than that of the ground state \cite{Merle}. Then Duboscq and Réveillac \cite{DR} proved a path-wise version of the results in \cite{DBD,DT} by establishing path-wise dispersive and Hardy-Littlewood-Sobolev inequalities adapted to $W_t$ a (fractional) Brownian motion in place of $t$. Following the development of deterministic regularization by noise in the context of ODEs by Catellier and Gubinelli \cite{CaG}, Chouk and Gubinelli \cite{CG1}, and Galeati \cite{GaThesis}\footnote{See the discussion before Theorem 5.37 in \cite{GaThesis}.} established a deterministic version of the results in \cite{DBD,DT,DR}, under appropriate assumptions on the occupation measure of $W_t$; see Definition~\ref{DEF:occup} and Example~\ref{EX:noreg}~(i) below for a comparison between these results and our abstract Theorem~\ref{THM:noreg} below. Very recently, Tanaka \cite{Tanaka} studied the modulated NLS equation on tori $\M=\T^d$. See also \cite{CGr} for a study of the modulated NLS equation on quantum graphs, and the very recent preprint \cite{Goel} discussing Strichartz estimates for the Schr\"odinger equation with both a potential and white noise dispersion.

Numerical simulations of the modulated NLS equation \eqref{EQ} with Brownian modulation \cite{BDBD,Maierhofer,Marty1,Marty2} suggest that a stronger regularization by noise phenomenon should occur in this case. Namely, it seems like the $1d$ NLS equation with white noise dispersion is well-posed in $L^2(\R)$ for mass-supercritical power nonlinearities $|u|^{2\sigma}u$, $\sigma \in (2;4]$. However, in the case of the periodic quintic NLS equation \eqref{EQ} with Brownian modulation, Stewart \cite{Stewart} showed that the same semilinear ill-posedness in $L^2(\T)$ as in the deterministic case \eqref{EQ:deterministic} by Kishimoto \cite{Kishimoto} also holds. This indicates that the regularization by noise phenomenon numerically conjectured in \cite{BDBD,Maierhofer,Marty1,Marty2} is not systematic. A strong regularization by noise is observed by Chouk, Gubinelli, Li, Li, and Oh in \cite{CG2} for KdV, Benjamin-Ono and the intermediate long wave equation. They showed well-posedness at \emph{any} regularity provided that $W_t$ is a suitably \emph{irregular} deterministic modulation. We generalize and extend this result in \cite{ModulatedNonRes} by showing that this phenomenon is typical in \emph{strongly non-resonant} dispersive PDEs. Thus these results are in sharp contrast with the counter-example provided by Stewart that we discussed above, and raise the question of observing such a phenomenon for models which are \emph{not} strongly non-resonant, such as the quintic periodic NLS equation studied in \cite{Stewart}. We give such examples in Theorem~\ref{THM:NLSFL} below and in \cite{ModulatedKP}; see Remark~\ref{REM:reg} below. We point out that as a by-product of the proof of Theorem~\ref{THM:noreg} below, we show that such regularization by noise can only come from \emph{multilinear} estimates, since linear space-time estimates à la Strichartz for the modulated semi-group are bounded above and below by that of the deterministic semi-group; see Remark~\ref{REM:sharp}. We refer to the introduction of \cite{ModulatedNonRes} and references therein for a further discussion and history on regularization by noise phenomena for ODEs and PDEs, in particular dispersive PDEs; see also \cite{JEDP} for an overview of these results.

\subsection{Assumptions}
In the following, we will use different assumptions on the irregularity of $W_t$, expressed through the regularity of its occupation measure $\mu_{[0;T]}$ measured in some appropriate topology.\\~~\\

\begin{definition}\label{DEF:occup}
\rm ~~
\begin{itemize}
\item Let $W:\R\to \R$ be a continuous path. For all time interval $I\subset \R$, its \textbf{occupation measure} is defined as the measure on Borel sets $A\in\mathcal{B}(\R)$
\begin{align}\label{mu}
\mu_I(A):=\mathrm{Leb}\big(t\in I,~W_t\in A\big)=\int_I\mathbf{1}_A(W_t)dt.
\end{align}
\item In the following we assume that $\mu_I\ll dz$ for all interval $I\subset \R$. Then ${\displaystyle\frac{d\mu_I}{dz}}$ is the \textbf{local time} of $W_t$ in $I$.
\end{itemize}
\end{definition}

The relation between regularity properties of a deterministic or stochastic path $W_t$ and its occupation measure $\mu$ have been investigated since the 70s, and we refer to the review \cite{Review} by Geman and Horowitz on these earlier results. Different topologies have been used to measure the regularity of $\mu$: while H\"older regularity is discussed in \cite{Review}, more recently the use of Fourier-Lebesgue regularity has been proved usefull by Catellier and Gubinelli \cite{CaG} in the context of ODEs under rough additive perturbations $W_t$, or Chouk-Gubinelli \cite{CG1}, also with Li, Li and Oh  \cite{CG2} in the context of modulated dispersive PDEs. Even more recently, the work of Romito and Tolomeo \cite{RT} studies Besov regularity of $\mu$ under similar assumptions as in \cite{Review} on $W_t$. The common feature of these works is that \emph{regularity} of $\mu$ implies local \emph{irregularity} of $W_t$. Actually, the notion of prevalence allows to show some form of the converse, namely paths $W_t$ in some H\"older spaces satisfy ``a.e.'' a regularity property for their occupation measures; see the recent works \cite{GaG1,GaG2} by Galeati and Gubinelli where this is investigated.

 Below, we will assume either H\"older or Fourier-Lebesgue regularity à la Catellier-Gubinelli for the occupation measure $\mu$ of the path $W_t$ in \eqref{EQ}. Therefore we will make one of the following assumptions on the irregularity of $W_t$:

\begin{assumption}{\textbf{(A0)}}\label{A0}
\rm $W_t$ is a continuous function with local time $$\frac{d\mu_{[0;\cdot]}}{dz}(\cdot)\in L^\infty_{\textrm{loc}}(\R\times\R).$$
 \end{assumption}
\begin{assumption}{\textbf{(A0)*}}\label{A0*}
\rm $W_t$ is a continuous function with local time $$\frac{d\mu_{[0;\cdot]}}{dz}(\cdot)\in C^0_t\big(\R;L^\infty_{\textrm{loc}}(\R)\big).$$
\end{assumption}
\begin{assumption}{\textbf{(A0)$_{(\rho,\gamma)}$}}\label{A0gr}
\rm $W_t$ is a continuous function with local time $$\frac{d\mu_{[0;\cdot]}}{dz}(\cdot)\in C^\gamma_t\big(\R;\F L^{\rho,\infty}(\R)\big).$$
\end{assumption}
Before stating the assumptions on the model \eqref{EQ}, let us give some comments on the set of assumptions above regarding the regularity of the occupation measure $\mu$ of $W_t$.
\begin{remark}\label{REM:A0}
\rm~~\\
\textup{(i)} The definition of the Fourier-Lebesgue space $\F L^{\rho,\infty}(\R)$ appearing in assumption \ref{A0gr} above is recalled in Section~\ref{SEC:Prelim} below.\\
\textup{(ii)} The assumption \ref{A0gr} corresponds to the definition of $(\rho,\gamma)$-irregularity introduced in \cite{CaG} and also used in \cite{CG1,CG2}.\\
\textup{(iii)} From the standard embedding $\F L^{\rho,\infty}(\R) \hookrightarrow L^\infty(\R)$ when $\rho>1$, we have the following implications between the different assumptions above: $$\Big(\textrm{\ref{A0gr}},~~\gamma\ge 0,~~\rho>1\Big)\Longrightarrow\textrm{\ref{A0*}}\Longrightarrow\textrm{\ref{A0}}.$$
\textup{(iv)} When $W_t=t$, one has ${\frac{d\mu_{[0;t]}}{dz}(z)=\mathbf{1}_{[0;t]}(z)}$, which clearly satisfies \ref{A0} but not \ref{A0*}. Moreover, from $$\Big|\widehat{\frac{d\mu_{[0;t]}}{dz}}(\xi)\Big|=\Big|\frac{e^{it\xi}-1}{i\xi}\Big| \les |t|^\gamma|\xi|^{1-\gamma}$$ for any $\gamma\in [0;1]$, one finds that $W_t=t$ also satisfies \ref{A0gr} if and only if $\gamma\in [0;1]$ and $\rho\le 1-\gamma$. The same holds for more regular one-to-one maps, see \cite[Proposition 1.4]{CG1}.\\
\textup{(v)} Similarly, when $\frac{dW_t}{dt}$ is $T$-periodic and piecewise constant with a finite number of discontinuities, i.e. ${\displaystyle \frac{dW_t}{dt}=\sum_{k=1}^{K}\mathbf{1}_{(t_{k-1};t_k)}\dot{w}_{k-1}}$ for some partition $\{t_k\}_{k=0}^K$ of $[0;T]$, provided that $\dot{w}_k\neq 0$ for all $k$, one finds for $t\in [0;T]$ that ${\displaystyle \frac{d\mu_{[0;t]}}{dz}(z)=\sum_{k=1}^K\mathbf{1}_{[w_{k-1};w_{k-1}+\dot{w}_{k-1}(t_k-t_{k-1}))\cap[0;t]}(z)}$ for some $w_k\in\R$. Thus $W_t$ also satisfies \textbf{(A0)} in this case.\\
\textup{(vi)} On the contrary, it follows from \cite{Review} that as soon as \ref{A0*} is satisfied, $W_t$ is not locally Lipschitz continuous at \emph{any} $t$. Similarly, for any $\eta\in(0;1)$, the assumption \ref{A0gr} for some $\rho>1$ and $\gamma>1-\eta$ implies that $W_t$ is not $\eta$-H\"older continuous at any $t$.
\end{remark}

We now turn to the dispersive properties of equation \eqref{EQ}. We will make use of one of the following assumptions on the domain and the dispersion.

\begin{assumption}{\textbf{(A1)}}\label{A1}
\rm $\M$ is a connected abelian Lie group: $\M=\R^{d_1}\times \T^{d_2}$ for some $d_1,d_2\in\N$, and $\widehat{\M}=\R^{d_1}\times\Z^{d_2}$ denotes its Pontryagin dual. Then $\L$ is a skew-adjoint operator given by a Fourier multiplier $\varphi\in C^1(\widehat{\M};\R)$ such that for any $u\in \S(\M)$ and $\xi\in\widehat{\M}$, $\widehat{\L u}(\xi)=i\varphi(\xi)\widehat{u}(\xi)$.
\end{assumption}
\begin{assumption}{\textbf{(A1)*}}\label{A1*}
\rm $\M$ is a closed Riemannian manifold, $\L$ is a skew-adjoint operator with compact resolvent and discrete spectrum $\{i\lambda_j\}_{j\in\N}$, $\lambda_j\in\R$, $|\lambda_j|\to \infty$, and associated orthonormal basis of eigenfunctions $\{\psi_j\}_{j\in\N}$ of $L^2(\M)$. Then $\widehat{\M}=\N$, and for any $u\in \S(\M)$ and $\xi\in\N$, $\widehat{u}(\xi)=\langle u,\psi_\xi\rangle_{L^2}$ and $\widehat{\L u}(\xi)=i\lambda_\xi\widehat{u}(\xi)$.
\end{assumption}

We will also eventually use a further assumption regarding the conservation laws associated with \eqref{EQ:deterministic}.

\begin{assumption}{\textbf{(A2)}}\label{A2}
\rm The mass $u\mapsto \|u\|_{L^2}^2$ is a conservation law for \eqref{EQ:deterministic}. 
\end{assumption}
\begin{remark}\label{REM:conservation}
\rm
Even if the corresponding deterministic equation \eqref{EQ:deterministic} is Hamiltonian with a conserved energy, the latter is not conserved anymore for \eqref{EQ}. This is why we are only concerned with globalisation using invariance of the $L^2$ norm, hence assumption \ref{A2}.
\end{remark}

\subsection{Statement of the results}
Equipped with the set of assumptions from the previous subsection, we can now state our results on the Cauchy problem for the abstract modulated equation \eqref{EQ}.~~\\
\subsubsection{\textbf{Well-posedness through space-time estimates}}~~\\
Informally, our first result shows that for any semilinear dispersive PDE treated by a fixed-point argument in appropriate space-time norms, the same well-posedness results hold for both \eqref{EQ} and
\begin{align}\label{EQ:deterministic}
\begin{cases}
\dt u +\L u +\NN(u)=0,\\
u(t=0)=u_0.
\end{cases}
\end{align} 

To state our result more precisely, we use the following notations: for a set of spatial functions $X(\M)$ and $T>0$, we define the sets of space-time functions
\begin{align}\label{Spq}
S^{p,+}_T(X):= L^p([0;T];X(\M))\qquad\text{and}\qquad S^{p,-}_T(X)= X(\M;L^p([0;T])).
\end{align}
For a finite family of spaces $\{S^{p_j,\iota_j}(X_j)\}_j$ and a regularity exponent $s\in\R$, we define for $T>0$ the Strichartz/local smoothing type space
$$\Sb^s_T:=C([0;T];H^s(\M))\bigcap_j \jb{D}^{-s}S^{p_j,\iota_j}_T(X_j),$$
endowed with the norm
\begin{align}\label{Sb}
\|u\|_{\Sb^s_T}:=\|u\|_{L^\infty_T H^s}+\max_{j}\big\|\jb{D}^su\big\|_{S^{p_j,\iota_j}_T(X_j)}.
\end{align} 

\begin{theorem}\label{THM:noreg}
Assume that either \ref{A1} or \ref{A1*} hold, and that the deterministic equation \eqref{EQ:deterministic} is locally well-posed in $H^s(\M)$ for some $s\in\R$ via (local in time) linear space-time (Strichartz/local smoothing) estimates. Namely, given $s\in\R$, there is a finite set $\mathcal{E}\subset (2;\infty)\times E\times\{\pm\}$ of admissible exponents and there are $n,C_0>0$ and a constant $\eta_{\EE}(s)\ge 0$ such that for any $(p,\zeta,\iota)\in\mathcal{E}$, there are spatial function spaces $X_{\zeta}$ such that for any $T\in(0;1]$, there is a constant $C_{p,\zeta,\iota}(T)>0$ non-decreasing in $T$ satisfying
\begin{align}\label{Strichartz}
\big\|e^{-t\L}u_0\big\|_{S^{p,\iota}_T(X_{\zeta})}\le C_{p,\zeta,\iota}(T)\|u_0\|_{L^2}
\end{align}
for any $u_0\in L^2(\M)$,
and moreover the nonlinear estimate
\begin{align}\label{Strichartznonlinear}
\Big|\int_0^T\int_\M \big(\NN(u)-\NN(v)\big)\cj{w}dxdt\Big|\le C_0T^{\eta_\EE(s)}\|u-v\|_{\Sb^s_T}\big(\|u\|_{\Sb^s_T}+\|v\|_{\Sb^s_T}\big)^n \|w\|_{\Sb^{-s}_T}
\end{align}
holds for any $T\in(0;1]$ and any $u,v\in \Sb^s_T(\M)$ and $w\in \Sb^{-s}_T(\M)$.
Then the following hold.\\
\textup{(i)} If \ref{A0} holds, the corresponding modulated equation \eqref{EQ} is also locally well-posed in $H^s(\M)$. More precisely, for any $u_0\in H^s(\M)$, there is $T=T(u_0)>0$ with $T=T(\|u_0\|_{H^s})$ in case $\eta_\EE(s)>0$, such that there is a unique mild solution $u\in C([0;T);H^s(\M))\cap X^s_T$ to \eqref{EQ} on $[0;T)$. Moreover, the flow map $u_0\in H^s(\M)\mapsto u\in C([0;T);H^s(\M))\cap X^s_T$ is locally Lipschitz continuous.\\
\textup{(ii)} If \ref{A2} holds, and either \ref{A0*}, or \ref{A0} with $\eta_\EE(0)>0$, also hold, then in case $s=0$ the solution extends globally in time for any initial data in $L^2(\M)$.
\end{theorem}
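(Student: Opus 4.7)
The plan is to reduce the modulated Cauchy problem to the deterministic one, by proving modulated versions of the linear estimates \eqref{Strichartz} via the occupation formula, and then invoking the hypothesis \eqref{Strichartznonlinear}---which is already phrased in a dualization-friendly form---to close a contraction on \eqref{Duhamel}. Concretely, the first task is to establish
\[\big\|e^{-(W_t-W_0)\L}u_0\big\|_{S^{p,\iota}_T(X_\zeta)}\le\widetilde C_{p,\zeta,\iota}(W,T)\|u_0\|_{L^2}\]
for every admissible triple $(p,\zeta,\iota)\in\EE$. The basic identity is the occupation formula
\[\int_0^T F(W_t-W_0)\,dt=\int_\R F(z)\,L_T(z+W_0)\,dz,\qquad L_T:=\frac{d\mu_{[0;T]}}{dz},\]
valid for any nonnegative measurable $F$. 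Under \ref{A0}, $L_T\in L^\infty(\R)$ is supported in the compact interval $J_T:=\{W_t-W_0:\,t\in[0;T]\}$ (finite by continuity of $W$). Applying the formula with $F(z)=\|e^{-z\L}u_0\|_{X_\zeta}^p$ in the case $\iota=+$ (and pointwise in $x$ combined with Fubini/Minkowski for $\iota=-$), then covering $J_T$ by unit intervals on each of which \eqref{Strichartz} applies, yields the claim with constant controlled by $\|L_T\|_{L^\infty}$ and $|J_T|$.

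For the nonlinear part of $\Phi(u)(t):=e^{-(W_t-W_0)\L}u_0-\int_0^t e^{-(W_t-W_\tau)\L}\NN(u(\tau))d\tau$, each component of the $\Sb^s_T$ norm is estimated by duality. For the $L^\infty_T H^s$ component, fixing $\bar t\in[0;T]$ and $h\in H^{-s}$ with unit norm, skew-adjointness of $\L$ gives
\[\Big\langle(\Phi_{NL}(u)-\Phi_{NL}(v))(\bar t),h\Big\rangle_{H^s,H^{-s}} = \int_0^{\bar t}\!\!\int_\M(\NN(u)-\NN(v))(\tau)\,\cj{w_{\bar t,h}(\tau,x)}\,dx\,d\tau,\]
with $w_{\bar t,h}(\tau,x):=e^{-(W_\tau-W_{\bar t})\L}h(x)$. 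The first step applied to the translated path $W_\cdot-W_{\bar t}$ (whose local time coincides with that of $W$ up to translation) produces $\|w_{\bar t,h}\|_{\Sb^{-s}_{\bar t}}\les C(W,T)\|h\|_{H^{-s}}$, and \eqref{Strichartznonlinear} applied on $[0;\bar t]$ then bounds the pairing by $C_0\bar t^{\eta_\EE(s)}\|u-v\|_{\Sb^s_{\bar t}}(\|u\|_{\Sb^s_{\bar t}}+\|v\|_{\Sb^s_{\bar t}})^n$; taking suprema in $\bar t$ and $h$ delivers the $L^\infty_T H^s$ estimate. The Strichartz/local-smoothing components of $\Sb^s_T$ are treated in the same spirit: one dualizes against an element of the dual of $\jb{D}^{-s}S^{p_j,\iota_j}_T(X_j)$ and constructs a test function $w\in\Sb^{-s}_T$ via the modulated propagator, using Christ-Kiselev to handle the time retardation together with the first step to control its norm. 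Combined with the bound on the homogeneous part of $\Phi$ coming from the first step, this furnishes the contractive estimate for $\Phi$ on a small ball of $\Sb^s_T$, so a standard Picard iteration produces the unique solution with locally Lipschitz flow, proving (i).

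For (ii), skew-adjointness of $\L$ together with the $L^2$-conservation of $\NN$ from \ref{A2} yield $\|u(t)\|_{L^2}\equiv\|u_0\|_{L^2}$ throughout the lifespan of the $L^2$ solution, so as soon as the local existence time depends only on $\|u_0\|_{L^2}$ one can iterate (i) to reach any finite horizon. This is automatic when $\eta_\EE(0)>0$; when $\eta_\EE(0)=0$ the assumption \ref{A0*} makes $T\mapsto\|L_T\|_{L^\infty}$ continuous, hence locally bounded, so the Strichartz constant from the first step is uniform on compact sub-intervals. I expect the main obstacle to lie in the second step: the hypothesis \eqref{Strichartznonlinear} requires the test function to lie in the strong intersection space $\Sb^{-s}_T$, not merely in one piece of its dual, and producing such a test function for the Strichartz/local-smoothing components requires both the modulated linear bound from the first step and a careful handling of the time retardation in the Duhamel integral.
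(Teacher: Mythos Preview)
Your first step---transferring the homogeneous estimate \eqref{Strichartz} to the modulated group via the occupation formula---is exactly what the paper does and is correct.

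The paper departs from your approach at the second step. Rather than running the contraction in $\Sb^s_T$ and estimating each Strichartz/smoothing component of the Duhamel integral separately by duality plus Christ--Kiselev, it works in the adapted space $X^s_T=U^2_{W\L}([0;T))H^s$. By Proposition~\ref{PROP:UpVp}~(iii), bounding the Duhamel term in $X^s_T$ reduces to testing against $w\in Y^{-s}_T=V^2_{W\L}H^{-s}$; since $p_\EE>2$ (this is precisely where the hypothesis $\EE\subset(2;\infty)\times E\times\{\pm\}$ enters), the embedding $V^2\subset U^{p_\EE}$ together with the transference principle~\eqref{transference} gives $\|w\|_{\Sb^{-s}_T}\le C_W(T)\|w\|_{Y^{-s}_T}$ directly, so the test function lands automatically in the intersection space $\Sb^{-s}_T$ and \eqref{Strichartznonlinear} applies with no retardation issue at all. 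Your route instead needs to manufacture $w\in\Sb^{-s}_T$ as a Duhamel integral of a generic element in the dual of a single Strichartz piece, which requires the full \emph{inhomogeneous} modulated Strichartz estimate. For $\iota=+$ this is plausible via Christ--Kiselev (all $p>2$), but for $\iota=-$ the time variable sits inside the spatial norm and the standard Christ--Kiselev lemma does not apply; you acknowledge this obstacle but do not resolve it. The $U^2/V^2$ machinery sidesteps it entirely and also places the solution in $X^s_T$, which is the class asserted in the statement.

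Your argument for (ii) under \ref{A0*} misidentifies the mechanism. Local boundedness of $\|L_T\|_{L^\infty}$ is already contained in \ref{A0}; what \ref{A0*} adds is continuity of $T\mapsto\frac{d\mu_{[0;T]}}{dz}$ in $L^\infty$, and hence $C_W(T)\to 0$ as $T\searrow 0$. It is this \emph{smallness} that replaces the missing factor $T^{\eta_\EE(0)}$ in the contraction estimate when $\eta_\EE(0)=0$, forcing the local existence time to depend only on $\|u_0\|_{L^2}$ even in the critical case; combined with $L^2$-conservation this gives global existence by iteration.
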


Before giving some examples of nonlinear dispersive PDEs satisfying the assumptions in Theorem~\ref{THM:noreg}, we first make some comments on these results.

\begin{remark}\label{REM:A0*}
\rm~~\\
\textup{(i)} Clearly, \eqref{Strichartz}-\eqref{Strichartznonlinear} allow to run a fixed point argument on the Duhamel formulation of \eqref{EQ:deterministic} in a suitable ball of $\Sb^s_T$ for $T>0$ small enough depending on $u_0\in H^s(\M)$, thus showing that indeed the deterministic equation \eqref{EQ:deterministic} is well-posed in $H^s(\M)$.\\
\textup{(ii)} Only \ref{A0} is assumed in Theorem~\ref{THM:noreg}~(i). In particular, this contains the case $W_t=t$ as pointed out in Remark~\ref{REM:A0}~(ii). Of course Theorem~\ref{THM:noreg}~(i) is a tautology in this case due to the previous remark. This also covers the case of periodic and piece-wise constant $\frac{dW_t}{dt}$, see Remark~\ref{REM:A0}~(v), and the case of rougher $W_t$ in the sense of having a more regular local time. This means that Theorem~\ref{THM:noreg} does not distinguish between $W_t=t$ and more irregular $W_t$, in sharp contrast to Theorems~\ref{THM:NLScrit} and particularly~\ref{THM:NLSFL} below. Note that this is however in some sense optimal: see Remark~\ref{REM:sharp} below.\\
\textup{(iii)} As pointed out in Remark~\ref{REM:A0}~(i), \ref{A0*} is less restrictive than \ref{A0}$_{(\rho,\gamma)}$ for $\rho>1$. For example, if $W_t$ is a sample path of a fractional Brownian motion with Hurst parameter $H\in (0;1)$, then \ref{A0}$_{(\rho,\gamma)}$ holds a.s. only for $\gamma\in[\frac12;1]$ and $\rho<\frac{1-\gamma}{H}$ (see \cite[Theorem 1.4]{CaG}), while \ref{A0*} holds for any $H\in(0;1)$, see \cite[Theorem 8.1]{Berman73} or \cite{Xiao}.\\
\textup{(iv)} From Remark~\ref{REM:A0}~(iii) and~(iv) we see that the assumptions on $W_t$ in Theorem~\ref{THM:noreg}~(i) and~(ii) are in some sense optimal.
\end{remark}
\begin{remark}
\rm
Theorem~\ref{THM:noreg}~(ii) can be seen as a \textit{noiseless regularization by noise} phenomenon. Indeed this result shows that due to the irregularity of $W_t$ in \eqref{EQ}, expressed through the regularity assumption \ref{A0*} on its occupation measure, we have that even for focusing mass-critical equations, one can get \textit{large data} global well-posedness. This of course is false in general for the deterministic equation \eqref{EQ:deterministic}, and thus the assumption \ref{A0*} in Theorem~\ref{THM:noreg}~(ii) is minimal to see this regularization effect. This phenomenon was first observed in \cite{DT} then \cite{CG1,DR} in the context of the quintic NLS equation on $\M=\R$ with (fractional) Brownian motion $W_t$. Theorem~\ref{THM:noreg}~(ii) generalizes this observation to other mass-critical equations and to rough but deterministic modulations. See Examples~\ref{EX:noreg} below for examples of dispersive PDEs where Theorem~\ref{THM:noreg}~(i) and~(ii) apply. Similar regularization by noise effects, namely lack of blow-up under stochastic perturbations of dispersive PDEs, also appear for some models with multiplicative noise; see e.g. \cite{BRZ,HRSZ,HRZ} and Remark~\ref{REM:CW} below.

Note at last that the special role of $s=0$ in Theorem~\ref{THM:noreg}~(ii) is due to the fact that apart from the $L^2$ norm, we do not expect other conservation laws for \eqref{EQ} because of the time-dependent coefficient in front of the dispersion, as pointed out in Remark~\ref{REM:conservation}.
\end{remark}

\begin{remark}
\rm
The proof of Theorem~\ref{THM:noreg} relies on the transfer of the linear space-time estimate \eqref{Strichartz} from the equation \eqref{EQ:deterministic} to the modulated equation \eqref{EQ}, in the same spirit as in e.g. \cite{Zhang}. Note that under \ref{A0}, $W_t$ can be very rough. In particular, as far as Schr\"odinger equations with non constant rough \emph{spatial} coefficients are concerned, it is known that there is a threshold of regularity for the spatial coefficients, below which Strichartz estimates come with a loss of derivative \cite{BP,FS,ST}; see also the recent works \cite{DM,DW,GUZ,MZ,TV} where perturbations of the Laplacian by rough random potentials are considered. It is interesting to note that \emph{time} coefficients are allowed to be much more singular than what is possible for \emph{spatial} coefficients, and that actually their roughness \emph{improves} the behaviour of (local) space-time estimates; see \eqref{StrichartzW}-\eqref{CW2}-\eqref{CW3} and Remark~\ref{REM:CW} below.
\end{remark}

\begin{remark}
\rm
The local solution space $X^s_T$ in Theorem~\ref{THM:noreg}~(i) is a $U^2$-type space adapted to the modulated linear evolution $e^{-W_t\L}$; see Subsection~\ref{SUBS:space} below for definitions and properties of these spaces. It replaces the space $D^W(H^s)$ used in \cite{CG1,CG2}. In particular the duality relation of Proposition~\ref{PROP:UpVp}~(iii) below can be seen as the endpoint case $\gamma=\frac12$ of the nonlinear Young integral theory of \cite[Theorem 2.3]{CG1} for $f\in C^\gamma_t\textrm{Lip}_M(H^s)$ and $g\in C^\gamma_tH^s$; see also \cite{CG2,GaReview} for further results based on nonlinear Young integration.

These spaces are nowadays heavily used to study the Cauchy problem for nonlinear dispersive PDEs at scaling-critical regularity. Here, even at sub-critical regularity, we crucially exploit their atomic structure, which is particularly compatible with assumptions \ref{A0}, \ref{A0*} or \ref{A0gr} on the occupation measure \eqref{mu}. On the contrary, it is not clear at all how one could use the standard $X^{s,b}$ spaces à la Bourgain when $W_t\not\equiv t$, due to a lack of informations on the Fourier transform of ${\displaystyle \frac{d\mu_{[0;t]}}{dz}}$ with respect to $t$. This choice of spaces also allows to track more efficiently the dependence on the time interval. Though this does not play such a big role in this work, it is crucial when performing a short-time analysis as in \cite{ModulatedNonRes,ModulatedKP}.
\end{remark}

\begin{remark}
\rm
It is also possible to extend the admissible space-time estimates \eqref{Strichartz} of Theorem~\ref{THM:noreg} to an anisotropic setting. This would allow for example to take into account the local smoothing estimate for the Schrödinger equation in $\R^d$ when $d\ge 2$.
\end{remark}

We give below some examples of semilinear dispersive PDEs of the form \eqref{EQ:deterministic} satisfying the assumptions of Theorem~\ref{THM:noreg} and we refer to Subsection~\ref{SUBS:Example1} below for further discussion on the validity of these assumptions for each model.

\begin{example}\label{EX:noreg}
\rm~~

\begin{enumerate}
\item \textit{\textbf{The modulated nonlinear Schr\"odinger (NLS) equation on $\M=\R^d$:}}\\
Our first example is the modulated NLS equation
\begin{align}\label{NLS}
i\dt u -\frac{dW}{dt}\Delta u \pm |u|^{2m}u=0
\end{align}
for\footnote{The fact that we treat a nonlinearity $|u|^pu$ with $p$ restricted to be an even integer is not necessary and only made to simplify the presentation, but the same arguments as in \cite{Caz} allow to treat a more general nonlinearity $f(u)$ with suitable polynomial growth and $f(0)=0$.} some $m\in\N^*$. In this case, since the deterministic NLS equation is locally well-posed in $H^s(\R^d)$ in the whole (sub-) critical range $s\ge \max(s_c,0)$, where $s_c=\frac{d}{2}-\frac1m$, via Strichartz estimates \eqref{Strichartz} with $S^{p,+}_T(X)=L^p_TL^q_x$, Theorem~\ref{THM:noreg} shows local well-posedness of the modulated NLS equation \eqref{EQ:deterministic} also in the whole (sub-) critical range $s\ge \max(s_c,0)$. This recovers and improves the results of \cite{DBD,DT,CG1,DR,GaThesis}. Indeed, compared to \cite{DBD,DT,DR}, here the assumption on $W_t$ is completely non-random, similarly as what is explained by Galeati \cite[Theorem 5.37]{GaThesis} regarding the result of Duboscq and Réveillac \cite{DR}. The assumption \ref{A0} allows to relax the assumptions in \cite{DBD,DT,DR,GaThesis} treating the case of (fractional) Brownian modulations\footnote{In particular, as mentioned in Remark~\ref{REM:A0*}, our results hold for a.e. path $W_t$ of a fractional Brownian motion of any Hurst parameter $H\in(0;1)$.}, and the assumptions of Chouk and Gubinelli in \cite[Theorems 1.9.1\& 1.10]{CG1} regarding the cases $d=1,m=1,2$, and $d=2,m=1$. Moreover, Theorem~\ref{THM:noreg} improves the result of \cite[Theorem 1.9.1]{CG1} regarding the case $d=2,m=1$ to well-posedness in the whole (sub-) critical range $s\ge 0$.

At last, when $-\Delta$ is perturbed by a suitable potential as in \cite{Goldberg,Goel}, then (the proof of) Theorem~\ref{THM:noreg} recovers the results of \cite{Goel} establishing the Strichartz estimate for the modulated equation from the one of \cite{Goldberg} for the standard Schr\"odinger equation with potential.\\
\item \textit{\textbf{The modulated NLS equation on $\M=\T^d$:}}\\
We still consider \eqref{NLS}, but with spatial domain $\M=\T^d$. This satisfies again assumptions \ref{A1} and \ref{A2}. In this case, under assumption \ref{A0}, Theorem~\ref{THM:noreg} recovers the sub-critical well-posedness of the periodic NLS equation \eqref{NLS} in $H^s(\T^d)$, $s>\max(s_c,0)$. In \cite{Tanaka}, Tanaka studies the same model, but under assumption \ref{A0gr} for some $\gamma,\rho\in(0;1]$, and shows well-posedness in $H^s(\T^d)$ for $s>s(\rho)=\max(\frac{d}2-\frac{\rho}m;s_c)$. Note in particular that $s(\rho)>s_c$ if and only if $\rho<1$. Since \ref{A0gr} implies \ref{A0} when $\rho>1$ as mentioned in Remark~\ref{REM:A0}~(iii), Theorem~\ref{THM:noreg} thus recovers and improve the result of Tanaka in this regime. In case $0<\rho\le 1$, assumptions \ref{A0gr} and \ref{A0} are not comparable anymore, but in the particular case where $W_t$ is a $H$-fBm, as mentioned in Remark~\ref{REM:A0*}~(iii), \ref{A0} is satisfied for any $H\in(0;1)$, and thus our Theorem~\ref{THM:noreg} improves the result of Tanaka by treating the full sub-critical range of regularity.

Under only the minimal assumption \ref{A0}, our proof misses\footnote{Actually, our proof shows well-posedness when $s=0$ and $d=m=1$ but only for \emph{small} initial data, even though the $1d$ cubic NLS is mass-subcritical.} the case $s=0$ when $d=m=1$. This is recovered if one assumes the slightly stronger assumption \ref{A0*}, see Remark~\ref{REM:cubicNLS} after the proof of Theorem~\ref{THM:noreg}. In the particular case $d=1,m=2$, Theorem~\ref{THM:noreg} also extends the result of Stewart \cite[Theorem 1]{Stewart} by treating the case of deterministic modulations, including the case of a.e. paths of Brownian motion. In particular, this result is sharp in terms of range of regularity $s$ for the periodic quintic NLS in view of the counter-example provided by Stewart.

As for the scaling-critical case $s=\max(s_c,0)$, the existing scaling-critical theory for the deterministic NLS equation on tori relies on multilinear Fourier analysis refining the simpler use of linear Strichartz estimates, and is thus not covered by Theorem~\ref{THM:noreg}. See Theorems~\ref{THM:NLScrit} and~\ref{THM:NLSFL} below where we address this issue. \\

\item \textit{\textbf{The modulated NLS equation on closed Riemannian manifolds:}}\\
We now consider the case where $\M$ is a closed (smooth, compact, boundaryless) $d$-dimensional Riemannian manifold. Then \eqref{NLS} satisfies assumptions \ref{A1*} and \ref{A2}. Under \ref{A0}, Theorem~\ref{THM:noreg} provides local well-posedness of \eqref{NLS} in $H^s(\M)$ for $s>\frac{d}{2}-\frac1{2m}$ similarly as in \cite{BGT1,ST} for the deterministic NLS equation on $\M$. Note that improvement on this range of regularity requires global informations on the geometry of $\M$, see e.g. \cite{BGT2,BGT3,Herr}, together with suitable multilinear Fourier type analysis going beyond the assumptions of Theorem~\ref{THM:noreg}. See Theorem~\ref{THM:NLScrit} and Remark~\ref{REM:NLSsphere} below.\\

\item \textit{\textbf{The modulated quintic generalized Korteweg-de Vries (gKdV) equation on $\M=\R$}}\\
Our last example is the modulated quintic gKdV equation
\begin{align}\label{gKdV}
\dt u + \frac{dW}{dt}\dx^3 u \pm u^4\dx u = 0.
\end{align} 
In this case $\L = \dx^3$, $\NN(u)=\pm\frac1{5}\dx(u^{5})$ and $\M=\R$. This satisfies the assumptions \ref{A1}-\ref{A2}. The corresponding deterministic equation \eqref{EQ:deterministic} is mass-critical and locally well-posed in $H^s(\R)$ for any $s\ge 0$, and globally for small enough initial data \cite{KPV}, while finite-time blow-up occurs for large enough initial data in the focusing case \cite{Merle}. Thus under assumption \ref{A0} we also get local well-posedness in $H^s(\R)$, $s\ge 0$, for the modulated quintic gKdV equation \eqref{gKdV}. Moreover, under \ref{A0*}, this gives another example of mass-critical equation where we see large data global well-posedness for the modulated equation.
\end{enumerate}
\end{example}
~~\\

\subsubsection{\textbf{Further results for the NLS equation with noisy dispersion}}~~\\

Theorem~\ref{THM:noreg} above is only stated in a setting where the deterministic equation \eqref{EQ:deterministic} is well-posed through \emph{linear} space-time estimates. Thus we can wonder if equations whose well-posedness relies on \emph{multilinear} refinements of space-time estimates (e.g. bilinear Strichartz estimates) also enjoy the transfer of multilinear estimates from the deterministic equation \eqref{EQ:deterministic} to the noisy one \eqref{EQ}. This is the purpose of our next result, specialized to the case of the modulated NLS equation on tori at scaling critical regularity.

\begin{theorem}\label{THM:NLScrit}
Assume that $\M=\T^d$ for some $d\ge 1$, $\L=i\Delta$, $\K=\C$, and $\NN(u)=|u|^{2m}u$ for some $m\in\N^*$.
Then, if $m>\frac2d$, and \ref{A0gr} holds for some $\gamma>\frac12$ and $\rho>1$, the modulated periodic NLS \eqref{EQ} is locally well-posed in $H^s(\T^d)$ for any $s\ge s_c=\frac{d}2-\frac1m$. More precisely, for any $R>0$, there exists $T(R)\sim \jb{R}^{-\frac{2m}\gamma}>0$ such that for any $u_0\in H^s(\T^d)$ with $\|u_0\|_{H^{s}}\le R$, there exists a unique mild solution $u\in C([0;T);H^s(\T^d))\cap X^s_T$ to \eqref{EQ} on $[0;T)$. Moreover, the flow map $u_0 \in H^s(\T^d)\mapsto u\in C([0;T);H^s(\T^d))\cap X^s_T$ is locally Lipschitz continuous.
\end{theorem}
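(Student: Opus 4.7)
The plan is to run a fixed-point argument for the Duhamel formulation \eqref{Duhamel} in a ball of the modulated $U^2$-type critical space $X^s_T$ adapted to the propagator $e^{-W_t\L}=e^{-iW_t\Delta}$ constructed in Subsection~\ref{SUBS:space}; this space plays the role Bourgain's $X^{s,b}$ plays in the classical critical theory on tori, but is more flexible because it does not require any Fourier information on $W_t$ itself. By the atomic structure of $U^2_W$ and the duality recalled in Proposition~\ref{PROP:UpVp}~(iii), the whole analysis reduces to a multilinear estimate of the form
\begin{equation*}
\Big|\int_0^T\!\!\int_{\T^d}\big(|u|^{2m}u-|v|^{2m}v\big)\cj w\,dx\,dt\Big|\les T^{\gamma}\|u-v\|_{X^s_T}\big(\|u\|_{X^s_T}+\|v\|_{X^s_T}\big)^{2m}\|w\|_{X^{-s}_T},
\end{equation*}
at the scaling-critical regularity $s=s_c=\tfrac d2-\tfrac1m$. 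The extra $T^\gamma$ factor, which has no counterpart in the deterministic critical theory, is exactly what will allow the contraction to close with a size-dependent time $T(R)\sim\jb{R}^{-2m/\gamma}$.

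By the atomic structure one may assume, up to a Littlewood--Paley decomposition, that each of the $2m+2$ input functions is a single modulated linear evolution $e^{-iW_t\Delta}f_{N_j}$ with $f_{N_j}$ spectrally localized on the dyadic annulus $\{|n|\sim N_j\}\subset\Z^d$. Expanding each factor in Fourier series on $\T^d$, the spatial integration selects the $(2m+2)$-tuples $\vec n=(n_1,\dots,n_{2m+1},n)$ satisfying $n_1-n_2+\cdots\pm n_{2m+1}=n$, and the time variable only appears through the oscillatory factor $e^{iW_t\Phi(\vec n)}$ attached to the NLS resonance function
\begin{equation*}
\Phi(\vec n)=|n_1|^2-|n_2|^2+\cdots\pm|n_{2m+1}|^2-|n|^2.
\end{equation*}
By the very definition of the occupation measure,
\begin{equation*}
\int_0^T e^{iW_t\Phi(\vec n)}\,dt=\F\Big(\frac{d\mu_{[0;T]}}{dz}\Big)(-\Phi(\vec n)),
\end{equation*}
and since the local time starts from $L_0\equiv 0$, the $(\rho,\gamma)$-irregularity assumption \ref{A0gr} yields the key multiplier bound
\begin{equation*}
\Big|\int_0^T e^{iW_t\Phi(\vec n)}\,dt\Big|\les T^\gamma\,\jb{\Phi(\vec n)}^{-\rho}.
\end{equation*}

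Thus every summand of the multilinear sum is reweighted by a factor $\jb{\Phi(\vec n)}^{-\rho}$; since $\rho>1$ this weight is summable over integer levels of $\Phi$ and plays exactly the role of the $\jb{\Phi}^{-1}$ resolvent weight one would obtain from Bourgain's $X^{s,b}$ machinery in the $W_t=t$ case. It then remains to execute the frequency summation: grouping the tuples by the level sets $\{|\Phi|\sim L\}$ and by dyadic maximal frequencies $N_1\ge\cdots\ge N_{2m+1}$, one combines Cauchy--Schwarz on the free frequencies with the standard lattice counting / divisor bounds for $\Phi$ on $\Z^d$ (the same ingredients that underlie the Bourgain periodic Strichartz estimates), summing in $L$ using $\rho>1$ and recovering the scaling-critical exponent $s_c$ thanks to the extra $\gamma>\tfrac12$ time Hölder regularity. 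The main obstacle is precisely this counting step: the classical deterministic critical theory is tightly calibrated to the $\jb{\Phi}^{-1}$ gain, whereas here one must absorb derivative losses using a softer $\jb{\Phi}^{-\rho}$ weight traded against the $T^\gamma$ gain, which requires a careful accounting of all frequency interactions. Once this multilinear estimate is established, a standard Picard iteration in a ball of $X^{s_c}_T$ of radius $\sim R$ closes as soon as $T^\gamma R^{2m}\les 1$, i.e.\ $T\sim\jb{R}^{-2m/\gamma}$, which produces both the local solution and the local Lipschitz dependence of the flow map.
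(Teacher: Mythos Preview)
Your overall architecture is correct and matches the paper: reduce to atoms in $X^{s_c}_T$, use the occupation time formula together with \ref{A0gr} to convert the time integral into the multiplier $T^\gamma\jb{\Phi(\vec n)}^{-\rho}$, then close a contraction with $T\sim\jb{R}^{-2m/\gamma}$. Two points, however, are not adequate as written.

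First, the frequency summation step is where the whole difficulty lies, and ``Cauchy--Schwarz on the free frequencies with the standard lattice counting / divisor bounds for $\Phi$'' does not close at the \emph{critical} regularity $s_c$: divisor-type counting loses an $N^\eps$, which is exactly what one cannot afford. The paper does something much more specific. It converts the fixed-level sums $\sup_\nu\sum_{\Phi=\nu}(\cdots)$ back into $L^p(\T\times\T^d)$ Strichartz norms via Plancherel (Lemmas~\ref{LEM:multilinearT31} and~\ref{LEM:multilinearT32}), so that the full strength of the Bourgain--Demeter decoupling estimate \eqref{StrichartzLoc} is available, and then runs the Herr--Tataru--Tzvetkov almost-orthogonality machinery: decompose the two highest-frequency factors into cubes $\CC_\ell\in\CC(N_2)$ of the second-highest scale, and in the low-modulation regime $|\nu|\ll N_2^2$ further into strips $\RR_\alpha$ of width $(N_2^2/N_1)\vee 1$ orthogonal to the cube center, so that only $O(1)$ pairs $(\alpha,\beta)$ interact. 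Interpolating \eqref{StrichartzLoc2} with Bernstein on these strips produces the crucial off-diagonal gain $(N_2^{-1}+N_2/N_1)^\theta$, which is what permits the dyadic summation at $s=s_c$. None of this is captured by ``divisor bounds'', and without it the estimate of Proposition~\ref{PROP:multilinearT3} fails.

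Second, the dual factor should sit in $Y^{-s}_T$ (the $V^2$-based space), not $X^{-s}_T$; this is not cosmetic. One embeds $Y^{-s}_T\subset U^b_{iW\Delta}H^{-s}$ for some $b>2$ with $b'<\tfrac1\gamma$, and then the atom-sum $\sum_{k_0,\dots,k_{2m+1}}|I_{k_0,\dots,k_{2m+1}}|^\gamma$ is controlled in $\ell^2_{k_1,\dots,k_{2m+1}}\ell^{b'}_{k_0}$ by $T^\gamma$. This is precisely where the hypothesis $\gamma>\tfrac12$ enters; it has nothing to do with ``recovering the scaling-critical exponent'' in the frequency summation, contrary to what you write.
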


\begin{remark}
\rm
Note that the local time of existence in Theorem~\ref{THM:NLScrit} above only depends on the size of the initial data in $H^s(\T^d)$, even in the scaling-critical case $s=s_c$. This suggests that under \ref{A0gr} with $\gamma>\frac12$ and $\rho>1$, $H^{s_c}(\T^d)$ becomes sub-critical for the modulated periodic NLS equation \eqref{EQ}. 
\end{remark}

\begin{remark}
\rm 
The results of Theorem~\ref{THM:NLScrit} for the modulated periodic NLS \eqref{EQ} are similar to the ones for the usual periodic NLS \eqref{EQ:deterministic} in \cite{HTT,Wang}. From the restrictions on $(m,d)$, the only cases not covered by Theorem~\ref{THM:NLScrit} are: $d=1$, $m=1,2$, and $d=2$, $m=1$. The cases $d=1$, $m=2$ and $d=2$, $m=1$ correspond to the mass-critical cases. Due to the $\eps$ loss of derivatives in the Strichartz estimate \eqref{StrichartzLoc} in the endpoint case $p=\frac{2(d+2)}d$, well-posedness in $L^2(\T^d)$ in these cases for the unmodulated equation \eqref{EQ:deterministic} remains a challenging open problem; see \cite{HerrKwak} for a recent improvement on \eqref{StrichartzLoc} giving a sharp logarithmic loss in the case $d=2$, $p=4$. The same loss of derivatives cannot be overcome using the modulation in \eqref{EQ}; see again \cite{Stewart} for a counter-example in the case $d=1$, $m=2$, and the discussion in Remark~\ref{REM:sharp}. The case $d=m=1$ is treated in \cite[Theorem 1.8]{CG1} under less restrictive assumptions on $\mu$ than that of Theorem~\ref{THM:NLScrit}, due to the sub-critical nature of the $1d$ cubic NLS in $L^2(\T)$. See also Theorem~\ref{THM:NLSFL} below.

Theorem~\ref{THM:NLScrit} also extends the results of \cite{Tanaka} in the case $\rho>1$ which only covered the sub-critical range of regularity for the well-posedness, missing the critical one covered by Theorem~\ref{THM:NLScrit}.
\end{remark}

The result of Theorems~\ref{THM:noreg}~(i) and~\ref{THM:NLScrit} show that one has the same range of regularity exponent $s$ where well-posedness holds for both the deterministic equation \eqref{EQ:deterministic} and the modulated one \eqref{EQ}, including the scaling critical case $s=s_c$. However, in view of the discussion in the introduction, we expect that in some cases the rough modulation in \eqref{EQ} can actually improve the well-posedness theory compared to that of the deterministic equation \eqref{EQ:deterministic}. Note that this is by no mean systematic, since there are counter-examples showing that for some models, it is not possible to improve on the well-posedness theory of \eqref{EQ} compared to that of \eqref{EQ:deterministic}; again we point to the work of Stewart \cite{Stewart} on this point. The point of Theorem~\ref{THM:noreg} is to show that locally in time, the space space-time estimates hold for both the deterministic equation and the modulated one with same range of integrability exponents $(p,q)$. And this is sharp since we can also show that the space-time norm for the modulated dispersion is bounded below by that of the standard dispersion; see Remark~\ref{REM:sharp} below. This shows that the modulation cannot help to improve the range of admissible exponents $(p,q)$ in \eqref{Strichartz}. Thus any improvement on the well-posedness theory (in terms of larger spaces for the initial data) can only come from multilinear estimates in $X^{s,b}$-type norms. Our last result provides such an example.

\begin{theorem}\label{THM:NLSFL}
Let $\M=\T$, $\L=i\dx^2$, $\K=\C$, and ${\displaystyle \NN(u)=\pm\big(|u|^2-2\|u\|_{L^2}^2\big)u}$. Let $s\ge 0$ and $r\in[1;\infty]$. Assume that \ref{A0gr} holds for some $\gamma>\frac12$ and $\rho>\frac1{r'}$. Then the periodic Wick-ordered cubic modulated NLS \eqref{EQ} is locally well-posed in $\F L^{s,r}(\T)$. More precisely, for any $R>0$, there exists $T(R)\sim \jb{R}^{-\frac2\gamma}>0$ such that for any $u_0\in \F L^{s,r}(\T)$ with $\|u_0\|_{\F L^{s,r}}\le R$, there exists a unique mild solution $u\in C([0;T);\F L^{s,r}(\T))\cap X^{s,r}_T$ to \eqref{EQ} on $[0;T)$. Moreover, the flow map $u_0 \in \F L^{s,r}(\T)\mapsto u\in C([0;T);\F L^{s,r}(\T))\cap X^{s,r}_T$ is locally Lipschitz continuous.
\end{theorem}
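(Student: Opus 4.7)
The plan is to set up a contraction mapping argument on the Duhamel formula \eqref{Duhamel} in a ball of a space $X^{s,r}_T$ defined as a $U^2$-type space (based on $\F L^{s,r}(\T)$) adapted to the modulated linear propagator $e^{-W_t\L}$, in the spirit of the $X^s_T$ spaces used for the earlier theorems. By the duality characterization of $U^p$-$V^{p'}$ spaces (Proposition~\ref{PROP:UpVp}~(iii)), closing the fixed point reduces to the trilinear estimate
\begin{align*}
\Big|\int_0^T\int_\T \big((|u|^2-2\|u\|_{L^2}^2)u\big)(\tau,x)\,\cj{w(\tau,x)}\,dx\,d\tau\Big|\les T^{\gamma}\|u\|_{X^{s,r}_T}^3\|w\|_{X^{-s,r'}_T},
\end{align*}
where $X^{-s,r'}_T$ is the analogous $U^2$-type space based on $\F L^{-s,r'}(\T)$.

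The key structural input is the gauge transform $v(\tau):=e^{W_\tau \L}u(\tau)$, $v_w(\tau):=e^{W_\tau\L}w(\tau)$, which turns the space-time pairing into the Fourier-side sum
\begin{align*}
\sum_{\substack{n=n_1-n_2+n_3\\ n_1\neq n_2,\ n_3\neq n_2}}\int_0^T e^{-iW_\tau\Phi(\bar n)}\,\ft v(\tau,n_1)\cj{\ft v(\tau,n_2)}\ft v(\tau,n_3)\cj{\ft v_w(\tau,n)}\,d\tau
\end{align*}
with cubic resonance $\Phi(\bar n)=n_1^2-n_2^2+n_3^2-n^2=-2(n_1-n_2)(n_3-n_2)$, the Wick renormalization being precisely designed to remove the resonant set $\{n_1=n_2\}\cup\{n_3=n_2\}$, so that $|\Phi|\ge 2$ on the remaining indices. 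Using the atomic structure, one may reduce $v,v_w$ to step functions on a common partition $\{I_k\}$ of $[0;T]$; each time integral then reduces exactly to the Fourier transform of an occupation-measure density,
\begin{align*}
\int_{I_k}e^{-iW_\tau\Phi}\,d\tau=\widehat{\tfrac{d\mu_{I_k}}{dz}}(\Phi),
\end{align*}
to which assumption \ref{A0gr} applies: the increment bound for the local time in $C^\gamma_t\F L^{\rho,\infty}(\R)$ yields $|\widehat{\frac{d\mu_{I_k}}{dz}}(\Phi)|\les |I_k|^\gamma\jb{\Phi}^{-\rho}$. Summation over $k$, combined with the $U^2$-atomic bound and the concavity of $t\mapsto t^\gamma$, collects an effective smoothing factor $T^\gamma\jb{\Phi}^{-\rho}$.

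It remains to prove the purely symbolic quadrilinear inequality
\begin{align*}
\sum_{\substack{n=n_1-n_2+n_3\\ n_1\neq n_2,\ n_3\neq n_2}}\frac{|\ft v(n_1)||\ft v(n_2)||\ft v(n_3)||\ft v_w(n)|}{|n_1-n_2|^\rho|n_3-n_2|^\rho}\les \|v\|_{\F L^{s,r}}^3\|v_w\|_{\F L^{-s,r'}},
\end{align*}
where the weights $\jb{n_j}^s$ can be distributed using the frequency constraint $n_1-n_2+n_3-n=0$ and a standard high-low argument. The endpoints $r=\infty$ (forcing $\rho>1$ via the two nested $\ell^1$-sums over the $(n_1-n_2)$ and $(n_3-n_2)$ variables) and $r=1$ (trivial, $\rho>0$ suffices) bracket the full range, and a mixed $\ell^r$-$\ell^{r'}$ Hölder/Young argument on the two convolution variables $(n_1-n_2,n_3-n_2)$ yields the announced threshold $\rho>\frac1{r'}$. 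The hardest point is precisely this multilinear step: while the gain $\jb{\Phi}^{-\rho}$ is quantitatively strong, translating it into a sharp $\F L^{s,r}$-valued inequality requires balancing the two kernel factors against the frequency-support constraint in a way that matches the endpoint $r=\infty$, where Wick ordering is essential and the deterministic theory is known to fail. Once this estimate is in hand, the contraction closes under $T^\gamma R^2\ll 1$, yielding the announced local time $T(R)\sim\jb{R}^{-2/\gamma}$, while Lipschitz dependence follows from the usual variant of the same trilinear estimate applied to a difference.
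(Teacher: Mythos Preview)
Your proposal is correct and follows essentially the same route as the paper: Fourier-side expansion of the Wick-ordered nonlinearity with the factorized phase $\Phi=2(n_1-n_2)(n_3-n_2)$, the occupation-time identity $\int_{I}e^{-iW_\tau\Phi}d\tau=\widehat{\tfrac{d\mu_I}{dz}}(\Phi)$ on atomic subintervals combined with \ref{A0gr} to extract $|I|^\gamma\jb{\Phi}^{-\rho}$, a H\"older/Young argument in the convolution variables giving the threshold $\rho>1/r'$, and finally summation over the partitions via H\"older with exponent $1/\gamma$ (this is where $\gamma>\tfrac12$ enters) to collect the $T^\gamma$ factor. Two small points to tighten: the dual variable $w$ should sit in $Y^{-s,r'}_T$ (the $V^2$-based space from Proposition~\ref{PROP:UpVp}~(iii)) rather than $X^{-s,r'}_T$, and the paper then embeds $V^2\hookrightarrow U^b$ with $b=(1-\gamma)^{-1}>2$ to use atoms; and the Wick ordering does not remove the entire resonant set but leaves the diagonal piece $\RR(u)=-\sum_n|\widehat u_n|^2\widehat u_n$ (where $\Phi=0$), which however is trivially bounded in $\F L^{s,r}$.
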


The definition of the Fourier-Lebesgue space $\F L^{s,r}(\T)$ is recalled in Section~\ref{SEC:Prelim} below. In this more general Banach scale, the scaling critical regularity is $s_c=-\frac1r$, $r\in[1;\infty]$. Thus Theorem~\ref{THM:NLSFL} is in the same spirit as Theorem~\ref{THM:NLScrit}, showing that the modulated equation \eqref{EQ} is also well-posed at sub-critical and critical regularity. Note however that the usual cubic Wick-ordered periodic NLS \eqref{EQ:deterministic} is only known to be locally well-posed in $\F L^{s,r}(\T)$ for any $s\ge 0$ and $r\in[1;\infty)$ \cite{GrunrockHerr}; see also \cite{OhWangFLNF,OhWangFLGWP}. Since $L^2(\T) \varsubsetneq \F L^{0,r}(\T)$ when $r>2$, the ``Wick renormalization'' $|u|^2u-2\|u\|_{L^2}^2u$ of the usual cubic nonlinearity $|u|^2u$ is necessary to have existence of solutions for data outside of $L^2(\T)$, as pointed out in \cite{GuoOh}, while in $L^2$ solutions of the usual cubic equation are converted to solution of the Wick-ordered equation via the gauge transform $u(t)\mapsto e^{it2\|u_0\|_{L^2}^2}u(t)$ due to the conservation of the $L^2$ norm. Thus, in the case $r=2$, Theorem~\ref{THM:NLSFL} recovers \cite[Theorem 1.8]{CG1} and addresses the case $d=m=1$ missing in Theorem~\ref{THM:NLScrit} above. 

In the scale of Fourier-Lebesgue spaces, local well-posedness in the critical space $\F L^{0,\infty}(\T)$ is still a challenging open problem for the deterministic equation \eqref{EQ:deterministic}. Thus Theorem~\ref{THM:NLSFL} shows a strong regularization effect of the noisy dispersion in \eqref{EQ} compared to its deterministic counterpart \eqref{EQ:deterministic} since the well-posedness result of Theorem~\ref{THM:NLSFL} includes the critical space $\F L^{0,\infty}(\T)$.

Note also that this improvement on the well-posedness for the modulated equation \eqref{EQ} compared to the deterministic one \eqref{EQ:deterministic} is somehow surprising in view of the counter-example  provided by Stewart \cite{Stewart} showing that such an improvement is not possible in the case of the quintic periodic NLS.

\begin{remark}\label{REM:reg}
\rm
In \cite{ModulatedNonRes}, we show an even stronger regularization effect for modulated dispersive PDEs enjoying a strong non-resonant condition, similar to the result in \cite{CG2}. Namely, for these models, well-posedness holds at \emph{any} (in particular super-critical) regularity $s$ provided that $W_t$ is \emph{irregular} enough (i.e. its occupation measure is regular enough). This in particular applies to the periodic Wick-ordered cubic fractional NLS with slightly more dispersion than the usual NLS, namely $\L=iD^\alpha$ for $\alpha>2$ instead of $\alpha=2$. Still, Theorem~\ref{THM:NLSFL} shows a regularization by noise effect for a model which is \emph{not} strongly non-resonant.
\end{remark}

\textbf{Structure of the paper}
After recalling the notations and definitions of the initial data and solution spaces in Section~\ref{SEC:Prelim}, we give the proof of Theorem~\ref{THM:noreg} in Section~\ref{SEC:Noreg}, and in particular we detail how the various models of Example~\ref{EX:noreg} fit into the framework of Theorem~\ref{THM:noreg} in Subsection~\ref{SUBS:Example1}. Then we give the proofs of Theorems~\ref{THM:NLScrit} and~\ref{THM:NLSFL} in Sections~\ref{SEC:NLScrit} and~\ref{SEC:NLSFL}, respectively.
\begin{ackno}

\rm 
The author would like to thank Anne de Bouard, Arnaud Debussche, Tadahiro Oh, Nikolay Tzvetkov, and Yuzhao Wang for helpful discussions. The author was partially supported by the ANR project Smooth ANR-22-CE40-0017 and the PEPS JCJC project BFC n°272076.

\end{ackno}

\section{Preliminaries}\label{SEC:Prelim}
\subsection{Notations}
For complex numbers $a\in\C$, we write $a^-=\cj a$ and $a^+=a$. For positive numbers $A$ and $B$, we write $A\les B$ if there is a constant $C>0$, independent of the various parameters, such that $A\le CB$; $C=10^{42}$ should work throughout this text. We also write $A\sim B$ if both $A\les B$ and $B\les A$. We also write $A\vee B = \max(A;B)$ and $A\wedge B=\min(A;B)$.

In the following, we use $N,N_j,M\in 2^{\N}$ to denote dyadic integers in $2^{\N}=\{2^m,~~m\in \N\}$. For $N\in 2^\N$ a dyadic integer, we first define $\P_{\le N}$ to be a smooth version of the Dirichlet projection onto the frequencies $\{|\xi|\leq N\}\subset \widehat{\M}$. Namely, take a smooth even non-increasing cut-off $\psi_0\in C_0^{\infty}(\R)$ satisfying $\supp \psi_0 \subset [-1,1]$ and $\psi_0\equiv 1$ on $[-1/2,1/2]$, then $\P_N$ is defined as the Fourier multiplier with symbol $\psi_0\big(N^{-1}\xi\big)$, $\xi\in\ft\M$. Then we define
$$\P_N = \begin{cases}
\P_{\le 1},& N=1,\\
\P_{\le N}-\P_{\le \frac{N}2},& N\ge 2.
\end{cases}
$$

More generally, for $S\subset\ft\M$ a set of frequencies, we define the projector $\P_S$ as the Fourier multiplier with symbol $\mathbf{1}_S(\xi)$, $\xi\in\ft\M$.

\subsection{Properties associated with the noise}
In this whole manuscript, we assume that $W:\R\to\R$ is continuous, and we write $\mu$ for its occupation measure \eqref{mu}. We will always assume either \ref{A0}, \ref{A0*} or \ref{A0gr} holds, and in particular that $d\mu_{[0;T]}$ is absolutely continuous with respect to the Lebesgue measure $dz$ for any $T>0$. We will make heavy use of the following \textbf{occupation time formula}; see \cite[Theorem 6.4]{Review}.
\begin{proposition}
For every non-negative measurable function $F:\R\to [0;\infty)$ and $T>0$, it holds
\begin{align}\label{occupation}
\int_0^TF(W_t)dt = \int_\R F(z)\frac{d\mu_{[0;T]}}{dz}(z)dz.
\end{align}
\end{proposition}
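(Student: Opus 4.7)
The formula \eqref{occupation} is essentially a change of variables via the pushforward measure, so the plan is to unwind the definitions and apply a standard measure-theoretic approximation argument.

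First I would observe that the occupation measure $\mu_{[0;T]}$ as defined in \eqref{mu} is precisely the pushforward of the restricted Lebesgue measure $\mathrm{Leb}|_{[0;T]}$ under the continuous map $t\mapsto W_t$. Indeed, for any Borel set $A\in\mathcal{B}(\R)$,
\begin{align*}
\mu_{[0;T]}(A)=\int_0^T \mathbf{1}_A(W_t)\,dt,
\end{align*}
so the identity \eqref{occupation} holds for $F=\mathbf{1}_A$ by definition. Using the assumption (from \ref{A0}, \ref{A0*} or \ref{A0gr}) that $\mu_{[0;T]}\ll dz$, the Radon--Nikodym derivative $\frac{d\mu_{[0;T]}}{dz}$ exists and one rewrites $\mu_{[0;T]}(A)=\int_\R \mathbf{1}_A(z)\,\frac{d\mu_{[0;T]}}{dz}(z)\,dz$, which gives \eqref{occupation} on indicators.

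Next I would extend by linearity from indicators to non-negative simple functions $F=\sum_{k=1}^n c_k\mathbf{1}_{A_k}$, $c_k\ge 0$. Finally, for a general non-negative measurable $F$, approximate $F$ from below by an increasing sequence of non-negative simple functions $F_n\uparrow F$ and apply the monotone convergence theorem on both sides of \eqref{occupation}: on the left, $F_n(W_t)\uparrow F(W_t)$ pointwise in $t$, and on the right, $F_n(z)\frac{d\mu_{[0;T]}}{dz}(z)\uparrow F(z)\frac{d\mu_{[0;T]}}{dz}(z)$ pointwise in $z$ (using non-negativity of the local time).

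There is no real obstacle here; the statement is a direct consequence of the fact that $\mu_{[0;T]}$ is the image measure of $\mathrm{Leb}|_{[0;T]}$ under $W$, combined with absolute continuity. The only point that merits a remark is the measurability of $t\mapsto F(W_t)$, which follows from continuity of $W$ and Borel-measurability of $F$. Alternatively, one can simply invoke the abstract change-of-variables formula for pushforward measures and then the definition of the Radon--Nikodym derivative, which shortcuts the approximation entirely and yields \eqref{occupation} in one line.
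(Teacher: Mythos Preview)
Your argument is correct and is the standard proof of the occupation time formula. Note that the paper does not actually give its own proof of this proposition; it simply cites \cite[Theorem 6.4]{Review}, so there is no in-paper argument to compare against, and your self-contained derivation via the pushforward/Radon--Nikodym identification plus monotone convergence is exactly what one would expect.
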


\subsection{Function spaces}\label{SUBS:space}
\subsubsection{\textbf{Spatial function spaces}}
Hereafter, we will use the Sobolev and Besov spaces $W^{s,p}(\M)$ and $B^{s}_{p,q}(\M)$, $s\in\R$, $1\leq p,q\leq \infty$, which are defined via the norms
\begin{align*}
\|u\|_{W^{s,p}} \deff \big\|\jb{D}^su\big\|_{L^p},
\end{align*}
where $\jb{D}^s$ is the Fourier multiplier with symbol $\jb{\xi}^s$, $\xi\in\ft\M$, 
and
\begin{align*}
\|u\|_{B^s_{p,q}} \deff \Big(\sum_{N\ge 1}N^{sq}\big\|\P_N u\big\|_{L^{p} }^q\Big)^{\frac1q}.
\end{align*}

We recall the following properties of Besov spaces; see e.g. \cite{BCD} in case $\M=\R^d$ and \cite{SNLW} in case $M$ is a smooth closed Riemannian manifold.
\begin{lemma}\label{LEM:Besov}
Let $\M=\R^d$ or $\M$ be any compact Riemannian manifold of dimension $d$ without boundary. \\
\textup{(i)} For any $s\in\R$ we have $B^s_{2,2}(\M)=H^s(\M)$, and more generally for any $2\le p <\infty$ and $\eps>0$ we have
\begin{align*}
\|u\|_{B^s_{p,\infty}(\M)}\les \|u\|_{W^{s,p}(\M)}\les \|u\|_{B^s_{p,2}(\M)} \les \|u\|_{B^{s+\eps}_{p,\infty}(\M)}.
\end{align*}
\textup{(ii)} Let $s\in\R$ and $1\leq p_1\leq p_2\leq \infty$ and $q\in [1,\infty]$. Then for any $f\in B^s_{p_1,q}(\M)$ we have
\begin{align*}
\|f\|_{B^{s-d\big(\frac1{p_1}-\frac1{p_2}\big)}_{p_2,q}(\M)}\les \|f\|_{B^s_{p_1,q}(\M)}.
\end{align*}
\textup{(iii)} Let $\alpha,\beta\in \R$ with $\alpha+\beta>0$ and $p_1,p_2,q_1,q_2\in [1,\infty]$ with
\begin{align*}
\frac1p = \frac1{p_1}+\frac1{p_2} \qquad\text{ and }\qquad \frac1q=\frac1{q_1}+\frac1{q_2}.
\end{align*}
 Then for any $f\in B^{\alpha}_{p_1,q_1}(\M)$ and $g\in B^{\beta}_{p_2,q_2}(\M)$, we have $fg \in B^{\alpha\wedge \beta}_{p,q}(\M)$, and moreover it holds
\begin{align*}
\|fg\|_{B^{\alpha\wedge\beta}_{p,q}(\M)}\les \|f\|_{B^{\alpha}_{p_1,q_1}(\M)}\|g\|_{B^{\beta}_{p_2,q_2}(\M)}.
\end{align*}
\end{lemma}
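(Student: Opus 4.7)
The plan is to reduce each of the three assertions to standard Littlewood--Paley calculus. These facts are classical on $\R^d$ (see \cite{BCD}) and their extension to a closed Riemannian manifold $\M$ via the spectral functional calculus of an appropriate first-order elliptic operator (e.g.\ $\sqrt{1-\Delta_g}$) is worked out in \cite{SNLW}. The two core ingredients I will invoke uniformly in $N$ and on both geometries are: the Bernstein inequality $\|\P_N u\|_{L^q(\M)}\les N^{d(1/p-1/q)}\|\P_N u\|_{L^p(\M)}$ for $1\le p\le q\le \infty$, and the vector-valued Littlewood--Paley square-function equivalence $\|f\|_{L^p(\M)}\sim \|(\sum_N |\P_N f|^2)^{1/2}\|_{L^p(\M)}$ for $1<p<\infty$.

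For (i), the identity $B^s_{2,2}=H^s$ is Plancherel applied block by block on $\R^d$, or orthogonality of the eigenbasis $\{\psi_j\}$ on a closed manifold, combined with the equivalence $\jb{\xi}^{2s}\sim\sum_N N^{2s}\psi_0(N^{-1}\xi)^2$. For the embedding chain when $p\ge 2$, I would prove $\|u\|_{B^s_{p,\infty}}\les \|u\|_{W^{s,p}}$ from the uniform $L^p$-boundedness of the multiplier $N^s\P_N\jb{D}^{-s}$, whose symbol is controlled by Mikhlin/spectral multiplier estimates. Then $\|u\|_{W^{s,p}}\les \|u\|_{B^s_{p,2}}$ follows by applying the square-function equivalence to $\jb{D}^s u$, using $|\P_N \jb{D}^s u|\sim N^s|\P_N u|$ after a harmless frequency localization, and invoking Minkowski in the form $\|(\sum_N|f_N|^2)^{1/2}\|_{L^p}\le (\sum_N\|f_N\|_{L^p}^2)^{1/2}$, which is valid precisely because $p\ge 2$. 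The final inequality $\|u\|_{B^s_{p,2}}\les \|u\|_{B^{s+\eps}_{p,\infty}}$ is Cauchy--Schwarz applied to the dyadic sum together with the convergence of the geometric series $\sum_N N^{-2\eps}$.

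Assertion (ii) follows immediately from Bernstein applied block by block, yielding $N^{s-d(\frac1{p_1}-\frac1{p_2})}\|\P_N u\|_{L^{p_2}}\les N^s\|\P_N u\|_{L^{p_1}}$, and then taking $\ell^q_N$ norms on both sides. For (iii), the natural tool is Bony's paraproduct decomposition $fg=T_f g+T_g f+R(f,g)$. After frequency localization, the low--high paraproduct $T_f g$ has each block $S_{N/2}f\cdot \P_N g$ spectrally localized at scale $N$ and is controlled by H\"older with factors $L^{p_1}\times L^{p_2}$ and an appeal to only \emph{one} of the regularities (absorbing $S_{N/2}f$ using $\alpha\le\alpha\wedge\beta$ when needed, via Bernstein to shift integrabilities); symmetrically for $T_g f$. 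The hypothesis $\alpha+\beta>0$ enters only to sum the \emph{resonant} term $R(f,g)=\sum_{N_1\sim N_2}\P_{N_1}f\cdot \P_{N_2}g$: the resulting dyadic piece at output scale $M\le N_1$ contributes a weighted sum $\sum_{N_1\ge M}N_1^{-(\alpha+\beta)}M^{\alpha\wedge\beta}N_1^{\alpha+\beta}\|\P_{N_1}f\|_{L^{p_1}}\|\P_{N_2}g\|_{L^{p_2}}$, which is summable in $N_1$ exactly when $\alpha+\beta>0$ and then summable in $M$ against the $B^{\alpha\wedge\beta}_{p,q}$ weight. The main technical point --- and the only place where the manifold case diverges from the Euclidean one --- is the validity of the Mikhlin-type spectral multiplier bounds and of the square-function equivalence on a closed Riemannian manifold; for these I would simply invoke the results recorded in \cite{SNLW}.
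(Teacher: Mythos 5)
The paper does not actually prove Lemma~\ref{LEM:Besov}: it records these facts and refers to \cite{BCD} for the Euclidean case and to \cite{SNLW} for the closed-manifold case (where the Littlewood--Paley decomposition is built from a spectral multiplier calculus for the Laplace--Beltrami operator). Your sketch essentially reconstructs the standard proofs that live in those references, so there is no conflict --- but one should be clear that the argument you write out is \emph{the} content the paper is outsourcing, not an alternative to it. Your ingredients (Bernstein, the vector-valued square-function equivalence, Minkowski's inequality in the direction that requires $p\ge 2$, Mikhlin/spectral multiplier bounds, and Bony's paraproduct for (iii)) are precisely the right ones, and the role of $\alpha+\beta>0$ in summing the resonant part $R(f,g)$ is correctly identified.

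Two small points worth fixing. First, in your treatment of the low--high paraproduct $T_f g$ you write ``absorbing $S_{N/2}f$ using $\alpha\le\alpha\wedge\beta$''; the inequality goes the other way ($\alpha\wedge\beta\le\alpha$, and likewise $\le\beta$), and it is this direction that lets the output weight $M^{\alpha\wedge\beta}$ be dominated by $M^{\beta}$ on $\P_M g$ (and, when $\alpha<0$, by the $M^{-\alpha}$ growth of $\|S_{M/2}f\|_{L^{p_1}}$). Second, in the resonant-term estimate the displayed sum
\begin{align*}
\sum_{N_1\ge M}N_1^{-(\alpha+\beta)}\,M^{\alpha\wedge\beta}\,N_1^{\alpha+\beta}\|\P_{N_1}f\|_{L^{p_1}}\|\P_{N_2}g\|_{L^{p_2}}
\end{align*}
has the factor $N_1^{-(\alpha+\beta)}N_1^{\alpha+\beta}=1$ written redundantly; what you mean (and what is correct) is to group $N_1^{\alpha}\|\P_{N_1}f\|_{L^{p_1}}$ and $N_2^{\beta}\|\P_{N_2}g\|_{L^{p_2}}$ into the Besov data and leave $M^{\alpha\wedge\beta}N_1^{-(\alpha+\beta)}$ as the summation kernel; the condition $\alpha+\beta>0$ then gives a Schur/Young bound yielding the $\ell^q_M$ summability, not merely pointwise summability in $N_1$. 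With these cosmetic corrections, your sketch is a faithful account of what \cite{BCD} and \cite{SNLW} prove.
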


Finally, for $s\in\R$ and $r\in[1;\infty]$, we define the Fourier-Lebesgue space $\F L^{s,r}(\M)$ via the norm $$\|u\|_{\F L^{s,r}} = \|\jb{\xi}^s \widehat{u}\|_{L^r(\widehat{\M})}.$$
In particular $H^s(\M)=W^{s,2}(\M)=\F L^{s,2}(\M)$.

\subsubsection{\textbf{Adapted function spaces}}
In this subsection we recall the definition and properties of $U^2/V^2$-type spaces associated with the flow of the linear dispersive equation with modulated dispersion.

These spaces were first used in \cite{KT05,KT07} to study the Cauchy problem for some dispersive PDEs, and we refer to \cite{KTV,HHK,HTT,Schippa} and in particular \cite{CHT} for the proofs of their properties that we list below. We give the necessary adaptations to the case of Banach-valued functions as in \cite{KTV} since we consider functions of $t$ with values in $\F L^{s,r}(\M)$.

\begin{definition}\label{DEF:UpVp}
\rm
Let $s\in\R$, $r\in[1;\infty]$, $1\le p<\infty$, and $I=[a;b)\subset \R$ with $-\infty\le a<b\le\infty$. Let then $\mathcal{Z}(I)$ be the collection of finite non-decreasing sequences $\{ t_k \}_{k=0}^K$ in $I$ with $t_0=a$ and $t_K=b$.\\
\textup{(i)} We define $V^p(I)\F L^{s,r}(\M)$ as the space of functions $u: I \to \F L^{s,r}(\M)$ such that $u(t)$ has a limit as $t\searrow a$ and also ${\displaystyle \lim_{t\nearrow b}u(t)=0}$, endowed with the norm
\begin{align*}
\| u \|_{V^p\F L^{s,r}} = \sup_{ \{t_k \}_{k=0}^K \in \mathcal{Z}(I)}\Big(  \sum_{k=1}^K \|u(t_k) - u(t_{k-1})\|_{\F L^{s,r}}^p \Big)^{\frac1p}.
\end{align*}
\textup{(ii)} We define a $U^p(I)\F L^{s,r}(\M)$-atom to be a piecewise continuous function $A:I\to \F L^{s,r}(\M)$ such that
\begin{align*}
A(t) = \sum_{k=1}^{K}\ind_{[t_{k-1};t_{k})}\phi_{k}
\end{align*}
for some partition $\{t_{k}\}_{k=0}^K\in\ZZ(I)$ and collection $\{\phi_{k}\}_{k=1}^K\in \F L^{s,r}(\M)^K$ such that ${\displaystyle \sum_{k=1}^K\|\phi_{k}\|_{\F L^{s,r}}^p\le 1}$. Note that $A$ is right-continuous with ${\displaystyle \lim_{t\searrow a}A(t)=0}$ and has a limit as $t\nearrow b$.

Then $U^p(I)\F L^{s,r}(\M)$ is the space of functions $u:I\to \F L^{s,r}(\M)$ such that
\begin{align*}
u = \sum_{j=0}^\infty \lambda_j A_j,
\end{align*}
with $\{\lambda_j\}_j\in\ell^1(\N;\K)$, and $A_j$ are $U^p(I)\F L^{s,r}(\M)$-atoms, endowed with the norm
\begin{align*}
\|u\|_{U^p(I)\F L^{s,r}}=\inf\Big\{\sum_{j=0}^\infty|\lambda_j|,~~u=\sum_{j=0}^\infty\lambda_jA_j,~~A_j\text{ are $U^p(I)\F L^{s,r}$-atoms}\Big\}.
\end{align*}

\end{definition}
These spaces enjoy the following properties; again, we refer to \cite{KTV,HHK,HTT,CHT,Schippa} for the proofs.
\begin{proposition}\label{PROP:UpVp0}
The following properties hold:\\
\textup{(i)} $U^p(I)\F L^{s,r}(\M)$ and $V^p(I)\F L^{s,r}(\M)$ are Banach spaces.\\
\textup{(ii)} If $u\in U^p([-\infty;\infty)\F L^{s,r}(\M)$, $a>-\infty$, and $u(a)=0$, then $\mathbf{1}_I(t)u\in U^p(I)\F L^{s,r}(\M)$ and $\|u\|_{U^p(I)\F L^{s,r}}=\|\mathbf{1}_I u\|_{U^p([-\infty;\infty))\F L^{s,r}}$.\\
Similarly, if $u\in V^p([-\infty;\infty))\F L^{s,r}(\M)$, $b<\infty$, and $u(b)=0$, then $\mathbf{1}_I(t)u\in V^p(I)\F L^{s,r}(\M)$ and $\|u\|_{V^p(I)\F L^{s,r}}+\|u(a)\|_{\F L^{s,r}}=\|\mathbf{1}_I u\|_{V^p([-\infty;\infty))\F L^{s,r}}$.\\
\textup{(iii)} For any $1\le p<q<\infty$, there are the continuous embeddings $
U^p(I)\F L^{s,r}(\M)\subset U^q \F L^{s,r}(\M)\subset L^\infty(I;\F L^{s,r}(\M))$ and $V^p(I)\F L^{s,r}(\M)\subset V^q(I)\F L^{s,r}(\M)\subset L^\infty(I;\F L^{s,r}(\M))$.\\
\textup{(iv)} If $u\in U^p(I)\F L^{s,r}(\M)$ with $u(b)=0$, then $u\in V^p(I)\F L^{s,r}(\M)$ and $\|u\|_{V^p(I)\F L^{s,r}}\les \|u\|_{U^p(I)\F L^{s,r}}$.\\
Similarly, for any $1\le p<q<\infty$, if $u\in V^p(I)\F L^{s,r}(\M)$ is right-continuous with $u(a)=0$, then $u\in U^q(I)\F L^{s,r}(\M)$ and $\|u\|_{U^q(I)\F L^{s,r}}\les \|u\|_{V^p(I)\F L^{s,r}}$.
\end{proposition}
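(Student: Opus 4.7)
The plan is to follow the standard Koch--Tataru framework, making only the minor observation that everything goes through verbatim when the scalar target space $\K$ is replaced by the Banach space $\F L^{s,r}(\M)$, since the proofs only use the norm structure of the target.

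For (i), the triangle inequality for $\|\cdot\|_{V^p}$ follows from Minkowski in $\ell^p$ applied termwise to the differences $u(t_k)-u(t_{k-1})$; non-degeneracy uses the built-in limit $u(b^-)=0$ by inserting $t$ into a two-point partition to get $\|u(t)\|_{\F L^{s,r}}\le \|u\|_{V^p}$. For completeness of $V^p$, a Cauchy sequence $\{u_n\}$ is pointwise Cauchy in $\F L^{s,r}$ by the same two-point partition trick, so $u(t):=\lim u_n(t)$ exists, and a Fatou-type bound over partitions in $\ZZ(I)$ shows $u\in V^p$ with $u_n\to u$. The $U^p$ norm is a norm because atomic decompositions can be concatenated (sub-additivity) and each atom is pointwise bounded by its defining $\ell^p$ coefficient (non-degeneracy). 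For completeness, extract a subsequence $\{u_{n_k}\}$ with $\|u_{n_{k+1}}-u_{n_k}\|_{U^p}\le 2^{-k}$, sum the atomic decompositions, and verify convergence of the resulting atomic series.

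For (ii), both statements are essentially tautological: extending a $U^p$-function by zero outside $I$ extends its atomic decomposition trivially (atoms remain atoms on the larger interval); extending a $V^p$-function by zero adds only the jump at $t=a$, giving rise to the extra $\|u(a)\|_{\F L^{s,r}}$ term on the right-hand side. For (iii), the embeddings $V^p\hookrightarrow V^q$ and $U^p\hookrightarrow U^q$ for $p<q$ reduce to the pointwise inequality $\|\cdot\|_{\ell^q}\le\|\cdot\|_{\ell^p}$ applied either to the jumps (for $V$) or to the atomic coefficients (for $U$, noting that any $U^p$-atom is automatically a $U^q$-atom). The embedding into $L^\infty(I;\F L^{s,r}(\M))$ for $V^p$ is the two-point partition estimate; for $U^p$ one notes that any atom $A$ satisfies $\sup_t\|A(t)\|_{\F L^{s,r}}=\max_k\|\phi_k\|_{\F L^{s,r}}\le\bigl(\sum_k\|\phi_k\|^p_{\F L^{s,r}}\bigr)^{1/p}\le 1$, and conclude by atomic decomposition.

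For (iv), the easy direction $U^p\hookrightarrow V^p$ is obtained by checking that for an atom $A=\sum_k \mathbf{1}_{[t_{k-1};t_k)}\phi_k$, the jumps across any refinement of $\{t_k\}$ are dominated by $\phi_{k+1}-\phi_k$ in $\ell^p$, so $\|A\|_{V^p}\lesssim 1$; then extend by atomic decomposition. The harder reverse embedding $V^p\hookrightarrow U^q$ for $q>p$ is the main obstacle and proceeds via a greedy stopping-time argument: for each dyadic scale $\varepsilon_n=2^{-n}\|u\|_{V^p}$, define a partition $\{\tau^{(n)}_k\}$ by selecting successive times where $\|u(\tau^{(n)}_k)-u(\tau^{(n)}_{k-1})\|_{\F L^{s,r}}\sim \varepsilon_n$; by the definition of the $V^p$ norm, the number of such times is bounded by $2^{np}$. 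Writing $u$ telescopically across these partitions produces an atomic decomposition of telescoping differences $u_{n+1}-u_n$, each of $U^q$-norm $\lesssim \varepsilon_n\cdot 2^{np/q}=\|u\|_{V^p}\cdot 2^{-n(1-p/q)}$, which sums because $q>p$. This argument, carried out exactly as in \cite{HHK,KTV}, gives the quantitative bound stated in (iv) and is the only nontrivial ingredient in the proposition.
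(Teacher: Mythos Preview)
Your sketch is a faithful outline of the standard Koch--Tataru arguments and contains no gaps. Note, however, that the paper does not actually give a proof of this proposition: it simply cites \cite{KTV,HHK,HTT,CHT,Schippa} for the proofs, so in that sense your proposal is strictly more detailed than what appears in the paper.
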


\begin{definition}
\rm
We define the Banach space $$DU^2(I)\F L^{s,r}(\M)=\big\{\dt u,~~u\in U^2(I)\F L^{s,r}(\M)\big\},$$
where the derivative is taken in the sense of distributions, endowed with the norm $\|f\|_{DU^2(I)\F L^{s,r}}=\|u\|_{U^2(I)\F L^{s,r}}$. The condition $u(a)=0$ for $u\in U^p(I)\F L^{s,r}(\M)$ guarantees that for $f\in DU^2(I)\F L^{s,r}(\M)$ there is a unique choice of $u\in U^2(I)\F L^{s,r}(\M)$ such that $f=\dt u$.
\end{definition}

The main result regarding this space is the duality relation between $DU^2(I)\F L^{s,r}(\M)$ and $V^2(I)\F L^{-s,r'}(\M)$; see \cite{KTV} for the Banach-valued version.

\begin{proposition}\label{PROP:duality}
For all $f\in L^1(I;\F L^{s,r}(\M))\subset DU^2(I)\F L^{s,r}(\M)$, it holds
\begin{align*}
\|f\|_{DU^2(I)\F L^{s,r}}=\sup_{\|v\|_{V^2(I)\F L^{-s,r'}}\le 1}\Big|\int_I\int_\M f(t,x)\cj{v}(t,x)dxdt\Big|.
\end{align*}
\end{proposition}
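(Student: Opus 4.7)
The plan is to deduce the identity from the bilinear duality $(U^2)^\ast \simeq V^2$, standard in the theory of these spaces and established in the Banach-valued setting as needed here in \cite{KTV}. Given $f\in L^1(I;\F L^{s,r}(\M))$, the antiderivative $u(t):=\int_a^t f(\tau)\,d\tau$ is absolutely continuous with $u(a^+)=0$ and of bounded variation; hence it defines an element of $U^2(I)\F L^{s,r}(\M)$ with distributional derivative equal to $f$, so that by definition of the $DU^2$-norm, $\|f\|_{DU^2\F L^{s,r}}=\|u\|_{U^2\F L^{s,r}}$.

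The next step is to recall the $U^2$--$V^2$ duality realized by the Stieltjes-type bilinear form defined on step functions by
\[
B_\pi(u,v)=\sum_{k=1}^K \big\langle u(t_k)-u(t_{k-1}),\, v(t_{k-1})\big\rangle,\qquad \pi=\{t_k\}_{k=0}^K\in\ZZ(I),
\]
with $\langle\cdot,\cdot\rangle$ denoting the $(\F L^{s,r},\F L^{-s,r'})$ duality pairing. As shown in \cite{HHK,KTV,CHT,Schippa}, $B$ extends continuously to $U^2\times V^2$ with operator norm bounded by $1$, and realizes the norm duality
\[
\|u\|_{U^2\F L^{s,r}}=\sup_{\|v\|_{V^2\F L^{-s,r'}}\le 1}\big|B(u,v)\big|
\]
for any $u\in U^2(I)\F L^{s,r}(\M)$.

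It then remains to identify $B(u,v)$ with $\int_I\int_\M f\cj v\,dx\,dt$ when $u$ is the antiderivative of $f\in L^1$. For a $U^2$-atom $u=\sum_k\ind_{[t_{k-1};t_k)}\phi_k$, the distributional derivative $\dt u$ is a finite sum of Dirac masses, and testing against a right-continuous representative of $v\in V^2$ yields $B(u,v)=\int_I\langle \dt u,v\rangle\,dt$ by direct computation. For general $f\in L^1$, I would approximate $u$ uniformly on $I$ by dyadic step-atoms $u_n$ so that $\dt u_n$ converges to $f$ against $v$, and pass to the limit in both $B(u_n,v)\to B(u,v)$ and $\int_I\langle \dt u_n,v\rangle\,dt\to\int_I\langle f,v\rangle\,dt$. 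The main delicate point will be handling the endpoint conventions $u(a^+)=0$ for $U^2$-atoms and $v(b^-)=0$ for $V^2$-functions, so that no boundary terms appear in this Stieltjes identification. Once this is settled, combining the three displayed equalities yields the claimed formula.
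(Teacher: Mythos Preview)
Your proposal is correct and follows the same approach as the paper, which does not give a self-contained proof but simply refers to \cite{KTV} for the Banach-valued version of the $U^2$--$V^2$ duality; your sketch fills in the standard outline of that argument. One minor remark: in the identification step it is slightly cleaner to fix $u=\int_a^\cdot f$ and approximate $v\in V^2$ by step functions adapted to a partition (rather than approximating $u$ by atoms), since then $B_\pi(u,v)=\sum_k\big\langle\!\int_{t_{k-1}}^{t_k}f,\,v(t_{k-1})\big\rangle=\int_I\langle f,v_\pi\rangle\,dt$ exactly, and dominated convergence handles the limit directly.
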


As mentioned in the introduction, the adapted solution space $X^{s,r}_T$ for $u$ solving \eqref{EQ} is the replacement of the solution space $D^W(H^s)$ used by Chouk and Gubinelli in the context of modulated Schr\"odinger and KdV equations; see \cite[Definition 2.2]{CG1}. 
\begin{definition}\label{DEF:UpVpW}
\rm
Let $s\in\R$, $r\in[1;\infty]$, and $1 \leq p <\infty$. We define
\begin{align*}
X^{s,p,r}(I) = \big\{u:I\times\M\to\K : e^{W_t\L}u\in U^p(I)\F L^{s,r}(\M)\big\}
\end{align*}
and 
\begin{align*}
Y^{s,p,r}(I) = \big\{ u: I \times \M \to \K : e^{W_t\L} u \in V^p(I) \F L^{s,r}(\M) \big\}
\end{align*}
with norms given by
\begin{align*}
\|u\|_{X^{s,p,r}(I)}=\|e^{W_t\L}u\|_{U^p(I)\F L^{s,r}}\text{ and }\|u\|_{Y^{s,p,r}(I)}=\|e^{W_t\L}u\|_{V^p(I)\F L^{s,r}}.
\end{align*}

%
%
When $I=[0;T)$ for some $T>0$, we simply write $X^{s,p,r}(I)=X^{s,p,r}_T$ and $Y^{s,p,r}(I)=Y^{s,p,r}_T$. 

Finally, when $p=2$, we simply write $X^{s,r}_T$ and $Y^{s,r}_T$, and when $r=2$, we just write $X^s_T$, $Y^s_T$.
\end{definition}

\begin{remark}
\rm
Since $U^2\F L^{s,r}(\M)\subset L^\infty(\R;\F L^{s,r}(\M))$ continuously, we have that for any $T>0$, $C([0;T);\F L^{s,r}(\M))\cap X^{s,r}_T $ is a closed subspace of $C([0;T);\F L^{s,r}(\M))$.
\end{remark}

\begin{proposition}\label{PROP:UpVp}
Let $s\in\R$ and $r\in[1;\infty]$. Then the following hold:\\
\textup{(i)} We have the continuous embeddings
\begin{align*}
X^{s,r}(I)\subset Y^{s,r}(I)\subset X^{s,p,r}(I)
\end{align*}
for any $2<p<\infty$.

\noi
\textup{(ii)} For any $T>0$ and $\phi\in \F L^{s,r}(\M)$, we have $e^{-(W_t-W_0)\L}\phi\in X^{s,r}_T$ and
\begin{align*}
\|e^{-(W_t-W_0)\L}\phi\|_{X^{s,r}_T}\le \|\phi\|_{\F L^{s,r}}.
\end{align*}

\noi
\textup{(iii)} For any $T>0$ and $F\in L^1([0;T);\F L^{s,r}(\M))$, we have $t\mapsto \ind_{[0;\infty)}(t)\int_0^te^{-(W_t-W_s)\L}F(s)ds\in X^{s,r}_T$ and
\begin{align*}
\Big\|\int_0^te^{-(W_t-W_s)\L}F(s)ds\Big\|_{X^{s,r}_T}\le \sup_{\substack{w\in Y^{-s,r'}_T\\\|w\|_{Y^{-s,r'}_T}\le 1}}\Big|\int_0^T\int_\M F(t,x)\cj w(t,x)dxdt\Big|.
\end{align*}

\noi
\textup{(iv)} (Transference principle) If for some spatial function space $X$, some $s\in\R$, $r\in[1;\infty]$, $p\in[1;\infty)$, $\iota\in\{\pm\}$, and any $T\in(0;1]$, there is $C_{X,s,r,p,\iota}(T)>0$ non-decreasing in $T$ such that
\begin{align*}
\|e^{-W_t\L}\phi\|_{S^{p,\iota}_T(X)}\le  C_{X,s,r,p,\iota}(T)\|\phi\|_{\F L^{s,r}}
\end{align*}
holds for any $\phi\in\F L^{s,r}(\M)$, then
\begin{align}\label{transference}
\|u\|_{S^{p,\iota}_T(X)}\le C_{X,s,r,p,\iota}(T)\|u\|_{U^p_{W\L}([0;T))\F L^{s,r}}
\end{align}
for any $u\in U^p_{W\L}([0;T);\F L^{s,r}(\M))$.
\end{proposition}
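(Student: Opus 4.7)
The overall plan is to transfer properties of the abstract $U^p/V^p$ spaces from Proposition~\ref{PROP:UpVp0} and the duality Proposition~\ref{PROP:duality} to $X^{s,p,r}/Y^{s,p,r}$ via the isometric conjugation $u\mapsto v:=e^{W_t\L}u$. The key structural remark is that in both settings \ref{A1} and \ref{A1*}, $\L$ is skew-adjoint and acts on $\F L^{s,r}(\M)$ as a Fourier multiplier with unimodular symbol $e^{-i\sigma\varphi(\xi)}$ (respectively $e^{-i\sigma\lambda_\xi}$), so $e^{-\sigma\L}$ is an isometry on $\F L^{s,r}(\M)$ for every $\sigma,s\in\R$, $r\in[1;\infty]$. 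By definition, this gives $\|u\|_{X^{s,p,r}_T}=\|v\|_{U^p\F L^{s,r}}$ and $\|u\|_{Y^{s,p,r}_T}=\|v\|_{V^p\F L^{s,r}}$.

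For (i), I would obtain the chain $X^{s,r}\subset Y^{s,r}\subset X^{s,p,r}$ directly from Proposition~\ref{PROP:UpVp0}(iii)--(iv) applied to the conjugated function $v$. For (ii), conjugating $u(t)=e^{-(W_t-W_0)\L}\phi$ produces the constant $v(t)\equiv e^{W_0\L}\phi$, which is a single step $U^2$-atom (up to the left-endpoint extension convention of Definition~\ref{DEF:UpVp}(ii)) of norm $\|e^{W_0\L}\phi\|_{\F L^{s,r}}=\|\phi\|_{\F L^{s,r}}$, so the bound is immediate. For (iv), I would use the atomic definition of $U^p$ to reduce the claim to a single atom $A=\sum_{k=1}^K\mathbf{1}_{[t_{k-1},t_k)}\phi_k$ with $\sum_k\|\phi_k\|_{\F L^{s,r}}^p\le 1$. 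Since $e^{-W_t\L}A(t)=\sum_k\mathbf{1}_{[t_{k-1},t_k)}(t)\,e^{-W_t\L}\phi_k$ is supported on pairwise disjoint time intervals and both $S^{p,+}_T=L^p_tX$ and $S^{p,-}_T=X(L^p_t)$ are $L^p$ in time, the pieces combine as an $\ell^p$-sum. Applying the hypothesized linear estimate on each piece, with $C_{X,s,r,p,\iota}(t_k-t_{k-1})\le C_{X,s,r,p,\iota}(T)$ by monotonicity, gives
\begin{align*}
\|e^{-W_t\L}A\|_{S^{p,\iota}_T(X)}^p\le C_{X,s,r,p,\iota}(T)^p\sum_k\|\phi_k\|_{\F L^{s,r}}^p\le C_{X,s,r,p,\iota}(T)^p,
\end{align*}
and a triangle inequality on the atomic decomposition together with the infimum defining $\|\cdot\|_{U^p_{W\L}\F L^{s,r}}$ yields \eqref{transference}.

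The heart of the argument is (iii), where Proposition~\ref{PROP:duality} is the driving tool. The plan is to set $G(t):=\int_0^te^{W_\tau\L}F(\tau)d\tau$, which satisfies $G(0)=0$ and $\dt G=e^{W_t\L}F\in L^1([0,T);\F L^{s,r})\subset DU^2([0,T))\F L^{s,r}$, and to observe that the Duhamel integral in (iii) is exactly $e^{-W_t\L}G(t)$, whose $X^{s,r}_T$-norm equals $\|G\|_{U^2([0,T))\F L^{s,r}}$. Proposition~\ref{PROP:duality} then identifies this (after the appropriate endpoint extension of $G$) with
\begin{align*}
\sup_{\|v\|_{V^2([0,T))\F L^{-s,r'}}\le 1}\Big|\int_0^T\int_\M e^{W_t\L}F(t,x)\,\cj{v(t,x)}\,dxdt\Big|.
\end{align*}
Using $(e^{W_t\L})^*=e^{-W_t\L}$, I would rewrite the pairing as $\int_0^T\langle F(t),w(t)\rangle_{L^2}\,dt$ with $w(t):=e^{-W_t\L}v(t)$, and note that $\|w\|_{Y^{-s,r'}_T}=\|v\|_{V^2\F L^{-s,r'}}$. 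The supremum over $v$ translates into a supremum over $w\in Y^{-s,r'}_T$ of unit norm, producing the claimed inequality.

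The main technical obstacle will not be conceptual but bookkeeping: checking that the vanishing/extension conventions encoded in Definition~\ref{DEF:UpVp} and in Proposition~\ref{PROP:UpVp0}(ii) are respected so that $G$ genuinely represents an element of $U^2$ on $[0,T)$ and so that the $V^2$-side of the duality is exactly parameterized by $Y^{-s,r'}_T$ after the change of variables $v\leftrightarrow w$. These are routine adaptations of the standard $U^p/V^p$ machinery as developed in \cite{KT05,HHK,HTT,KTV,CHT}.
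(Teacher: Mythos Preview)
Your proposal is correct and follows essentially the same route as the paper: conjugation by $e^{W_t\L}$ to reduce (i)--(ii) to the abstract $U^p/V^p$ embeddings, the duality Proposition~\ref{PROP:duality} applied to $G(t)=\int_0^t e^{W_\tau\L}F(\tau)\,d\tau$ for (iii), and the atomic decomposition with disjoint-interval $\ell^p$-summation for (iv). One small remark on (iv): rather than invoking $C_{X,s,r,p,\iota}(t_k-t_{k-1})$ on subintervals (which would require a time-shifted version of the hypothesis that is not assumed), the paper simply bounds $\int_{t_{k-1}}^{t_k}\|e^{-W_t\L}\phi_k\|_X^p\,dt\le\int_0^T\|e^{-W_t\L}\phi_k\|_X^p\,dt\le C_{X,s,r,p,\iota}(T)^p\|\phi_k\|_{\F L^{s,r}}^p$; this is the cleaner way to close the estimate.
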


\begin{proof}
The proof of (i) is straightforward from the definition of $X^{s,r}_T$ and $Y^{s,r}_T$ together with the embedding $V^2(I)\subset U^p(I)$ (Proposition~\ref{PROP:UpVp0}~(iii)).

Similarly, (ii) follows directly from the definition of the space $X^{s,r}_T$.

The property (iii) is fundamental and established e.g. in \cite{HHK,HTT} for Hilbert-valued functions of $t$. Here we give a straightforward adaptation of the proof to the case of $\F L^{s,r}(\M)$-valued function. 
Indeed, extending ${\displaystyle t\mapsto \int_0^te^{W_s\L}F(s)ds}$ by ${\displaystyle \int_0^Te^{W_s\L}F(s)ds}$ for $t\ge T$, since $t\mapsto e^{W_t\L}F(t)\in L^1([0;T];\F L^{s,r}(\M))$, we have by a $\F L^{s,r}(\M)$-valued Riemann sum approximation that ${\displaystyle t\mapsto \int_0^te^{W_s\L}F(s)ds}$ is in $U^2([0;T))\F L^{s,r}(\M)$, and so $e^{W_t\L}F(t)\in DU^2([0;T))\F L^{s,r}(\M)$ with ${\displaystyle e^{W_t\L}F(t)=\dt\Big(\int_0^te^{W_s\L}F(s)ds\Big)}$. This shows that
$$\Big\|\int_0^te^{-(W_t-W_s)\L}F(s)ds\Big\|_{X^{s,r}_T}=\big\|e^{W_t\L}F\big\|_{DU^2([0;T))\F L^{s,r}},$$
and the conclusion of (iii) follows from Proposition~\ref{PROP:duality}.

As for (iv), we have from the atomic structure of $U^p\F L^{s,r}(\M)$ (Definition~\ref{DEF:UpVp}) and the definition of $U^p_{W\L}\F L^{s,r}(\M)$ (Definition~\ref{DEF:UpVpW}) that if
\begin{align*}
u=\sum_{j=0}^\infty\lambda_ja_j\text{ with }a_j(t)=\sum_{k=1}^{K_j}\ind_{[t_{j,k-1};t_{j,k})}e^{-W_t\L}\phi_{j,k}
\end{align*}
for some $\{t_{j,k}\}_{k=0}^{K_j-1}\in\ZZ(T)$ and $\{\phi_{j,k}\}_{k=1}^{K_j}\in \F L^{s,r}(\M)^{K_j}$ such that ${\displaystyle \sum_{k=1}^{K_j}\|\phi_{j,k}\|_{\F L^{s,r}}^p=1}$, and some $\{\lambda_j\}_j\in\ell^1(\N)$, then
\begin{align*}
\|u\|_{S^{p,+}(X)}&\le\sum_{j=0}^\infty|\lambda_j|\Big(\sum_{k=1}^{K_j}\int_{t_{j,k-1}}^{t_{j,k}}\big\| e^{-W_t\L}\phi_{j,k}\big\|_{X}^pdt\Big)^\frac1p\\
&\le C_{X,s,r,p,\iota}(T)\sum_{j=0}^\infty|\lambda_j|\Big(\sum_{k=1}^{K_j}\|\phi_{j,k}\|_{\F L^{s,r}}^p\Big)^\frac1p\\
&\le C_{X,s,r,p,\iota}(T)\sum_{j=0}^\infty|\lambda_j|.
\end{align*}
We have a similar bound if we replace $L^p([0;T];X(\M))$ by $X(\M;L^p([0;T]))$. This proves \eqref{transference} in view of Definition~\ref{DEF:UpVpW} and Definition~\ref{DEF:UpVp}~(ii).
\end{proof}

\section{Local well-posedness for general models: proof of Theorem~\ref{THM:noreg}}\label{SEC:Noreg}

We now present the proof of Theorem~\ref{THM:noreg}.

\subsection{Case (i)}
We assume that \ref{A0} holds, and that we have the space-time linear estimate \eqref{Strichartz} together with the non-linear estimate \eqref{Strichartznonlinear}. We first treat the case of Strichartz estimates, namely \eqref{Strichartz} holds for some $(p,\zeta,\iota)\in\EE$ with $\iota=+$ (recall that $p<\infty$ by assumption). This implies that for any $u_0\in L^2(\M)$, the map ${\displaystyle z\mapsto \|e^{z\L}u_0\|_{X_\zeta}^p}$ is integrable on $[0;T]$, in particular measurable and positive. Then we can use the occupation time formula \eqref{occupation} together with \ref{A0} and \eqref{Strichartz}. Indeed, letting $I_W(T):=[W_0-2\|W\|_{L^\infty_T};W_0+2\|W\|_{L^\infty_T}]$, for all $t\in[0;T]$ it holds $W_t\in I_W(T)$, we thus get that for any $(p,\zeta,+)\in\mathcal{E}$:
\begin{align*}
\big\|e^{-(W_t-W_0)\L}u_0\big\|_{L^p_TX_\zeta}^p&=\int_0^T\mathbf{1}_{|W_t-W_0|\le 2\|W\|_{L^\infty_T}}\big\|e^{-(W_t-W_0)\L}u_0\big\|_{X_\zeta}^p dt\\
&= \int_{I_W(T)}\big\|e^{-z\L}e^{W_0\L}u_0\big\|_{X_\zeta}^p d\mu_{[0;T]}(z)\\
&\le \big\|\frac{d\mu_{[0;\cdot]}}{dz}(\cdot)\big\|_{L^\infty([0;T]\times I_W(T))}\big\|e^{-z\L}u_0\big\|_{L^p_{I_W(T)}X_\zeta}^p\\
&\le \big\|\frac{d\mu_{[0;\cdot]}}{dz}(\cdot)\big\|_{L^\infty([0;T]\times I_W(T))}C_{p,\zeta,+}(|I_W(T)|)^p\|u_0\|_{L^2}^p.
\end{align*}
Similarly, if $L^p_TX_\zeta$ is replaced by $X_\zeta L^p_T$, we get for any fixed $x\in\M$
\begin{align*}
\big\|e^{-(W_t-W_0)\L}u_0(x)\big\|_{L^p_T}^p&=\int_0^T\mathbf{1}_{I_W(T)}(W_t)\big|e^{-(W_t-W_0)\L}u_0(x)\big|^p dt\\
&= \int_{I_W(T)}\big|e^{-z\L}e^{W_0\L}u_0(x)\big|^p d\mu_{[0;T]}(z)\\
&\le \big\|\frac{d\mu_{[0;\cdot]}}{dz}(\cdot)\big\|_{L^\infty([0;T]\times I_W(T))}\big\|e^{-z\L}u_0(x)\big\|_{L^p_{I_W(T)}}^p.
\end{align*}
Taking the $\frac{1}p$-th power then the $X_\zeta$-norm, together with \eqref{Strichartz} this yields
\begin{align*}
\big\|e^{-(W_t-W_0)\L}u_0(x)\big\|_{X_\zeta L^p_T}&\le \big\|\frac{d\mu_{[0;\cdot]}}{dz}(\cdot)\big\|_{L^\infty([0;T]\times I_W(T))}^\frac1p\big\|e^{-z\L}u_0(x)\big\|_{X_\zeta L^p_{I_W(T)}}\\
&\le \big\|\frac{d\mu_{[0;\cdot]}}{dz}(\cdot)\big\|_{L^\infty([0;T]\times I_W(T))}^\frac1p C_{p,\zeta,-}(|I_W(T)|)\|u_0\|_{L^2}.
\end{align*}

All in all we obtain
\begin{align}\label{StrichartzW}
\big\|\jb{D}^se^{-(W_t-W_0)\L}u_0\big\|_{S^{p,\iota}_TX_\zeta}\le C_{p,\zeta,\iota}^W(T)\|u_0\|_{H^s},
\end{align}
for all $(p,\zeta,\iota)\in\mathcal{E}$, where
\begin{align}\label{CW}
C_{p,\zeta,\iota}^W(T)=\big\|\frac{d\mu_{[0;\cdot]}}{dz}(\cdot)\big\|_{L^\infty([0;T]\times I_W(T))}^\frac1p C_{p,\zeta,\iota}(|I_W(T)|).
\end{align}

From the definition \eqref{Sb} of $\Sb^s_T$, the transference principle of Proposition~\ref{PROP:UpVp}~(iv) and the embeddings of Proposition~\ref{PROP:UpVp0}~(ii), we deduce that
\begin{align}\label{StrichartzX}
\|u\|_{\mathbf{S}^s_T}\le C_W(T)\|u\|_{U^{p_\mathcal{E}}_{W\L}(T)H^s},
\end{align}
where
\begin{align}\label{CW2}
C_W(T)=\max_{(p,\zeta,\iota)\in\EE}C_{p,\zeta,\iota}^W(T),
\end{align}
and
\begin{align*}
p_\mathcal{E}=\min \big\{p> 2,~~\exists (\zeta,\iota)\in E\times\{\pm\},~~(p,\zeta,\iota)\in\mathcal{E}\big\}.
\end{align*}
Recall that $p_\mathcal{E}>2$ from our assumption that $\mathcal{E}\subset (2;\infty)\times E\times\{\pm\}$ is finite.

From the nonlinear estimate \eqref{Strichartznonlinear}, the linear estimate \eqref{StrichartzX}, and the embeddings of Propositions~\ref{PROP:UpVp0}~(ii) and~\ref{PROP:UpVp}~(iv), we get 
\begin{align}\label{nonlineartransfer}
&\Big|\int_0^T\int_\M \big(\NN(u)-\NN(v)\big)\cj w dxdt\Big|\notag\\
&\qquad\le CT^{\eta_\EE(s)} \|u-v\|_{\mathbf{S}^s_T}\big(\|u\|_{\mathbf{S}^s_T}+\|v\|_{\mathbf{S}^s_T}\big)^r\|w\|_{\Sb^{-s}_T}\notag\\
&\qquad\le CT^{\eta_\EE(s)}C_W(T)^{r+2} \|u-v\|_{U^{p_\mathcal{E}}_{W\L}(T)H^s}\big(\|u\|_{U^{p_\mathcal{E}}_{W\L}(T)H^s}+\|v\|_{U^{p_\mathcal{E}}_{W\L}(T)H^s}\big)^{r}\|w\|_{U^{p_\mathcal{E}}_{W\L}(T)H^{-s}}\notag\\
&\qquad\le CT^{\eta_\EE(s)}C_W(T)^{r+2} \|u-v\|_{X^s_T}\big(\|u\|_{X^s_T}+\|v\|_{X^s_T}\big)^{r}\|w\|_{Y^{-s}_T}.
\end{align}
Note that in the last step we used that $p_\EE>2$ to estimate $\|w\|_{U^{p_\mathcal{E}}_{W\L}(T)H^{-s}}\les \|w\|_{Y^{-s}_T}$.

Then, for $R,T>0$, we define the ball 
$$B_R(T):=\big\{u\in C([0;T);H^s(\M))\cap X^s_T,~~\|u\|_{X^s_T}\le R\big\},$$
which is closed in $X^s_T$, hence a complete metric space when endowed with the metric inherited from $\|\cdot\|_{X^s_T}$.

Given $u_0\in H^s(\M)$, we set $R=2A\|u_0\|_{H^s}$ for some constant $A\ge \max(1,C_W(T))$ to be chosen later, and consider the map
$$\Phi_{u_0} : u\in B_R(T)\mapsto e^{-(W_t-W_0)\L}u_0 + \int_0^te^{-(W_t-W_{t'})\L}\NN(u)(t')dt'.$$

Note that from \eqref{nonlineartransfer} and the embedding $Y^{-s}_T\subset L^\infty([0;T);H^{-s}(\M))$, we have that 
\begin{align*}
\big\|\NN(u)-\NN(v)\big\|_{L^1_TH^s}&=\sup_{\|w\|_{L^\infty_TH^{-s}}\le 1}\Big|\int_0^T\int_\M \big(\NN(u)-\NN(v)\big)\cj{w}dxdt\Big|\\
&\les CT^{\eta_\EE(s)}C_W(T)^{r+2} \|u-v\|_{X^s_T}\big(\|u\|_{X^s_T}+\|v\|_{X^s_T}\big)^{r}\|w\|_{Y^{-s}_T}\\
&\les CT^{\eta_\EE(s)}C_W(T)^{r+2} \|u-v\|_{X^s_T}\big(\|u\|_{X^s_T}+\|v\|_{X^s_T}\big)^{r}.
\end{align*}
This shows that $\NN(u)-\NN(v)\in L^1([0;T];H^s(\M))$ for any $u,v\in X^s_T$. By Proposition~\ref{PROP:UpVp}~(ii) and~(iii), we thus have
\begin{align*}
\big\|\Phi_{u_0}(u)-\Phi_{v_0}(v)\big\|_{X^s_T}\le \|u_0-v_0\|_{H^s}+\sup_{\substack{w\in Y^{-s}_T\\\|w\|_{Y^{-s}_T}\le 1}}\Big|\int_0^T\int_\M \big(\NN(u)-\NN(v)\big)\cj w dxdt\Big|.
\end{align*}
Thus, with \eqref{nonlineartransfer} we find
\begin{align}\label{les_hirondelles_senvolent}
\big\|\Phi_{u_0}(u)-\Phi_{v_0}(v)\big\|_{X^s_T}&\le \|u_0-v_0\|_{H^s}+CT^{\eta_\EE(s)}C_W(T)^{r+2}(2R)^{r}\|u-v\|_{X^s_T}
\end{align}
for any $u,v\in B_R(T)$.

First, in case where the equation \eqref{EQ:deterministic} is sub-critical, we have $\eta_\EE(s)>0$, so that we can choose $A$ such that $R\ge \max(1,\|u_0\|_{H^s})$, then $T\sim R^{-\frac{r}{\eta_\EE(s)}}\in(0;1)$ and $C_W(T)\le C_W(1)$, which guarantee that $\Phi_{u_0}$ maps $B_R(T)$ to $B_R(T)$. With our choice of $R$ and $T$ as above, we deduce that $\Phi_{u_0}$ is also a contraction on $B_R(T)$, thus having a unique fixed point $u\in B_R(T)$. If $v\in C([0;T);H^s(\M))\cap X^s_T$ is another solution, with $R'=\max\big(R;\|v\|_{X^s_T}\big)$ and $\tau=\tau(R')\le T$, the argument above shows that $v\equiv u$ on $[0;\tau]$. But then iterating this argument on $[\tau;2\tau]$, etc., shows that $v\equiv u$ on the whole time interval where $u$ is defined, showing uniqueness in the whole class $C([0;T);H^s(\M))\cap X^s_T$. The estimate \eqref{les_hirondelles_senvolent} also provides the Lipschitz dependence of the flow map on the initial data.

On the other hand, for critical equations, one has $\eta_\EE(s)=0$, and thus neglecting again the term $C_W(T)$, which is only bounded under \ref{A0}, we have to take $R\ll 1$, which gives small data local well-posedness, similarly as for the deterministic equation \eqref{EQ:deterministic}. 




\subsection{Case (ii)}

If now \ref{A0*} holds instead of \ref{A0}, the same discussion as in the previous subsection still holds and leads again to \eqref{les_hirondelles_senvolent}. However, from \eqref{CW} and the fact that now ${\displaystyle T\mapsto\big\|\frac{d\mu_{[0;T]}}{dz}\big\|_{L^\infty_z}}$ is continuous and vanishes at $T=0$, we see that 
\begin{align}\label{CW3}
C_W(T)\underset{T\searrow 0}{\longrightarrow}0.
\end{align}
Thus, even if $\eta_\EE(s)=0$ in the critical case, we can choose again $A$ such that $R\ge \max(1,\|u_0\|_{H^s})$, and then $T=T(\|u_0\|_{H^s})\in(0;1)$ small enough guaranteeing again that $\Phi_{u_0}$ defines a contraction on $B_R(T)$.

Finally, if $s=0$ and if \ref{A2} also holds, then from the previous argument we have sub-critical local well-posedness in the sense that for any $u_0\in L^2(\M)$, there is $T=T(\|u_0\|_{L^2})>0$ and a unique solution $u\in C([0;T];L^2(\M))\cap X^0_T$. But since $\|u(T)\|_{L^2}=\|u_0\|_{L^2}$ from \ref{A2}, we can iterate the local well-posedness on $[T;2T]$, then $[kT;(k+1)T]$ for any $k\in\N$. This shows global well-posedness of \eqref{EQ} in this case.

\begin{remark}\label{REM:CW}
\rm
The gain \eqref{CW3} from the noise $W_t$ under assumption \ref{A0*} is similar in spirit to the use of the refined rescaling transform used to study mass-critical equations with multiplicative noise in \cite{BRZ,HRSZ}. While the latter relies heavily on stochastic properties of the noise, here this gain is entirely deterministic and only relies on the irregularity property of $W_t$.
\end{remark}

\begin{remark}\label{REM:sharp}
\rm
In the proof of Theorem~\ref{THM:noreg} above, the transfer of the linear space-time estimate \eqref{Strichartz} for the deterministic equation \eqref{EQ:deterministic} to the modulated one \eqref{EQ} simply follows from the occupation time formula \eqref{occupation}:
\begin{align*}
\int_0^T\big\|e^{-W_t\L}u\big\|_X^pdt = \int_{I_W(T)}\big\|e^{-z\L}u\big\|_X^p\frac{d\mu_{[0;T]}}{dz}(z)dz,
\end{align*}
from which we got an upper bound in the proof of Theorem~\ref{THM:noreg}.

In case $$\inf_{z\in I_W(T)}\frac{d\mu_{[0;T]}}{dz}(z)>0,$$ this also provides a \emph{lower} bound
\begin{align*}
\int_0^T\big\|e^{-W_t\L}u\big\|_X^pdt \gtrsim \int_{I_W(T)}\big\|e^{-z\L}u\big\|_X^p dz.
\end{align*}
This shows in this case that one \emph{cannot} get an improvement for the range of exponents in \eqref{Strichartz} for the modulated equation compared to the deterministic one. Note that for the standard Brownian motion, the local time at zero $\frac{d\mu_{[0;T]}}{dz}(0)=L_T(0)$ has the same law as $\frac12\max_{[0;T]}W_t$ \cite{RY}, which is a.s. positive. Thus the assumption above is satisfied for $T>0$ small enough, which shows the sharpness of Theorem~\ref{THM:noreg} in case of a Brownian modulation. In the case of the Schr\"odinger equation with white noise dispersion, this goes against the intuition that from the usual dispersion inequality ${\displaystyle \|e^{i(W_t-W_\tau)\Delta}u\|_{L^\infty_x}\les |W_t-W_\tau|^{-\frac{d}2}\|u\|_{L^1_x}}$ and the scaling $W_t-W_\tau\sim \sqrt{t-\tau}$, we would have expected a better (i.e. more integrable in 0) time decay, locally. Thus, through the usual $TT^*$ argument via Hardy-Littlewood-Sobolev inequality, we would have expected a better range of exponents $(p,q)$ for the local in time Strichartz estimate with white noise dispersion.
\end{remark}

\subsection{Proofs of Example~\ref{EX:noreg}}\label{SUBS:Example1}
In this subsection we detail how the various models presented in Example~\ref{EX:noreg} fit into the framework of Theorem~\ref{THM:noreg}.

\subsubsection{\textbf{NLS equation on $\R^d$}}
In this case, we have $\M=\R^d$, $\L=i\Delta$, and $\NN(u)=\pm i|u|^{2m}u$ with $m\in\N^*$. In particular \eqref{NLS} satisfies assumptions \ref{A1} and \ref{A2}. The deterministic NLS equation \eqref{NLS} is locally well-posed in $H^s(\R^d)$ for any $s\ge \max(s_c,0)=\max(\frac{d}{2}-\frac1m,0)$. Since the case $s>\frac{d}2$ can be treated simply using the algebra property of $H^s(\R^d)$ (i.e. there is no need for Strichartz estimates in this case), we focus on the case $\max(s_c,0)\le s<\frac{d}2$. We mainly follow the arguments in \cite{Caz,LP} for the sub-critical case, and \cite{CW} for the critical case.

Then \eqref{Strichartz} corresponds to the Strichartz estimate (see e.g. \cite[Theorem 2.3.3]{Caz} or \cite[Theorem 4.2]{LP})
\begin{align}\label{StrichartzNLS}
\big\|e^{it\Delta}u_0\big\|_{L^p_tL^q_x}\le c\|u_0\|_{L^2}
\end{align}
for any $p,q\in[2;\infty]$ satisfying the admissibility condition
\begin{align}\label{admissibility}
\frac{2}{p}+\frac{d}{q}=\frac{d}{2} \text{ and }
\begin{cases} 
q\in[2;\infty],~~d=1;\\
q\in[2;\infty),~~d=2;\\
q\in[2;\frac{2d}{d-2}),~~d\ge 3.
\end{cases}
\end{align}
Namely, we have $E=[2;\infty]$, $X_q=L^q_x$ for $q\in E$, so that $S^{p,+}_TX=L^p_TL^q_x$ and $\wt\EE=\{(p,q,+),~~(p,q)\text{ admissible}\}$. Actually, to deal with the scaling critical case, we use the refined space 
\begin{align}\label{SpqRd}
X_q(\R^d)=B^{0}_{q,2}(\R^d),
\end{align}
so that \eqref{Strichartz} still follows from \eqref{StrichartzNLS} in this case:
\begin{align*}
\big\|e^{it\Delta}u_0\big\|_{S^{p,+}_TX_q} &= \big\|e^{it\Delta}\P_N u_0\big\|_{L^p_T\ell^2_N L^q_x}\\
&\le \big\|e^{it\Delta}\P_N u_0\big\|_{\ell^2_N L^p_TL^q_x}\\
&\le c\|\P_N u_0\|_{\ell^2_N L^2_x}\sim \|u_0\|_{L^2_x},
\end{align*}
where we used Minkowski's inequality in the second step. Since we assumed $\EE$ to be finite in Theorem~\ref{THM:noreg}, we will restrict $\wt\EE$ to a suitable finite set $\EE$ in the following.

Since $\NN$ is multilinear in our case (recall that $m\in\N^*$), showing \eqref{Strichartznonlinear} boils down to estimating
\begin{align*}
\int_0^T\int_{\R^d}\prod_{j= 0}^{2m+1}u_j^{\pm_j}dxdt = \int_0^T\int_{\R^d}\sum_{N_0,...,N_{2m+1}}\prod_{j=0}^{2m+1}\P_{N_j}u_j^{\pm_j}dxdt
\end{align*}
for any $u_0\in\Sb^{-s}_T$ and $u_j\in\Sb^{s}_T$, $j=1,\dots,2m+1$, and any choice of signs $\pm_j$. Then, since our analysis does not depend on the signs $\pm_j$, by symmetry we can assume that $N_1=\max(N_1,\dots,N_{2m=1})$.
Note that from the assumption on $N_1$ and the convolution constraint when writing the product on the Fourier side using Plancherel theorem, we get the conditions $N_0\les N_1$ and $N_1\sim \max(N_0,\max(N_2,\dots,N_{2m+1}))$. In particular, if $N_0\ge \max(N_2,\dots,N_{2m+1})$, \eqref{Strichartznonlinear} is reduced to estimating
\begin{align}\label{multiRd0}
\int_0^T\int_{\R^d}\sum_{N_1\sim N_0}\Big(\prod_{j=0}^1\P_{N_j}u_j^{\pm_j}\Big)\Big(\prod_{j=2}^{2m+1}\P_{\le N_0}u_j^{\pm_j}\Big)dxdt,
\end{align}
while in the case $N_0\le \max(N_2,\dots,N_{2m+1})$ need to estimate
\begin{align}\label{multiRd1}
\int_0^T\int_{\R^d}\sum_{N_1\sim N_2}\Big(\prod_{j=1}^2\P_{N_j}u_j^{\pm_j}\Big)\Big(\P_{\le N_2}u_0^{\pm_0}\prod_{j=3}^{2m+1}\P_{\le N_2}u_j^{\pm_j}\Big)dxdt.
\end{align}

We first treat the case $s>\max(s_c,0)$. In this case, taking $0<\delta<s-s_c<\frac1m$, we simply restrict $\wt\EE$ to
\begin{align*}
\EE = \Big\{\big(\frac{2m}{1-m\delta},\frac{2dm}{dm-2+2m\delta},+\big)\Big\},
\end{align*}
so that ${\displaystyle \frac{d}{q}=s_c+\delta<s}$ and $p>2m$. Then in the case $N_0\ge N_2$, \eqref{Strichartznonlinear} follows from estimating \eqref{multiRd0} using H\"older, Sobolev  and Cauchy-Schwarz inequalities:
\begin{align*}
&\Big|\int_0^T\int_{\R^d}\sum_{N_0\sim N_1}\Big(\prod_{j=0}^1\P_{N_j}u_j^{\pm_j}\Big)\Big(\prod_{j=2}^{2m+1}\P_{\le N_0}u_j^{\pm_j}\Big)dxdt\Big|\\
&\qquad\le \Big\|\sum_{N_0\sim N_1}\Big(\prod_{j=0}^1\big\|\P_{N_j}u_j\big\|_{L^2_x}\Big)\Big(\prod_{j=2}^{2m+1}\big\|\P_{\le N_0}u_j\big\|_{L^\infty_x}\Big)\Big\|_{L^1_T}\\
&\qquad\les \Big\|\|u_0\|_{H^{-s}_x}\|u_1\|_{H^s_x}\prod_{j=2}^{2m+1}\|u_j\|_{W^{s,q}_x}\Big\|_{L^1_T}\\
&\qquad\les \|u_0\|_{L^\infty_T H^{-s}_x}\|u_1\|_{L^\infty_T H^s_x}\prod_{j=2}^{2m+1}\|u_j\|_{L^{2m}_TW^{s,q}_x}\\
&\qquad\les T^{m\delta}\|u_0\|_{L^\infty_T H^{-s}_x}\|u_1\|_{L^\infty_T H^s_x}\prod_{j=2}^{2m+1}\|u_j\|_{L^{p}_TB^{s}_{q,2}},
\end{align*}
where $(p,q,+)\in\EE$, and in the last step we used H\"older's inequality and Lemma~\ref{LEM:Besov}. Similarly, \eqref{multiRd1} is estimated by
\begin{align*}
&\Big|\int_0^T\int_{\R^d}\sum_{N_1\sim N_2}\Big(\prod_{j=1}^2\P_{N_j}u_j^{\pm_j}\Big)\Big(\P_{\le N_2}u_0^{\pm_0}\prod_{j=3}^{2m+1}\P_{\le N_2}u_j^{\pm_j}\Big)dxdt\Big|\\
&\qquad\les \Big\|\sum_{N_1\sim N_2}\|\P_{N_1}u_1\|_{L^2_x}\|\P_{N_2}u_2\|_{L^\infty_x}N_2^{s}\|\P_{\le N_2}u_0\|_{H^{-s}}\prod_{j=3}^{2m+1}\|u_j\|_{L^\infty_x}\Big\|_{L^1_T}\\
&\qquad\les \Big\|\sum_{N_1\sim N_2}N_1^s\|\P_{N_1}u_1\|_{L^2_x}N_2^{s}\|\P_{N_2}u_2\|_{L^q_x}\|u_0\|_{L^\infty_T H^{-s}}\prod_{j=3}^{2m+1}\|u_j\|_{W^{s,q}_x}\Big\|_{L^1_T}\\
&\qquad\les T^{m\delta}\|u_1\|_{L^\infty_TH^s_x}\|u_0\|_{L^\infty_T H^{-s}}\prod_{j=2}^{2m+1}\|u_j\|_{L^{p}_T B^{s}_{q,2}}.
\end{align*} 
This shows \eqref{Strichartznonlinear} with $\eta_\EE(s)=m\delta$ in view of the definitions \eqref{Spq} and \eqref{SpqRd} of $S^{p,+}_TX_q$ and $X_q$, and \eqref{Sb} of $\Sb^s_T$.

Next, we treat the critical case $s=s_c=\frac{d}{2}-\frac1m\ge 0$, and take instead
\begin{align*}
\EE=\Big\{\big(2m+2,\frac{d(2m+2)}{d(m+1)-2},+\big)\Big\}
\end{align*}
so that for $(p,q,+)\in\EE$, $(p,q)$ is admissible. Let also $\wt q = dm(m+1)$, so that
$$d\big(\frac1q-\frac1{\wt q}\big)=s_c\qquad\text{and}\qquad \frac2q+\frac{2m}{\wt q}=1.$$
Then we estimate \eqref{multiRd0} by
\begin{align*}
&\Big|\int_0^T\int_{\R^d}\sum_{N_0\sim N_1}\Big(\prod_{j=0}^1\P_{N_j}u_j^{\pm_j}\Big)\Big(\prod_{j=2}^{2m+1}\P_{\le N_0}u_j^{\pm_j}\Big)dxdt\Big|\\
&\qquad\le \Big\|\sum_{N_0\sim N_1}\Big(\prod_{j=0}^1\big\|\P_{N_j}u_j\big\|_{L^q_x}\Big)\Big(\prod_{j=2}^{2m+1}\big\|\P_{\le N_0}u_j\big\|_{L^{\wt q}_x}\Big)\Big\|_{L^1_T}\\
&\qquad\les \Big\|\sum_{N_0\sim N_1}\Big(\prod_{j=0}^1\big\|\P_{N_j}u_j\big\|_{L^q_x}\Big)\Big(\prod_{j=2}^{2m+1}\big\|u_j\big\|_{B^{s}_{q,2}}\Big)\Big\|_{L^1_T}\\
&\qquad\les \|u_0\|_{L^p_TB^{-s}_{q,2}}\prod_{j=1}^{2m+1}\big\|u_j\big\|_{L^p_TB^{s}_{q,2}}.
\end{align*}
We estimate \eqref{multiRd1} similarly by
\begin{align*}
&\Big|\int_0^T\int_{\R^d}\sum_{N_1\sim N_2}\Big(\prod_{j=1}^2\P_{N_j}u_j^{\pm_j}\Big)\Big(\P_{\le N_2}u_0^{\pm_0}\prod_{j=3}^{2m+1}\P_{\le N_2}u_j^{\pm_j}\Big)dxdt\Big|\\
&\qquad\les \Big\|\sum_{N_1\sim N_2}\|\P_{N_1}u_1\|_{L^q_x}\|\P_{N_2}u_2\|_{L^{\wt q}_x}\|\P_{\le N_2}u_0\|_{L^q_x}\prod_{j=3}^{2m+1}\|u_j\|_{L^{\wt q}_x}\Big\|_{L^1_T}\\
&\qquad\les \Big\|\sum_{N_1\sim N_2}N_1^s\|\P_{N_1}u_1\|_{L^q_x}N_2^s\|\P_{N_2}u_2\|_{L^{q}_x}\|u_0\|_{B^{-s}_{q,2}}\prod_{j=3}^{2m+1}\|u_j\|_{B^{s}_{q,2}}\Big\|_{L^1_T}\\
&\qquad\les  \|u_0\|_{L^p_TB^{-s}_{q,2}}\prod_{j=1}^{2m+1}\big\|u_j\big\|_{L^p_TB^{s}_{q,2}}.
\end{align*}
This finally shows \eqref{Strichartznonlinear} in the case $s=s_c$, with $\eta_\EE(s_c)=0$.

The only remaining case is $s\ge 0 >s_c$, which corresponds only to the $1d$-cubic case, i.e. $d=m=1$. In this case we take $\EE=
\{(8,4,+)\}$ since $(8,4)$ is admissible, and estimate exactly as above
\begin{align*}
&\Big|\int_0^T\int_{\R^d}\sum_{N_0\sim N_1}\Big(\prod_{j=0}^1\P_{N_j}u_j^{\pm_j}\Big)\Big(\prod_{j=2}^{3}\P_{\le N_0}u_j^{\pm_j}\Big)dxdt\Big|\\
&\qquad\les \Big\|\sum_{N_0\sim N_1}N_0^{-s}N_1^s\Big(\prod_{j=0}^1\big\|\P_{N_j}u_j\big\|_{L^4_x}\Big)\Big(\prod_{j=2}^{3}\big\|u_j\big\|_{L^{4}_x}\Big)\Big\|_{L^1_T}\\
&\qquad\les T^{\frac12}\|u_0\|_{L^8_TB^{-s}_{4,2}}\|u_1\|_{L^8_TB^{s}_{4,2}}\prod_{j=2}^3\|u_j\|_{L^8_TL^4_x},
\end{align*}
and similarly
\begin{align*}
&\Big|\int_0^T\int_{\R^d}\sum_{N_1\sim N_2}\Big(\prod_{j=1}^2\P_{N_j}u_j^{\pm_j}\Big)\Big(\P_{\le N_2}u_0^{\pm_0}\P_{\le N_2}u_4^{\pm_4}\Big)dxdt\Big|\\
&\qquad\les \Big\|\sum_{N_1\sim N_2}\|\P_{N_1}u_1\|_{L^4_x}\|\P_{N_2}u_2\|_{L^{4}_x}\|\P_{\le N_2}u_0\|_{L^4_x}\|u_4\|_{L^{4}_x}\Big\|_{L^1_T}\\
&\qquad\les \Big\|\sum_{N_1\sim N_2}N_1^s\|\P_{N_1}u_1\|_{L^4_x}N_2^s\|\P_{N_2}u_2\|_{L^{4}_x}\|u_0\|_{B^{-s}_{4,2}}\|u_4\|_{L^4_x}\Big\|_{L^1_T}\\
&\qquad\les  T^{\frac12}\|u_0\|_{L^8_TB^{-s}_{4,2}}\prod_{j=1}^{3}\big\|u_j\big\|_{L^8_TB^{s}_{4,2}}.
\end{align*}

\subsubsection{\textbf{NLS equation on $\T^d$}}\label{SUBSUB:NLS}
We now consider the case $\M=\T^d$, $\L=i\Delta$, and $\NN(u)=\pm i|u|^{2m}u$ with $m\in\N^*$. We have again $s_c=\frac{d}{2}-\frac1m$ and \eqref{NLS} is well-posed in $H^s(\T^d)$ for any $s>\max(s_c,0)$, and for $s=\max(s_c;0)$ except in the cases $d=1,m=2$ and $d=2,m=1$ \cite{HTT,Wang}. In the following, we focus on the sub-critical case\footnote{In the $1d$-cubic case, this yields the restriction $s>0$. See Remark~\ref{REM:cubicNLS} below.} $s>\max(s_c,0)$.

In this case, \eqref{Strichartz} follows from the localized periodic Strichartz estimate
\begin{align}\label{StrichartzLoc}
\|\Pi_N e^{it\Delta}u_0\|_{L^p_{t,x}(\T\times\T^d)}\les \Lambda_N(d,p)\|\Pi_N u_0\|_{L^2_x},
\end{align}with 
\begin{align*}
\Lambda_N(d,p)=\begin{cases}1,\qquad d=1,~p=4,\\
N^\eps,\qquad 2\le p \le \frac{2(d+2)}{d},\\
N^{\frac{d}2-\frac{d+2}{p}},\qquad p>\frac{2(d+2)}{d}.
\end{cases}
\end{align*}

Using number theoretic argument, Bourgain \cite{Bo1} first established part of \eqref{StrichartzLoc} for some range of $p$ and $d$. This was then extended to the whole range in \eqref{StrichartzLoc} using the breakthrough of Bourgain and Demeter \cite{BD} regarding the decoupling conjecture, and refined in \cite{KV} in the range $p>\frac{2(d+2)}{d}$.

For $\wt N$ a dyadic integer, let $\CC(\wt N)=\{\wt n+[-\wt N;\wt N]^d\}$ be a collection of cubes of side-length $\wt N$ covering $\R^d$. Note that for $\CC=\wt n+[-\wt N;\wt N]^d\in\CC(\wt N)$, we have 
$$ \P_{\CC}u= e^{i \wt n\cdot x}\P_{[-\wt N;\wt N]^d}\big(e^{-i \wt n\cdot x}u\big)$$
and
$$\P_{\CC}\big(e^{it\Delta}u\big)(t,x)= e^{i \wt n\cdot x +t|\wt n|^2}\P_{[-\wt N;\wt N]^d}\big(e^{-i \wt n\cdot x}u\big)(t,x-2t\wt n),$$
so that \eqref{StrichartzLoc} also gives for any $N,\wt N$ dyadic integers and $\CC\in\CC(\wt N)$:
\begin{align}\label{StrichartzLoc2}
\|\P_{\CC} \P_N e^{it\Delta}u_0\|_{L^p_{t,x}(\T\times\T^d)}\les \Lambda_{\wt N \wedge N}(d,p)\|\P_{\CC}\P_N u_0\|_{L^2_x}.
\end{align}
Thus, for $p\ge 2$ and $0<\eps\ll 1$, letting
$$\sigma(p)=\begin{cases}
2\eps,& 2\le p\le \frac{2(d+2)}{d},\\
\frac{d}{2}-\frac{d+2}{p}+\eps,& p>\frac{2(d+2)}{d},
\end{cases}$$
and using Minkowski's inequality and \eqref{StrichartzLoc2}, we get that for any $p\ge 2$ and $T\in (0;2\pi)$:
\begin{align}\label{StrichartzLoc3}
&\big\|\mathbf{1}_{\wt N\les N}\wt N^{-\sigma(p)}\P_\CC \P_N e^{it\Delta}u_0\big\|_{L^p_T\ell^2_N\ell^1_{\wt N}\ell^2(\CC(\wt N))L^p_x}\notag\\
&\qquad \les\big\|\mathbf{1}_{\wt N\les N}\wt N^{-\sigma(p)}\P_\CC \P_N e^{it\Delta}u_0\big\|_{\ell^1_{\wt N}\ell^2_N\ell^2(\CC(\wt N))L^p_{T,x}}\notag\\
&\qquad\les\big\|\mathbf{1}_{\wt N\les N}\wt N^{-\eps}\P_\CC \P_N u_0\big\|_{\ell^1_{\wt N}\ell^2_N\ell^2(\CC(\wt N))L^2_x}\notag\\
&\qquad\sim \sum_{\wt N}\wt N^{-\eps} \|\P_{\gtrsim \wt N}u_0\|_{L^2} \les \|u_0\|_{L^2}.
\end{align}
To capture the refinement \eqref{StrichartzLoc2}, we thus define the spaces $X_\zeta$ with a multiscale structure:
\begin{align}\label{XNLSper}
\|u\|_{X_p}:=\big\|\mathbf{1}_{\wt N\les N}\wt N^{-\sigma(p)}\P_\CC \P_N u\big\|_{\ell^2_N\ell^1_{\wt N}\ell^2(\CC(\wt N))L^p_x},
\end{align}
so that \eqref{Strichartz} follows from \eqref{StrichartzLoc3} for any $(p,q,+)\in\wt \EE$, with
\begin{align*}
\wt \EE:=\big\{(p,p,+),~~p\in [2;\infty]\big\}.
\end{align*}
Since we have $s>\max(s_c,0)$, taking $0<\eps\ll s-\max(s_c,0)$, we set $p=(d+2)m+\eps$ and ${\displaystyle q=\frac{2(d+2)}{d}}$ so that ${\displaystyle\frac{2}{q}+\frac{2m}{(d+2)m}=1}$. Then with these choices of $p$ and $q$ we restrict $\wt\EE$ to
$$\EE=\big\{(p,p,+),(q,q,+)\big\}.$$
Note that with this choice, we have $p> \frac{2(d+2)}{d}$ unless $d=m=1$, and $q\le\frac{2(d+2)}{d}$, and that in all cases $\sigma(q)=2\eps$ and $\sigma(p)=\max(s_c,0)+O(\eps)$.

To establish \eqref{Strichartznonlinear}, we decompose again
\begin{align*}
\int_0^T\int_{\T^d}\prod_{j= 0}^{2m+1}u_j^{\pm_j}dxdt = \int_0^T\int_{\T^d}\sum_{N_0,...,N_{2m+1}}\prod_{j=0}^{2m+1}\P_{N_j}u_j^{\pm_j}dxdt
\end{align*}
with now $N_1\ge N_2\ge\dots\ge N_{2m+1}$ by symmetry. We have again $N_0\les N_1$ and $N_1\sim \max(N_0,N_2)$. In this case, we make a further decomposition: for example, if $N_0\ge N_2$, since $ \P_{N_0}u_0$ has its Fourier support in $\{|n|\sim N_1\}$, we can cover this region by $O(N_1^dN_2^{-d})$ cubes $\CC_{\ell}\in\CC(N_2)$, $|\ell|\les N_1^dN_2^{-d}$, of side length $N_2$ and centred around some $|n_\ell|\sim N_1$. This leads to the decomposition
\begin{align*}
\sum_{N_0\sim N_1\ge N_2\ge ...\ge N_{2m+1}}\sum_{\CC_\ell,\CC_{\wt\ell}\in\CC(N_2)}\int_0^T\int_{\T^d}\P_{\CC_\ell}\P_{N_0}u_0^{\pm_0}\P_{\CC_{\wt \ell}}\P_{N_1}u_1^{\pm_1}\prod_{j=2}^{2m+1}\P_{N_j}u_j^{\pm_j}dxdt.
\end{align*}
Now, using again the convolution constraint when writing the product on the Fourier side, we see that for any fixed $|\ell|\les N_1^dN_2^{-d}$, there are at most $O(1)$ values of $\wt \ell$ such that the Fourier supports of $\P_{\CC_\ell}\P_{N_0}u_0^{\pm_0}$ and $\P_{\CC_{\wt \ell}}\P_{N_1}u_1^{\pm_1}\prod_{j=2}^{2m+1}\P_{N_j}u_j^{\pm_j}$ are over-lapping, i.e. the integral on $\T^d$ is non-zero. Namely, similarly as in \eqref{multiRd0}-\eqref{multiRd1}, we are left with estimating
\begin{align}\label{multiTd0}
\sum_{N_0\sim N_1\ge N_2\ge...\ge N_{2m+1}}\sum_{\substack{\CC_\ell,\CC_{\wt\ell}\in\CC(N_2)\\|\wt \ell-\ell|\les 1}}\int_0^T\int_{\T^d}\P_{\CC_\ell}\P_{N_0}u_0^{\pm_0}\P_{\CC_{\wt \ell}}\P_{N_1}u_1^{\pm_1}\prod_{j=2}^{2m+1}\P_{N_j}u_j^{\pm_j}dxdt
\end{align}
and
\begin{align}\label{multiTd1}
\sum_{N_2\sim N_1\ge N_0\ge...\ge N_{2m+1}}\sum_{\substack{\CC_\ell,\CC_{\wt\ell}\in\CC(N_0)\\|\wt\ell-\ell|\les 1}}\int_0^T\int_{\T^d}\P_{\CC_\ell}\P_{N_2}u_2^{\pm_2}\P_{\CC_{\wt \ell}}\P_{N_1}u_1^{\pm_1}\prod_{j=0,2}^{2m+1}\P_{N_j}u_j^{\pm_j}dxdt.
\end{align}

For $T\in(0;2\pi)$, we then estimate \eqref{multiTd0} with the help of H\"older' inequality:
\begin{align*}
&\Big|\sum_{N_0\sim N_1\ge N_2\ge...\ge N_{2m+1}}\sum_{\substack{\CC_\ell,\CC_{\wt\ell}\in\CC(N_2)\\|\wt \ell-\ell|\les 1}}\int_{\T^d}\P_{\CC_\ell(N_2)}\P_{N_0}u_0^{\pm_0}\P_{\CC_{\wt \ell}(N_2)}\P_{N_1}u_1^{\pm_1}\prod_{j=2}^{2m+1}\P_{N_j}u_j^{\pm_j}dx\Big|\\
&\qquad\les\sum_{N_0\sim N_1\ge N_2\ge...\ge N_{2m+1}}\sum_{\substack{\CC_\ell,\CC_{\wt\ell}\in\CC(N_2)\\|\wt \ell-\ell|\les 1}}\big\|\P_{\CC_\ell}\P_{N_0}u_0\big\|_{L^q_{x}}\big\|\P_{\CC_{\wt \ell}}\P_{N_1}u_1\big\|_{L^q_x}\prod_{j=2}^{2m+1}\|\P_{N_j}u_j\|_{L^{(d+2)m}_x}.
\end{align*}
Now, using H\"older's inequality with the compactness of $\T^d$ and the definition of $p$, and the definition \eqref{XNLSper} of $X_p$, we have from using Cauchy-Schwartz inequality:
\begin{align*}
\sum_{N}\|\P_{N}u\|_{L^{(d+2)m}_x}&\les \sum_{N}\|\P_{N}u\|_{L^{p}_x} \sim \big\|\mathbf{1}_{\wt N\sim N}\P_{\CC}\P_{N}u\big\|_{\ell^\infty_{\wt N}\ell^1_N\ell^2_{\CC(\wt N)}L^p_x}\\
&\les \big\|\wt N^{\sigma(p)}N^{-s}\mathbf{1}_{\wt N\les N}\wt N^{-\sigma(p)}N^s\P_{\CC}\P_{N}u\big\|_{\ell^1_{\wt N}\ell^1_N\ell^2_{\CC(\wt N)}L^p_x}\\
&\les \|\jb{D}^su\|_{X_p}\big\|\mathbf{1}_{\wt N\les N}\wt N^{\sigma(p)}N^{-s}\big\|_{\ell^\infty_{\wt N}\ell^2_N}\les \|\jb{D}^su\|_{X_p},
\end{align*}
where we used in the second step that $\#\CC(\wt N)\cap \supp\ft\P_N = O(\jb{\frac{N}{\wt N}}^d)=O(1)$ when $\wt N\sim N$, and in the last step that $\sigma(p)=\max(s_c,0)+O(\eps)<s$ with our choice of $p$ and $0<\eps \ll s-\max(s_c,0)$.

Similarly, using Cauchy-Schwarz to sum on both $N_0\sim N_1$ and $\ell,\wt\ell$, we end up with
\begin{align*}
&\Big|\sum_{N_0\sim N_1\ge N_2\ge...\ge N_{2m+1}}\sum_{\substack{\CC_\ell,\CC_{\wt\ell}\in\CC(N_2)\\|\wt \ell-\ell|\les 1}}\int_{\T^d}\P_{\CC_\ell(N_2)}\P_{N_0}u_0^{\pm_0}\P_{\CC_{\wt \ell}(N_2)}\P_{N_1}u_1^{\pm_1}\prod_{j=2}^{2m+1}\P_{N_j}u_j^{\pm_j}dx\Big|\\
&\les\Big(\prod_{j=3}^{2m+1}\|\jb{D}^su_j\|_{X_p}\Big)\sum_{N_2}\Big(N_2^{2\sigma(q)}\|\P_{N_2}u_2\|_{L^{p}_x}\\
&\qquad\qquad\times\big\|N_2^{-\sigma(q)}N_0^{-s}\P_{\CC}\P_{N_0}u_0\big\|_{\ell^2_{N_0}\ell^2(\CC(N_2))L^q_{x}}\big\|N_2^{-\sigma(q)}N_1^s\P_{\CC}\P_{N_1}u_1\big\|_{\ell^2_{N_1}\ell^2(\CC(N_2))L^q_x} \Big)\\
&\qquad\les \|\jb{D}^{-s}u_0\|_{X_q}\|\jb{D}^{s}u_1\|_{X_q}\Big(\prod_{j=3}^{2m+1}\|\jb{D}^su_j\|_{X_p}\Big)\sum_{N_2}N_2^{2\sigma(q)}\|\P_{N_2}u_2\|_{L^{p}_x}.
\end{align*}
Since, with our choice of $q$, we have $\sigma(q)=2\eps$, we can finally sum as above
\begin{align*}
\sum_{N_2}N_2^{2\sigma(q)}\|\P_{N_2}u_2\|_{L^{p}_x}&\les \|\jb{D}^su_2\|_{X_p}\big\|\mathbf{1}_{\wt N\les N_2}\wt N^{\sigma(p)}N_2^{4\eps-s}\big\|_{\ell^\infty_{\wt N}\ell^2_{N_2}}\les \|\jb{D}^su_2\|_{X_p},
\end{align*}
using again that $\sigma(p)=\max(s_c,0)+O(\eps)<s-4\eps$ with our choice of $p$ and $0<\eps \ll s-s_c$.

All in all, taking the $L^1_T$ norm of the previous computations, using H\"older's inequality with the choice of $q$ and $p$, we end up with the following estimate on \eqref{multiTd0}:
\begin{align*}
&\Big|\sum_{N_0\sim N_1\ge N_2\ge...\ge N_{2m+1}}\sum_{\substack{\CC_\ell,\CC_{\wt\ell}\in\CC(N_2)\\|\wt \ell-\ell|\les 1}}\int_0^T\int_{\T^d}\P_{\CC_\ell(N_2)}\P_{N_0}u_0^{\pm_0}\P_{\CC_{\wt \ell}(N_2)}\P_{N_1}u_1^{\pm_1}\prod_{j=2}^{2m+1}\P_{N_j}u_j^{\pm_j}dxdt\Big|\\
&\qquad\qquad\les \Big\|\|\jb{D}^{-s}u_0\|_{X_q}\|\jb{D}^{s}u_1\|_{X_q}\prod_{j=2}^{2m+1}\|\jb{D}^su_j\|_{X_p}\Big\|_{L^1_T}\\
&\qquad\qquad\les \|\jb{D}^{-s}u_0\|_{L^q_TX_q}\|\jb{D}^{s}u_1\|_{L^q_TX_q}\prod_{j=2}^{2m+1}\|\jb{D}^su_j\|_{L^{(d+2)m}_TX_p}\\
&\qquad\qquad\les T^{2m\eps}\|\jb{D}^{-s}u_0\|_{L^q_TX_q}\|\jb{D}^{s}u_1\|_{L^q_TX_q}\prod_{j=2}^{2m+1}\|\jb{D}^su_j\|_{L^{p}_TX_p}.
\end{align*}

Proceeding as above, we find a similar estimate for \eqref{multiTd1}. This finally shows \eqref{Strichartznonlinear} with $\eta_\EE(s)=2m\eps$.

\begin{remark}\label{REM:cubicNLS}
\rm
 Note that in the case $d=m=1$, the argument above only shows well-posedness of \eqref{EQ} for $s>0$, while Bourgain \cite{Bo1} proved global well-posedness in $L^2(\T)$ for \eqref{NLS} using the lossless periodic Strichartz estimate
\begin{align}\label{StrichartzL4}
\|e^{it\dx^2}u\|_{L^4_{t,x}(\T\times\T)}\les \|u\|_{L^2_x}.
\end{align}

In particular, in case $d=m=1$ and $s=0$, the estimate \eqref{Strichartz} with $(p,\zeta,\iota)=(4,4,+)$ and $X_4 = L^4_{x}$ follows from \eqref{StrichartzL4}, while \eqref{Strichartznonlinear} follows from H\"older's inequality through
\begin{align*}
\Big|\int_0^T\int_\T \prod_{j=0}^2u_j^{\pm_j}dxdt\Big|\les \prod_{j=0}^3\|u_j\|_{L^4_{T,x}}
\end{align*}
for any $T\in(0;2\pi)$, but provides only $\eta_\EE(s)=0$. This can only be used for a small data well-posedness result. For the deterministic cubic NLS in $L^2(\T)$, this is overcome in \cite{Bo1} by using a refined $X^{s,b}$ version of \eqref{StrichartzL4}, namely 
\begin{align}\label{L4Xsb}
\|u\|_{L^4_{T,x}}\les \|u\|_{X^{0,\frac38+}_T}
\end{align} 
for $T\in(0;2\pi)$, allowing to get $\eta_\EE =\frac12-$.

As far as the modulated periodic cubic NLS \eqref{EQ:deterministic} is concerned, it seems possible to capture the refined $L^4$ estimate \eqref{L4Xsb} only through the use of the assumption \ref{A0gr} for some $\gamma>\frac12$ and $\rho>\frac12$ in the spirit of the proof of Theorem~\ref{THM:NLScrit} below; see \cite[Theorem 1.8]{CG1} and the proof of Theorem~\ref{THM:NLSFL} below. However, under assumption \ref{A0*}, we can again use the smallness of $C_W(T)$ in \eqref{CW2} to recover large data global well-posedness in $L^2(\T)$. Note that both assumptions \ref{A0*} and \ref{A0gr} with $\gamma,\rho>\frac12$ barely miss the case $W_t=t$; see Remark~\ref{REM:A0}.
\end{remark}


\subsubsection{\textbf{NLS equation on compact manifolds}}
In the case where $\M$ is a general closed, boundaryless, smooth $d$-dimensional Riemannian manifold, \cite{BGT1,ST} established the Strichartz estimate
\begin{align}\label{StrichartzM}
\|e^{it\Delta}u\|_{L^p_TL^q_x}\les \|u\|_{H^\frac1p}
\end{align}
for any $T\in(0;1]$ and admissible $(p,q)$ satisfying \eqref{admissibility}. In particular, this implies \eqref{Strichartz} with $\zeta=(p,q)\in [2;\infty]^2$, $X_{p,q}=W^{-\frac1p,q}(\M)$, and
$$\wt\EE=\big\{(p,p,q,+),~~(p,q)\text{ admissible}\big\}.$$
Let $s>\frac{d}2-\frac1{2m}$. We then restrict $\wt\EE$ to $\EE=\{(p,p,q,+)\}$ where $p=2m+\eps$ and $q$ given by $\frac2p+\frac{d}{q}=\frac{d}2$, for some $0<\eps\ll 1$ so that $s>\frac{d}2-\frac1p = \frac{d}q-\frac1p$.

Then from the proof of Proposition 3.1 in \cite{BGT1}, it holds
\begin{align*}
\Big|\int_0^T\int_\M\prod_{j=0}^{2m+1}u_jdxdt\Big|&\les \|u_0\|_{L^\infty_TH^{-s}}\big\|\prod_{j=1}^{2m+1}u_j\big\|_{L^1_TH^s}\\
&\les \|u_0\|_{L^\infty_TH^{-s}}\sum_{j=1}^{2m+1}\Big\|\|u_j\|_{H^s}\prod_{\ell\neq j}\|u_j\|_{L^\infty_x}\Big\|_{L^1_T}\\
&\les \|u_0\|_{L^\infty_TH^{-s}}\sum_{j=1}^{2m+1}\|u_j\|_{L^\infty_TH^s}\prod_{\ell\neq j}\|u_j\|_{L^{2m}_TW^{s-\frac1p,q}}\\
&\les T^{2m\eps}\|u_0\|_{L^\infty_TH^{-s}}\sum_{j=1}^{2m+1}\|u_j\|_{L^\infty_TH^s}\prod_{\ell\neq j}\|u_j\|_{L^{p}_TW^{s-\frac1p,q}},
\end{align*}
where we used the fractional Leibniz rule, the Sobolev embedding $W^{s-\frac1p,q}(\M)\subset L^\infty(\M)$ from our choice of $p$ and $q$, and H\"older's inequality. This shows \eqref{Strichartznonlinear} with $\eta_\EE(s)=2m\eps$.


\subsubsection{\textbf{Mass-critical gKdV equation on $\R$}}
In the case of \eqref{gKdV} we have that \eqref{Strichartz} holds with $\zeta=(q,\theta)\in [2;\infty]\times\R$ and $X_{q,\theta}= D^{-\theta}L^q(\R)$. Indeed, from \cite[Section 7.1]{LP}, we have the local smoothing estimate 
\begin{align*}
\big\|D^\theta e^{t\dx^3}u\big\|_{L^q_xL^p_T} \les \|u\|_{L^2}
\end{align*}
 for $(p,q,\theta,\iota)\in\big\{(2,\infty,1,-);(\infty,4,-\frac14,-)\big\}$, uniformly in $T>0$. By interpolation, we deduce that it holds for any $(p,q,\theta,\iota)\in\wt{\mathcal{E}}$, where
\begin{align*}
\wt{\mathcal{E}}=\bigcap_{\delta\in (0;1)}\Big\{&(p,q,\theta,\iota)\in (2;\infty)\times (4;\infty)\times\R\times\{\pm\},\\
&\qquad\iota=-,~~p=\frac2\delta,~~q=\frac{4}{1-\delta},~~\theta = \delta -\frac{1-\delta}{4}\Big\}.
\end{align*}
Since we assumed $\mathcal{E}$ to be finite in Theorem~\ref{THM:noreg}, we restrict $\widetilde{\mathcal{E}}$ by setting
\begin{align*}
\mathcal{E}=\Big\{(\frac{10}{3},10,\frac12,-),(10,5,0,-)\Big\}.
\end{align*}
Then, we can estimate with H\"older's inequality and the fractional Leibniz rule (see \cite[Lemma 7.6]{LP}):
\begin{align*}
\Big|\int_0^T\int_\R \dx u_0\prod_{j=1}^5u_j dxdt\Big|&\les \|D^\frac12u_0\|_{L^{10}_xL^{\frac{10}3}_T}\big\|D^{-\frac12}\dx\prod_{j=1}^5u_j\big\|_{L^{\frac{10}{9}}_xL^{\frac{10}{7}}_T}\\
& \les \|D^\frac12u_0\|_{L^{10}_xL^{\frac{10}3}_T}\sum_{j=1}^5\|D^\frac12u_j\|_{L^{10}_xL^{\frac{10}3}_T}\big\|\prod_{\ell\neq j}u_\ell\big\|_{L^{\frac{5}{4}}_xL^{\frac52}_T}\\
&\les \|D^\frac12u_0\|_{L^{10}_xL^{\frac{10}3}_T}\sum_{j=1}^5\|D^\frac12u_j\|_{L^{10}_xL^{\frac{10}3}_T}\prod_{\ell\neq j}\big\|u_\ell\big\|_{L^{5}_xL^{10}_T}
\end{align*}
This shows \eqref{Strichartznonlinear} when $s=0$ for this model, with $\eta_\EE(s)=0$ in this case, showing that it is indeed a mass-critical model. The case $s>0$ follows from a straightforward adaptation.

\section{The modulated periodic NLS equation at critical regularity}\label{SEC:NLScrit}
We now turn to the proof of Theorem~\ref{THM:NLScrit}. Since, under \ref{A0gr} for $\gamma>\frac12$ and $\rho>1$, local well-posedness for $s>\max(s_c;0)=\max(\frac{d}2-\frac1m;0)$ follows from Theorem~\ref{THM:noreg} as discussed in Subsubsection~\ref{SUBSUB:NLS} and Remark~\ref{REM:A0}~(iii), we focus on the critical case $s=s_c=\frac{d}2-\frac1m>0$; recall the assumption $m>\frac2d$ in Theorem~\ref{THM:NLScrit}.

We start by recasting the Strichartz estimates \eqref{StrichartzLoc}-\eqref{StrichartzLoc2} as a multilinear restriction estimate.
\begin{lemma}\label{LEM:multilinearT31}
Let ${\displaystyle \frac{2(d+2)}{d}<p<m(d+2)}$ and $q$ such that ${\displaystyle \frac2q+\frac{2m}p=1}$. Then for any $N_0\sim N_1\ge N_2\ge\dots\ge N_{2m+1}$, any $\CC,\wt\CC\in\CC(N_2)$, and any $U_0,\dots,U_{2m+1}\in L^2(\T^d)$ with non-negative Fourier coefficients, it holds
\begin{multline*}
\sup_{\nu\in\Z}\sum_{\substack{n_0,...,n_{2m+1}\in\Z^d\\\sum_{j=0}^{2m+1}\pm_jn_j=0\\ |n_j|\sim N_j,~~n_0\in\CC_\ell,~n_1\in\CC_{\wt\ell}\\ \sum_{j=0}^{2m+1}\pm_j|n_j|^2=\nu}}\prod_{j=0}^{2m+1}\ft U_j(n_j)\\
\les \|\P_{\CC_\ell}\P_{N_0}U_0\|_{L^2}\|\P_{\CC_{\wt\ell}}\P_{N_1}U_1\|_{L^2}\\
 \times N_2^{s_c+(2m-1)\eps}\|\P_{N_2}U_2\|_{L^2}\prod_{j=3}^{2m+1}N_j^{s_c-\eps}\|\P_{N_j}U_j\|_{L^2}
\end{multline*}
with ${\displaystyle \eps=\frac{d+2}p-\frac1m>0}$.

The same estimate holds exchanging the roles of $N_0$ and $N_2$.
\end{lemma}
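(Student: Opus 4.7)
The plan is to convert the constrained multilinear sum into a space-time integral of products of linear Schrödinger evolutions, then apply Hölder in $L^{q,p}_{t,x}$ together with the cube-localized periodic Strichartz estimates \eqref{StrichartzLoc}--\eqref{StrichartzLoc2}.

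First, I would rewrite the quantity on the left-hand side using Plancherel. Letting $V_j$ denote the $L^2(\T^d)$ function with Fourier coefficients $\widehat{V_j}=\widehat{U_j}$ (so $|V_j|=V_j$ pointwise is not needed; only $\widehat{U_j}\ge 0$ is used to discard signs), the identity
\begin{align*}
\frac{1}{2\pi}\int_0^{2\pi}e^{-i\nu t}\prod_{j=0}^{2m+1}e^{\pm_j it\Delta}V_j(x)\,dt=\sum_{\substack{\sum \pm_j|n_j|^2=\nu}}\prod_{j=0}^{2m+1}\widehat{U_j}(n_j)e^{i(\sum\pm_j n_j)\cdot x}
\end{align*}
combined with an $x$-integration to enforce $\sum\pm_j n_j=0$, shows that the sum in the statement equals, up to a multiplicative constant,
\begin{align*}
\int_0^{2\pi}\int_{\T^d}e^{-i\nu t}\Big(\prod_{j=0,1}e^{\pm_j it\Delta}\P_{\CC_{(\cdot)}}\P_{N_j}V_j\Big)\Big(\prod_{j=2}^{2m+1}e^{\pm_j it\Delta}\P_{N_j}V_j\Big)dxdt,
\end{align*}
where the frequency cutoffs are inserted thanks to the supports $|n_j|\sim N_j$, $n_0\in\CC_\ell$, $n_1\in\CC_{\wt\ell}$. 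Taking $\sup_\nu$ and absolute values, we bound by the integral of the modulus of the product.

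Next, I would apply Hölder's inequality in $L^{q}_{t,x}\times L^q_{t,x}\times (L^p_{t,x})^{2m}$ with $\frac{2}{q}+\frac{2m}{p}=1$ as in the statement. This reduces the problem to estimating
\begin{align*}
\prod_{j=0,1}\big\|e^{\pm_j it\Delta}\P_{\CC_{(\cdot)}}\P_{N_j}V_j\big\|_{L^q_{t,x}}\cdot\prod_{j=2}^{2m+1}\big\|e^{\pm_j it\Delta}\P_{N_j}V_j\big\|_{L^p_{t,x}}.
\end{align*}
Here the crucial point is that since $p<m(d+2)$, the dual exponent $q=\frac{2p}{p-2m}$ satisfies $q>\frac{2(d+2)}{d}$, so the cube-localized estimate \eqref{StrichartzLoc2} with side length $N_2=\min(N_2\wedge N_0,N_2\wedge N_1)$ yields, on each of the first two factors,
\begin{align*}
\big\|e^{\pm it\Delta}\P_{\CC}\P_{N_j}V_j\big\|_{L^q_{t,x}}\les N_2^{d/2-(d+2)/q}\|\P_{\CC}\P_{N_j}V_j\|_{L^2}=N_2^{m\eps}\|\P_{\CC}\P_{N_j}U_j\|_{L^2},
\end{align*}
the last equality being by Plancherel and a short arithmetic check using $\frac{2}{q}=1-\frac{2m}{p}$ and $\eps=\frac{d+2}{p}-\frac{1}{m}$. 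For each of the remaining $2m$ factors, since $p>\frac{2(d+2)}{d}$, \eqref{StrichartzLoc} gives the loss $N_j^{d/2-(d+2)/p}=N_j^{s_c-\eps}$.

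Multiplying these bounds and collecting the $N_2$-losses, the first two contribute $N_2^{2m\eps}$ and the $j=2$ factor contributes $N_2^{s_c-\eps}$, giving a combined exponent $s_c+(2m-1)\eps$ on $N_2$; for $j\ge 3$ the loss is $N_j^{s_c-\eps}$, yielding exactly the bound claimed. The case where the roles of $N_0$ and $N_2$ are exchanged (so $N_1\sim N_2$ and the cubes have side length $N_0$) is treated identically, using \eqref{StrichartzLoc2} with $\wt N\wedge N=N_0$ on the factors indexed by $j=1,2$. The main technical point is the exponent bookkeeping in step 3; everything else is a direct transcription of Parseval and Hölder.
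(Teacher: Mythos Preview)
Your proof is correct and follows essentially the same approach as the paper: rewrite the constrained sum via Plancherel as a space-time integral on $[0,2\pi]\times\T^d$, apply H\"older with exponents $\frac{2}{q}+\frac{2m}{p}=1$, and use the cube-localized Strichartz estimate \eqref{StrichartzLoc2} at scale $N_2$ on the $j=0,1$ factors (each giving $N_2^{m\eps}$) together with \eqref{StrichartzLoc} on the remaining factors (giving $N_j^{s_c-\eps}$). The exponent bookkeeping and the check that $q>\frac{2(d+2)}{d}$ match the paper exactly.
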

\begin{proof}
We observe that for any $\nu\in\Z$,
\begin{align*}
&\sum_{\substack{n_0,...,n_{2m+1}\in\Z^d\\\sum_{j=0}^{2m+1}\pm_jn_j=0\\ |n_j|\sim N_j,~~n_0\in\CC_\ell,~n_1\in\CC_{\wt\ell}\\ \sum_{j=0}^{2m+1}\pm_j|n_j|^2=\nu}}\prod_{j=0}^{2m+1}\ft U_j(n_j)\\
&\qquad= \sum_{\substack{n_0,...,n_{2m+1}\in\Z^d\\\sum_{j=0}^{2m+1}\pm_jn_j=0\\ |n_j|\sim N_j,~~n_0\in\CC_\ell,~n_1\in\CC_{\wt\ell}}}\prod_{j=0}^{2m+1}\ft U_j(n_j)\int_0^{2\pi}e^{it\big(\sum_{j=0}^{2m+1}\pm_j|n_j|^2-\nu\big)}dt\\
&\qquad=\int_0^{2\pi}\int_{\T^d}e^{-it\nu}\big(e^{it\Delta}\P_{\CC}\P_{N_0}U_0\big)^{\pm_0}\big(e^{it\Delta}\P_{\wt\CC}\P_{N_1}U_1\big)^{\pm_1}\prod_{j=2}^{2m+1}\big(e^{it\Delta}\P_{N_j}U_j\big)^{\pm_j}dxdt.
\end{align*}
Thus, since we have $p,q>\frac{2(d+2)}d$ from our choice of $p$ and $q$, we have from \eqref{StrichartzLoc}-\eqref{StrichartzLoc2} that
\begin{align*}
&\sup_{\nu\in\Z}\sum_{\substack{n_0,...,n_5\in\Z^3\\\sum_{j=0}^5\pm_jn_j=0\\ |n_j|\sim N_j,~~n_0\in\CC_\ell,~n_1\in\CC_{\wt\ell}\\ \sum_{j=0}^5\pm_j|n_j|^2=\nu}}\prod_{j=0}^5\ft U_j(n_j)\\
&\qquad\les \big\|e^{it\Delta}\P_{\CC}\P_{N_0}U_0\big\|_{L^q(\T\times\T^3)}\big\|e^{it\Delta}\P_{\wt\CC}\P_{N_1}U_1\big\|_{L^q(\T\times\T^3)}\prod_{j=2}^5\big\|e^{it\Delta}\P_{N_j}U_j\big\|_{L^p(\T\times\T^3)}\\
&\qquad\les \big\|\P_{\CC}\P_{N_0}U_0\big\|_{L^2}\big\|\P_{\wt\CC}\P_{N_1}U_1\big\|_{L^2}\\
&\qquad\qquad\times N_2^{2\big(\frac{d}2-\frac{d+2}q\big)+\big(\frac{d}2-\frac{d+1}p\big)}\|\P_{N_2}U_2\|_{L^2}\prod_{j=3}^{2m+1}N_j^{\frac{d}2-\frac{d+2}p}\|\P_{N_j}U_j\|_{L^2}.
\end{align*}
The case $N_1\sim N_2\gtrsim N_0$ is estimated similarly. This proves Lemma~\ref{LEM:multilinearT31} since ${ \frac{d}2-\frac{d+2}p = s_c -\big(\frac{d+2}p-\frac1m\big)}$ and ${ 2\big(\frac{d}2-\frac{d+2}q\big)+\big(\frac{d}2-\frac{d+1}p\big) = s_c+(2m-1)\big(\frac{d+2}p-\frac1m\big)}$ by the definition of $s_c=\frac{d}2-\frac1m$ and our choice of $q$ such that $\frac2q+\frac{2m}p=1$.
\end{proof}

We now establish a refined version of Lemma~\ref{LEM:multilinearT31} in the spirit of \cite[Corollary 3.4]{HTT}.
\begin{lemma}\label{LEM:multilinearT32}
Let $ p>m(d+2)$ and ${\displaystyle \frac{2(d+2)}d<q<\frac{3m(d+2)}{md+1}}$ be such that ${\displaystyle \frac3q+\frac{2m-1}p=1}$. Then there exists $0<\theta<1-\frac{2(d+2)}{dq}$ such that for any $N_0\sim N_1\ge N_2\ge\dots\ge N_{2m+1}$, any $\CC_\ell,\CC_{\wt\ell}\in\CC(N_2)$, let $M=(N_2^2N_1^{-1})\vee 1$ and $\{\RR_\alpha\}$, $|\alpha|\sim M^{-1}N_1$, be a partition of $\CC_\ell$ into almost disjoint strips of width $M$ orthogonal to the center $n_\ell$ of $\CC_\ell$:
$$\RR_\alpha = \big\{n\in \CC_\ell,~~n\cdot n_\ell\in[\alpha|n_\ell|M;(\alpha+1)|n_\ell|M)\big\},$$
and similarly for $\CC_{\wt\ell}$. Then for any $U_0,\dots,U_{2m+1}\in L^2(\T^d)$ with non-negative Fourier coefficients, it holds
\begin{align*}
&\sup_{\nu\in\Z}\sum_{\substack{|\alpha|,|\beta|\sim M^{-1}N_1\\ |\pm_0 \alpha\pm_1\beta|\les 1}}\sum_{\substack{n_0,...,n_{2m+1}\in\Z^d\\\sum_{j=0}^{2m+1}\pm_jn_j=0\\ |n_j|\sim N_j,~~n_0\in\RR_\alpha,~n_1\in\RR_{\beta}\\ \sum_{j=0}^{2m+1}\pm_j|n_j|^2=\nu}}\prod_{j=0}^{2m+1}\ft U_j(n_j)\\
&\qquad\les \|\P_{\CC_\ell}\P_{N_0}U_0\|_{L^2}\|\P_{\CC_{\wt\ell}}\P_{N_1}U_1\|_{L^2}\\
&\qquad\qquad\times N_2^{s_c-(2m-1)\eps}\Big(\frac1{N_2}+\frac{N_2}{N_1}\Big)^{\theta}\|\P_{N_2}U_2\|_{L^2}\prod_{j=3}^{2m+1}N_j^{s_c+\eps}\|\P_{N_j}U_j\|_{L^2}
\end{align*}
with $\eps:=\frac1m-\frac{d+2}{p}>0$.
The same estimate holds exchanging the roles of $N_0$ and $N_2$.
\end{lemma}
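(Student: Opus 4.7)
My plan is to follow closely the strategy used by Herr, Tataru and Tzvetkov in \cite[Corollary 3.4]{HTT} and adapt it to the $(2m{+}2)$-linear setting. As in the proof of Lemma~\ref{LEM:multilinearT31}, I would first rewrite the restricted lattice sum as a space-time integral. Using
\[
\sup_{\nu\in\Z}\mathbf{1}_{\sum_j \pm_j|n_j|^2=\nu}\le \frac{1}{2\pi}\int_0^{2\pi}e^{it(\sum_j\pm_j|n_j|^2-\nu)}\, dt,
\]
the positivity of the Fourier coefficients of $U_j$ lets one replace the supremum on the left-hand side by a $(2m{+}2)$-linear space-time integral on $\T\times\T^d$ involving the linear Schrödinger evolutions $e^{it\Delta}\P_{\RR_\alpha}\P_{N_0}U_0$, $e^{it\Delta}\P_{\RR_\beta}\P_{N_1}U_1$ and $e^{it\Delta}\P_{N_j}U_j$, $j\ge 2$. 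This reduces the statement to a multilinear Strichartz-type inequality with a refined frequency localization on the top two modes.

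The central new ingredient compared with Lemma~\ref{LEM:multilinearT31} is a \emph{bilinear} Strichartz estimate on the strips $\RR_\alpha,\RR_\beta$, which is where the gain $(N_2^{-1}+N_2/N_1)^\theta$ is produced. The scale $M=(N_2^2 N_1^{-1})\vee 1$ is precisely the one at which the level sets of $|n_0|^2+|n_1|^2$ in a cube of side $N_2$ in $n_0+n_1$ are resolved, so the strips behave like rectangular boxes to which one can apply the Bourgain--Demeter $\ell^2$-decoupling for the paraboloid on $\T^d$ (or equivalently the rectangular Strichartz of \cite{KV,HTT}). Combined with the almost-orthogonality constraint $|\pm_0\alpha\pm_1\beta|\les 1$, which forces $n_0^\pm\pm n_1\in \CC(N_2)$ and hence makes the two paraboloid pieces transverse in the $n_\ell$-direction up to a quantitative angle $\gtrsim M/N_1$, interpolation between the sharp decoupling estimate at the exponent $q_0=\tfrac{2(d+2)}{d}$ and the trivial bound at $q=\infty$ yields
\[
\big\|e^{it\Delta}\P_{\RR_\alpha}\P_{N_0}U_0\cdot e^{it\Delta}\P_{\RR_\beta}\P_{N_1}U_1\big\|_{L^{q/2}_{t,x}}\les \Big(\tfrac{1}{N_2}+\tfrac{N_2}{N_1}\Big)^{\theta}\,\big\|\P_{\RR_\alpha}\P_{N_0}U_0\big\|_{L^2}\big\|\P_{\RR_\beta}\P_{N_1}U_1\big\|_{L^2}
\]
for some $0<\theta<1-\tfrac{2(d+2)}{dq}$ dictated by the interpolation exponent, which is positive by the upper bound $q<\tfrac{3m(d+2)}{md+1}$.

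Once this bilinear estimate is in hand, I would apply Hölder's inequality using the admissibility relation $\tfrac{3}{q}+\tfrac{2m-1}{p}=1$: the bilinear pair is placed in $L^{q/2}_{t,x}$, one of the remaining factors in $L^{q}_{t,x}$, and the other $2m-1$ factors in $L^{p}_{t,x}$. For each of these, the Strichartz estimate \eqref{StrichartzLoc} (valid since $p>m(d+2)>\tfrac{2(d+2)}{d}$ and $q>\tfrac{2(d+2)}{d}$) supplies $\|e^{it\Delta}\P_{N_j}U_j\|_{L^p_{t,x}}\les N_j^{s_c+\eps}\|\P_{N_j}U_j\|_{L^2}$ with $\eps=\tfrac1m-\tfrac{d+2}p>0$, and a similar estimate at the exponent $q$ for the remaining factor on $\P_{N_2}U_2$. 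Book-keeping of the exponents yields exactly the weight $N_2^{s_c-(2m-1)\eps}$ on the $\P_{N_2}U_2$ factor and $N_j^{s_c+\eps}$ on the others. Finally, the near-diagonal constraint $|\pm_0\alpha\pm_1\beta|\les 1$ reduces the double sum over $(\alpha,\beta)$ to a single sum that can be handled by Cauchy--Schwarz, recovering the full $L^2$ norms $\|\P_{\CC_\ell}\P_{N_0}U_0\|_{L^2}$ and $\|\P_{\CC_{\wt\ell}}\P_{N_1}U_1\|_{L^2}$ from the orthogonality of the strips $\{\RR_\alpha\}$, $\{\RR_\beta\}$.

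The main obstacle I expect is the derivation of the bilinear estimate on strips with the sharp gain $(N_2^{-1}+N_2/N_1)^\theta$. This requires not just a rectangular-box version of the Bourgain--Demeter decoupling but also a careful geometric verification that, under the separation $|\pm_0\alpha\pm_1\beta|\les 1$, the paraboloid caps over $\RR_\alpha$ and $\RR_\beta$ are genuinely transverse in the $d+1$-dimensional frequency-time space, with a lower bound on the transversality angle expressible in terms of $M/N_1$. All other steps---the Fourier-side reduction, the Hölder distribution, and the Cauchy--Schwarz summation---are routine adaptations of \cite[Corollary 3.4]{HTT} and of the argument used in Subsubsection~\ref{SUBSUB:NLS}.
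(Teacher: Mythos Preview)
Your overall structure---rewriting the sum as a space-time integral, distributing via H\"older with $\tfrac3q+\tfrac{2m-1}p=1$, and summing in $(\alpha,\beta)$ by Cauchy--Schwarz---matches the paper. The difference lies in how the gain $(N_2^{-1}+N_2/N_1)^\theta$ is obtained. You propose a genuinely \emph{bilinear} Strichartz estimate on the strips, relying on transversality of the paraboloid caps and decoupling; you correctly flag this as the main obstacle. The paper avoids this entirely: it uses only \emph{linear} estimates on each strip separately. Since $\RR_\alpha$ is contained in a box of dimensions $M\times N_2^{d-1}$, Bernstein gives
\[
\big\|\P_{\RR_\alpha}e^{it\Delta}\phi\big\|_{L^\infty(\T\times\T^d)}\les M^{\frac12}N_2^{\frac{d-1}{2}}\|\P_{\RR_\alpha}\phi\|_{L^2},
\]
and interpolating this with the cube-localized Strichartz \eqref{StrichartzLoc2} at some $\tfrac{2(d+2)}d<q_0<q$ yields
\[
\big\|\P_{\RR_\alpha}e^{it\Delta}\phi\big\|_{L^q(\T\times\T^d)}\les N_2^{\frac d2-\frac{d+2}q}\Big(\tfrac1{N_2}+\tfrac{N_2}{N_1}\Big)^{\frac12(1-\frac{q_0}q)}\|\P_{\RR_\alpha}\phi\|_{L^2}.
\]
Applying this to \emph{both} strip factors $\P_{\RR_\alpha}\P_{N_0}U_0$ and $\P_{\RR_\beta}\P_{N_1}U_1$ in $L^q$, and the ordinary Strichartz to $\P_{N_2}U_2$ in $L^q$ and to the remaining factors in $L^p$, one gets $\theta=1-\tfrac{q_0}q$ directly. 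So the paper's route is more elementary: the gain comes purely from the \emph{thinness} of the strips (via Bernstein), not from any bilinear transversality, and the obstacle you anticipate never arises.
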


\begin{proof}
We proceed as in Lemma~\ref{LEM:multilinearT31}, but on top of \eqref{StrichartzLoc}-\eqref{StrichartzLoc2} we also use the refined Strichartz estimate from \cite[Corollary 3.4]{HTT}. Namely, using Bernstein inequality, we have
\begin{align}\label{Bernstein}
\big\|\P_{\RR_\alpha} e^{it\Delta}\phi\big\|_{L^\infty(\T\times\T^d)}\les M^\frac12 N_2^{\frac{d-1}{2}}\|\P_{\RR_\alpha}\phi\|_{L^2_x}.
\end{align}
Letting $p,q$ be as in the statement of Lemma~\ref{LEM:multilinearT32} and taking $\frac{2(d+2)}d<q_0<q$ we can interpolate between \eqref{Bernstein} and \eqref{StrichartzLoc2} for $q_0$ to get
\begin{align}\label{StrichartzLoc4}
\big\|\P_{\RR_\alpha} e^{it\Delta}\phi\big\|_{L^q(\T\times\T^d)}&\les N_2^{\frac{q_0}{q}\big(\frac{d}2-\frac{d+2}{q_0}\big)+\frac{d-1}{2}(1-\frac{q_0}q)}M^{\frac12(1-\frac{q_0}q)}\|\P_{\RR_\alpha}\phi\|_{L^2_x}\notag\\
&\sim N_2^{\frac{d}2-\frac{d+2}q}\Big(\frac1{N_2}+\frac{N_2}{N_1}\Big)^{\frac12(1-\frac{q_0}q)}\|\P_{\RR_\alpha}\phi\|_{L^2_x}.
\end{align}

Thus, with $p$ and $q$ as in the statement, proceeding as in the proof of Lemma~\ref{LEM:multilinearT31}, we estimate using \eqref{StrichartzLoc}-\eqref{StrichartzLoc4} and then Cauchy-Schwarz inequality in $\alpha,\beta$:
\begin{align*}
&\sup_{\nu\in\Z}\sum_{\substack{|\alpha|,|\beta|\sim M^{-1}N_1\\ |\pm_0 \alpha\pm_1\beta|\les 1}}\sum_{\substack{n_0,...,n_{2m+1}\in\Z^d\\\sum_{j=0}^{2m+1}\pm_jn_j=0\\ |n_j|\sim N_j,~~n_0\in\RR_\alpha,~n_1\in\RR_{\beta}\\ \sum_{j=0}^{2m+1}\pm_j|n_j|^2=\nu}}\prod_{j=0}^{2m+1}\ft U_j(n_j)\\
&\qquad\les \sum_{\substack{|\alpha|,|\beta|\sim M^{-1}N_1\\ |\pm_0 \alpha\pm_1\beta|\les 1}}\big\|e^{it\Delta}\P_{\RR_\alpha}\P_{N_0}U_0\big\|_{L^q(\T\times\T^d)}\big\|e^{it\Delta}\P_{\RR_{\beta}}\P_{N_1}U_1\big\|_{L^q(\T\times\T^d)}\\
&\qquad\qquad\times\big\|e^{it\Delta}\P_{N_2}U_2\big\|_{L^q(\T\times\T^d)}\prod_{j=3}^{2m+1}\big\|e^{it\Delta}\P_{N_j}U_j\big\|_{L^p(\T\times\T^d)}\\
&\qquad\les \sum_{\substack{|\alpha|,|\beta|\sim M^{-1}N_1\\ |\pm_0 \alpha\pm_1\beta|\les 1}}\big\|\P_{\RR_m}\P_{N_0}U_0\big\|_{L^2}\big\|\P_{\RR_{\wt m}}\P_{N_1}U_1\big\|_{L^2}\\
&\qquad\qquad\times N_2^{3\big(\frac{d}2-\frac{d+2}q\big)}\Big(\frac1{N_2}+\frac{N_2}{N_1}\Big)^{1-\frac{q_0}q}\|\P_{N_2}U_2\|_{L^2}\prod_{j=3}^{2m+1}N_j^{\frac{d}2-\frac{d+2}p}\|\P_{N_j}U_j\|_{L^2}\\
&\qquad\les \|\P_{\CC_\ell}\P_{N_0}U_0\|_{L^2}\|\P_{\CC_{\wt\ell}}\P_{N_1}U_1\|_{L^2}\\
&\qquad\qquad\times N_2^{3\big(\frac{d}2-\frac{d+2}q\big)}\Big(\frac1{N_2}+\frac{N_2}{N_1}\Big)^{1-\frac{q_0}q}\|\P_{N_2}U_2\|_{L^2}\prod_{j=3}^{2m+1}N_j^{\frac{d}2-\frac{d+2}p}\|\P_{N_j}U_j\|_{L^2}.
\end{align*}
The case $N_1\sim N_2\gtrsim N_0$ is treated similarly. This proves Lemma~\ref{LEM:multilinearT32} with $\theta=1-\frac{q_0}q$.
\end{proof}

We can now recast \cite[Proposition 3.5]{HTT} and \cite[Theorem 3.4]{Wang} also as a suitable multilinear restriction estimate.
\begin{proposition}\label{PROP:multilinearT3}
For any $U_0\in H^{-s_c}(\T^d)$ and $U_1\dots,U_{2m+1}\in H^{s_c}(\T^d)$ with non-negative Fourier coefficients, it holds
\begin{align*}
&\sum_{\substack{N_0\sim N_1\gtrsim N_2\ge\dots\ge N_{2m+1}\\\CC_\ell,\CC_{\wt\ell}\in\CC(N_2)\\|\ell-\wt\ell|\les 1}}\sum_{\substack{n_0,...,n_{2m+1}\in\Z^d\\\sum_{j=0}^{2m+1}\pm_jn_j=0\\ |n_j|\sim N_j,~~n_0\in\CC_\ell,~n_1\in\CC_{\wt\ell}}}\langle\sum_{j=0}^{2m+1}\pm_j|n_j|^2\rangle^{-\rho}\prod_{j=0}^{2m+1}\ft U_j(n_j)\\
&\qquad\qquad\les \|U_0\|_{H^{-s_c}}\prod_{j=1}^{2m+1}\|U_j\|_{H^{s_c}}
\end{align*}
and
\begin{align*}
&\sum_{\substack{N_1\sim N_2\gtrsim N_0,N_2\ge\dots\ge N_{2m+1}\\\CC_\ell,\CC_{\wt\ell}\in\CC(N_0)\\|\ell-\wt\ell|\les 1}}\sum_{\substack{n_0,...,n_{2m+1}\in\Z^d\\\sum_{j=0}^{2m+1}\pm_jn_j=0\\ |n_j|\sim N_j,~~n_1\in\CC_\ell,~n_2\in\CC_{\wt\ell}}}\langle\sum_{j=0}^{2m+1}\pm_j|n_j|^2\rangle^{-\rho}\prod_{j=0}^{2m+1}\ft U_j(n_j)\\
&\qquad\qquad\les \|U_0\|_{H^{-s_c}}\prod_{j=1}^{2m+1}\|U_j\|_{H^{s_c}}.
\end{align*}
\end{proposition}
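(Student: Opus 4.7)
The approach I would take is to reduce the weighted sum over the modulation $\nu:=\sum_{j=0}^{2m+1}\pm_j|n_j|^2$ to a pointwise-in-$\nu$ estimate using $\rho>1$, bound the resulting supremum by Lemma~\ref{LEM:multilinearT32}, and then execute the dyadic summation via Cauchy--Schwarz. Since $\rho>1$ in the setting of Theorem~\ref{THM:NLScrit}, the $\ell^1(\Z)$ summability of $\jb{\nu}^{-\rho}$ gives, by Fubini,
\begin{align*}
\sum_{\substack{n_0,\dots,n_{2m+1}\\\sum\pm_jn_j=0\\n_0\in\CC_\ell,\,n_1\in\CC_{\wt\ell}}}\jb{\nu}^{-\rho}\prod_{j=0}^{2m+1}\ft U_j(n_j)=\sum_{\nu\in\Z}\jb{\nu}^{-\rho}A(\nu)\les \sup_{\nu\in\Z}A(\nu),
\end{align*}
where $A(\nu)$ is the same sum further restricted to the level set $\{\sum\pm_j|n_j|^2=\nu\}$. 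This reduction is the key step turning the weighted modulation problem into a uniform-in-$\nu$ one, which is exactly what Lemmas~\ref{LEM:multilinearT31}--\ref{LEM:multilinearT32} control.

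Next, I would decompose the cubes $\CC_\ell,\CC_{\wt\ell}\in\CC(N_2)$ into strips $\RR_\alpha,\RR_\beta$ of width $M=(N_2^2/N_1)\vee 1$ as in Lemma~\ref{LEM:multilinearT32}. Projecting the convolution identity $\sum_j\pm_j n_j=0$ onto the common direction $n_\ell/|n_\ell|\sim n_{\wt\ell}/|n_{\wt\ell}|$ (their angular fluctuation being $O(N_2/N_1)$ since $|\ell-\wt\ell|\les 1$) and using $|n_j|\les N_2$ for $j\ge 2$ restricts the interacting strip pairs to $|\pm_0\alpha\pm_1\beta|\les 1$, exactly the regime covered by Lemma~\ref{LEM:multilinearT32}. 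Applying that lemma and converting each $L^2$-block-norm to an $H^{\pm s_c}$-block-norm via $\|\P_N U\|_{L^2}\sim N^{\mp s_c}\|\P_N U\|_{H^{\pm s_c}}$ collapses the $N_j^{s_c}$ prefactors and yields
\begin{align*}
\sup_\nu A(\nu)\les \|\P_{\CC_\ell}\P_{N_0}U_0\|_{H^{-s_c}}\|\P_{\CC_{\wt\ell}}\P_{N_1}U_1\|_{H^{s_c}}\,N_2^{-(2m-1)\eps}\Big(\tfrac1{N_2}+\tfrac{N_2}{N_1}\Big)^\theta\prod_{j=2}^{2m+1}N_j^\eps\|\P_{N_j}U_j\|_{H^{s_c}},
\end{align*}
with $\eps=\tfrac1m-\tfrac{d+2}p>0$ and $\theta>0$.

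Finally, I would execute the dyadic summation. For each $j\in\{3,\dots,2m+1\}$, Cauchy--Schwarz in $N_j\le N_2$ combined with $\eps>0$ gives $\sum_{N_j\le N_2}N_j^\eps\|\P_{N_j}U_j\|_{H^{s_c}}\les N_2^\eps\|U_j\|_{H^{s_c}}$, producing $(2m-1)$ factors of $N_2^\eps$ that exactly cancel $N_2^{-(2m-1)\eps}$. The residual weight $(1/N_2+N_2/N_1)^\theta\|\P_{N_2}U_2\|_{H^{s_c}}$ is summed over $N_2\le N_1$ by splitting at $N_2=\sqrt{N_1}$: on $N_2\le\sqrt{N_1}$ one uses $N_2^{-\theta}$ (dyadically summable for any $\theta>0$), on $N_2\ge\sqrt{N_1}$ one applies Cauchy--Schwarz to $(N_2/N_1)^\theta$ so that the sum is dominated by the endpoint $N_1$, yielding $\|U_2\|_{H^{s_c}}$ in each regime. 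Summing the cube-indices $|\ell-\wt\ell|\les 1$ via Cauchy--Schwarz and Plancherel gives $\|\P_{N_0}U_0\|_{H^{-s_c}}\|\P_{N_1}U_1\|_{H^{s_c}}$, and a final Cauchy--Schwarz in $N_0\sim N_1$ produces $\|U_0\|_{H^{-s_c}}\|U_1\|_{H^{s_c}}$. The second case $N_1\sim N_2\gtrsim N_0$ is treated identically using the exchanged-role version of Lemma~\ref{LEM:multilinearT32}.

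The main obstacle I expect is verifying that the strip constraint $|\pm_0\alpha\pm_1\beta|\les 1$ of Lemma~\ref{LEM:multilinearT32} is fully compatible with the convolution restriction of the proposition in the regime $N_1\gg N_2$: a purely dimensional projection argument a priori only yields $|\pm_0\alpha\pm_1\beta|\les N_2/M$, which may force a further pigeonhole/shift decomposition of the $(\alpha,\beta)$ pairs, with the resulting multiplicity needing to be absorbed either by the refined Strichartz gain $(1/N_2+N_2/N_1)^\theta$ or by a hybrid interpolation with Lemma~\ref{LEM:multilinearT31}. The rest of the argument is a routine chain of Cauchy--Schwarz inequalities tuned to the critical scaling.
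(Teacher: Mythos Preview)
Your reduction to $\sup_\nu A(\nu)$ discards the decay of $\jb{\nu}^{-\rho}$ precisely where it is needed, and this is the source of the obstacle you flag at the end. You correctly observe that projecting the \emph{linear} convolution relation onto $n_\ell/|n_\ell|$ only yields $|\pm_0\alpha\pm_1\beta|\les N_2/M$, which is far from the almost-diagonal constraint required by Lemma~\ref{LEM:multilinearT32}. Your first proposed fix (absorbing the multiplicity $N_2/M$ by the gain $(N_2^{-1}+N_2/N_1)^\theta$) fails: in the regime $N_2^2\ge N_1$ one has $N_2/M=N_1/N_2$ while the gain is only $(N_2/N_1)^\theta$ with $\theta<1$, so the product $(N_1/N_2)^{1-\theta}$ diverges.

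The paper avoids this by \emph{not} passing to $\sup_\nu$. It keeps the sum over $\nu$ and splits at $|\nu|\sim N_2^2$. For $|\nu|\gtrsim N_2^2$ the weight itself yields a factor $N_2^{-2\delta}$ (any $0<\delta<\rho-1$), and the cruder Lemma~\ref{LEM:multilinearT31} suffices: choosing $p$ there so that $(2m-1)\eps<2\delta$ makes the $N_2$-sum convergent. For $|\nu|\ll N_2^2$ the strip constraint is obtained not from the linear convolution identity but from the \emph{quadratic} one: writing $n_0=n_\ell+(n_0-n_\ell)$ and decomposing along and orthogonal to $n_\ell$ gives $|n_0|^2=\alpha^2M^2+O(|\alpha|M^2)$, and likewise $|n_1|^2=\beta^2M^2+O(|\beta|M^2)$; then $\pm_0|n_0|^2\pm_1|n_1|^2=\nu+O(N_2^2)=O(N_2^2)$ forces $|\pm_0\alpha\pm_1\beta|\les 1$. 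In short, the almost-diagonality that Lemma~\ref{LEM:multilinearT32} needs is a consequence of \emph{small modulation}, not of the frequency convolution; and the regime of large modulation is handled by the weight together with the unrefined Lemma~\ref{LEM:multilinearT31}. Your ``hybrid interpolation with Lemma~\ref{LEM:multilinearT31}'' points in the right direction but would have to be implemented \emph{before} taking the supremum over $\nu$, i.e.\ as this high/low-$\nu$ split. The dyadic summation you sketch afterwards is essentially correct and matches the paper.
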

\begin{proof}
We start by rewriting
\begin{align*}
\sum_{\substack{n_0,...,n_{2m+1}\in\Z^d\\\sum_{j=0}^{2m+1}\pm_jn_j=0\\ |n_j|\sim N_j,~~n_0\in\CC_\ell,~n_1\in\CC_{\wt\ell}}}\langle\sum_{j=0}^{2m+1}\pm_j|n_j|^2\rangle^{-\rho}\prod_{j=0}^{2m+1}\ft U_j(n_j)=\sum_{\nu\in\Z}\sum_{\substack{n_0,...,n_{2m+1}\in\Z^d\\\sum_{j=0}^{2m+1}\pm_jn_j=0\\ |n_j|\sim N_j,~~n_0\in\CC_\ell,~n_1\in\CC_{\wt\ell}\\ \sum_{j=0}^{2m+1}\pm_j|n_j|^2=\nu}}\jb{\nu}^{-\rho}\prod_{j=0}^{2m+1}\ft U_j(n_j).
\end{align*}
Since $\rho>1$, taking $0<\delta<\rho-1$ and $p,q$ as in Lemma~\ref{LEM:multilinearT31} with $p$ close enough to $m(d+2)$ such that $\eps:=\frac{d+2}{p}-\frac1m$ satisfies $(2m-1)\eps<2\delta$, we can estimate the sum on $|\nu|\gtrsim N_2^2$ by using Lemma~\ref{LEM:multilinearT31}:
\begin{align*}
&\sum_{|\nu|\gtrsim N_2^2}\sum_{\substack{n_0,...,n_{2m+1}\in\Z^d\\\sum_{j=0}^{2m+1}\pm_jn_j=0\\ |n_j|\sim N_j,~~n_0\in\CC_\ell,~n_1\in\CC_{\wt\ell}\\ \sum_{j=0}^{2m+1}\pm_j|n_j|^2=\nu}}\jb{\nu}^{-\rho}\prod_{j=0}^{2m+1}\ft U_j(n_j)\\
&\qquad\les N_2^{-2\delta}\sup_{\nu\in\Z}\sum_{\substack{n_0,...,n_{2m+1}\in\Z^d\\\sum_{j=0}^{2m+1}\pm_jn_j=0\\ |n_j|\sim N_j,~~n_0\in\CC_\ell,~n_1\in\CC_{\wt\ell}\\ \sum_{j=0}^{2m+1}\pm_j|n_j|^2=\nu}}\prod_{j=0}^{2m+1}\ft U_j(n_j)\\
&\qquad\les \|\P_{\CC_\ell}\P_{N_0}U_0\|_{L^2}\|\P_{\CC_{\wt\ell}}\P_{N_1}U_1\|_{L^2}N_2^{s_c+(2m-1)\eps-2\delta}\|\P_{N_2}U_2\|_{L^2}\prod_{j=3}^{2m+1}N_j^{s_c-\eps}\|\P_{N_j}U_j\|_{L^2}.
\end{align*}
With our choice of $\eps,\delta$ we can thus use Cauchy-Schwarz inequality to sum on $\ell,\wt\ell$ then $N_0\sim N_1\gtrsim N_2\ge\dots\ge N_{2m+1}$ to get
\begin{align*}
&\sum_{\substack{N_0\sim N_1\gtrsim N_2\ge\dots\ge N_{2m+1}\\\CC_\ell,\CC_{\wt\ell}\in\CC(N_2)\\|\ell-\wt\ell|\les 1}}\sum_{|\nu|\gtrsim N_2^2}\sum_{\substack{n_0,...,n_{2m+1}\in\Z^d\\\sum_{j=0}^{2m+1}\pm_jn_j=0\\ |n_j|\sim N_j,~~n_0\in\CC_\ell,~n_1\in\CC_{\wt\ell}\\ \sum_{j=0}^{2m+1}\pm_j|n_j|^2=\nu}}\jb{\nu}^{-\rho}\prod_{j=0}^{2m+1}\ft U_j(n_j)\\
&\qquad\les \sum_{\substack{N_0\sim N_1\gtrsim N_2\ge\dots\ge N_{2m+1}\\\CC_\ell,\CC_{\wt\ell}\in\CC(N_2)\\|\ell-\wt\ell|\les 1}}\|\P_{\CC_\ell}\P_{N_0}U_0\|_{L^2}\|\P_{\CC_{\wt\ell}}\P_{N_1}U_1\|_{L^2}\\
&\qquad\qquad\times N_2^{1+15\eps-2\delta}\|\P_{N_2}U_2\|_{L^2}\prod_{j=3}^5N_j^{1-5\eps}\|\P_{N_j}U_j\|_{L^2}\\
&\qquad\les \sum_{N_0\sim N_1\gtrsim N_2\ge\dots\ge N_{2m+1}}N_0^{-s_c}\|\P_{N_0}U_0\|_{L^2}N_1^{s_c}\|\P_{N_1}U_1\|_{L^2}\\
&\qquad\qquad\times N_2^{s_c+(2m-1)\eps-2\delta}\|\P_{N_2}U_2\|_{L^2}\prod_{j=3}^{2m+1}N_j^{s_c-\eps}\|\P_{N_j}U_j\|_{L^2}\\
&\qquad\les \|U_0\|_{H^{-s_c}}\prod_{j=1}^{2m+1}\|U_j\|_{H^{s_c}}.
\end{align*}

It remains to estimate the contribution from $|\nu|\ll N_2^2$. Following \cite{HTT}, in order to use Lemma~\ref{LEM:multilinearT32}, for $\CC_\ell\in\CC(N_2)$, we write $\CC_\ell = n_\ell+[-N_2;N_2]^d$ for some $|n_\ell|\sim N_1$, and for $$M=(N_2^2N_1^{-1})\vee 1,$$ we let $\{\RR_\alpha\}$, $|\alpha|\sim M^{-1}N_1$, be a partition of $\CC_\ell$ into almost disjoint strips orthogonal to $n_\ell$ of width $M$:
$$\RR_\alpha = \big\{n\in \CC_\ell,~~n\cdot n_\ell\in[\alpha|n_\ell|M;(\alpha+1)|n_\ell|M)\big\},$$
and similarly for a partition $\{\RR_\beta\}$ of $\CC_{\wt\ell}$.

For $|\nu|\ll N_2^2$, we can thus decompose
\begin{align*}
\sum_{\substack{n_0,...,n_{2m+1}\in\Z^d\\\sum_{j=0}^{2m+1}\pm_jn_j=0\\ |n_j|\sim N_j,~~n_0\in\CC_\ell,~n_1\in\CC_{\wt\ell}\\ \sum_{j=0}^{2m+1}\pm_j|n_j|^2=\nu}}\prod_{j=0}^{2m+1}\ft U_j(n_j)&=\sum_{\alpha,\beta}\sum_{\substack{n_0,...,n_{2m+1}\in\Z^d\\\sum_{j=0}^{2m+1}\pm_jn_j=0\\ |n_j|\sim N_j,~~n_0\in\RR_\alpha,~n_1\in\RR_{\beta}\\ \sum_{j=0}^{2m+1}\pm_j|n_j|^2=\nu}}\prod_{j=0}^{2m+1}\ft U_j(n_j).
\end{align*}
Now, the point is that, as in \cite[Proposition 3.5]{HTT} and \cite[Theorem 3.4]{Wang}, for a given $\alpha$, there are at most $O(1)$ values of $\beta$ for which the sum above is non empty. Indeed, for $n_0\in\RR_\alpha$, it holds
\begin{align*}
|n_0|^2 &= |n_0-n_\ell|^2+2n_0\cdot n_\ell+|n_\ell|^2\\
&=\underset{O(N_2^2)}{\underbrace{|n_0-n_\ell|^2}}+\underset{= \alpha^2M^2+O(\alpha M^2)}{\underbrace{\frac1{|n_\ell|^2}(n_0\cdot n_\ell)^2}} -\underset{=O(N_2^2)}{\underbrace{\frac1{|n_\ell|^2}\big((n_0-n_\ell)\cdot n_\ell)\big)^2}}\\
&=\alpha^2M^2+O(\alpha M^2)
\end{align*}
since $N_2^2\les |\alpha|M^2$ with our choice of $M$ and that $|\alpha|\sim M^{-1}N_1$. Similarly, we have $|n_1|^2=\beta^2M^2+O(\beta M^2)$ for $n_1\in\RR_{\beta}$, and thus from the constraint ${\displaystyle \sum_{j=0}^{2m+1}\pm_j|n_j|^2=\nu}$ with $|\nu|\ll N_2^2$, we see that we must have $\pm_0|n_0|^2\pm_1|n_1|^2=O(N_2^2)$, i.e.
$$(\pm_0 \alpha^2\pm_1\beta^2)M^2=O((|\alpha|+|\beta|)M).$$
Thus $|\pm_0 \alpha\pm_1\beta|\les 1$. This yields
\begin{align*}
\sum_{\substack{n_0,...,n_{2m+1}\in\Z^d\\\sum_{j=0}^{2m+1}\pm_jn_j=0\\ |n_j|\sim N_j,~~n_0\in\CC_\ell,~n_1\in\CC_{\wt\ell}\\ \sum_{j=0}^{2m+1}\pm_j|n_j|^2=\nu}}\prod_{j=0}^{2m+1}\ft U_j(n_j)&=\sum_{\substack{|\alpha|,|\beta|\sim M^{-1}N_1\\ |\pm_0 \alpha\pm_1\beta|\les 1}}\sum_{\substack{n_0,...,n_{2m+1}\in\Z^d\\\sum_{j=0}^{2m+1}\pm_jn_j=0\\ |n_j|\sim N_j,~~n_0\in\RR_\alpha,~n_1\in\RR_{\beta}\\ \sum_{j=0}^{2m+1}\pm_j|n_j|^2=\nu}}\prod_{j=0}^{2m+1}\ft U_j(n_j).
\end{align*}
Thus, taking $p,q,\eps,\theta$ as in Lemma~\ref{LEM:multilinearT32}, we can use Lemma~\ref{LEM:multilinearT32} and then Cauchy-Schwarz inequality to sum on $N_j$:
\begin{align*}
&\sum_{\substack{N_0\sim N_1\gtrsim N_2\ge\dots\ge N_{2m+1}\\\CC_\ell,\CC_{\wt\ell}\in\CC(N_2)\\|\ell-\wt\ell|\les 1}}\sum_{|\nu|\ll N_2^2}\sum_{\substack{n_0,...,n_{2m+1}\in\Z^d\\\sum_{j=0}^{2m+1}\pm_jn_j=0\\ |n_j|\sim N_j,~~n_0\in\CC_\ell,~n_1\in\CC_{\wt\ell}\\ \sum_{j=0}^{2m+1}\pm_j|n_j|^2=\nu}}\jb{\nu}^{-\rho}\prod_{j=0}^{2m+1}\ft U_j(n_j)\\
&\qquad\les \sum_{N_0\sim N_1\gtrsim N_2\ge\dots\ge N_{2m+1}}\|\P_{N_0}U_0\|_{L^2}\|\P_{N_1}U_1\|_{L^2}\\
&\qquad\qquad\qquad\times N_2^{s_c-(2m-1)\eps}\Big(\frac1{N_2}+\frac{N_2}{N_1}\Big)^{\theta}\|\P_{N_2}U_2\|_{L^2}\prod_{j=3}^{2m+1}N_j^{s_c+\eps}\|\P_{N_j}U_j\|_{L^2}\\
&\les \sum_{N_0\sim N_1\gtrsim N_2}N_0^{-s_c}\|\P_{N_0}U_0\|_{L^2}N_1^{s_c}\|\P_{N_1}U_1\|_{L^2}\\
&\qquad\qquad\qquad\times N_2^{s_c-(2m-1)\eps}\Big(\frac1{N_2}+\frac{N_2}{N_1}\Big)^{\theta}\|\P_{N_2}U_2\|_{L^2}\prod_{j=3}^{2m+1}N_2^\eps\|U_j\|_{H^{s_c}}\\
&\qquad\les \|U_0\|_{H^{-s_c}}\prod_{j=1}^{2m+1}\|U_j\|_{H^{s_c}}.
\end{align*}
The case $N_1\sim N_2\gtrsim N_0$ is treated similarly. This proves Proposition~\ref{PROP:multilinearT3}.
\end{proof}

We can now state our main multilinear estimate.
\begin{proposition}\label{PROP:multilinearT3bis}
For any $T>0$, $u_j\in X^{s_c}_T$, $j=1,...,2m+1$, and $u_0\in Y^{-s_c}_T$, and signs $\pm_j$, it holds
\begin{align}\label{multiT34}
\Big|\int_0^T\int_{\T^d}\prod_{j=0}^{2m+1}u_j^{\pm_j}dxdt\Big|\les T^\gamma\|u_0\|_{Y^{-s_c}_T}\prod_{j=1}^{2m+1}\|u_j\|_{X^{s_c}_T}.
\end{align}
\end{proposition}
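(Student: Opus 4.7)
The plan is to combine the atomic decomposition of $U^2 H^{s_c}$ applied to $v_j:=e^{W_t\L}u_j$ for $j=1,\dots,2m+1$ with the multilinear restriction estimate of Proposition~\ref{PROP:multilinearT3}, after Plancherel in space and the occupation-time formula \eqref{occupation} in time. The factor $T^\gamma$ will come from a Cauchy--Schwarz summation that is made possible by the assumption $\gamma>\frac12$, while the Fourier-Lebesgue regularity $\rho>1$ in \ref{A0gr} produces the weight $\jb{\Phi(n)}^{-\rho}$ that is consumed by Proposition~\ref{PROP:multilinearT3}.

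First, by multilinearity and Definition~\ref{DEF:UpVp}~(ii), we may replace each $u_j$, $j\ge 1$, by a single $U^2 H^{s_c}$-atom: $v_j(t)=\sum_{k=1}^{K_j}\mathbf 1_{I_{j,k}}(t)\phi_{j,k}$ with $\sum_k\|\phi_{j,k}\|_{H^{s_c}}^2\le 1$, while keeping $v_0\in V^2 H^{-s_c}$ arbitrary. Let $\{J_\ell\}_{\ell=1}^L$ be the common refinement of the partitions $\{I_{j,k}\}_k$, $j=1,\dots,2m+1$. The key combinatorial feature is that the map $\ell\mapsto(k_1(\ell),\dots,k_{2m+1}(\ell))$ is \emph{injective} since each $J_\ell$ is uniquely written as $\bigcap_{j\ge 1} I_{j,k_j(\ell)}$. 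Writing $J_\ell=[a_\ell;b_\ell)$, I further split $v_0(t)=v_0(a_\ell)+(v_0(t)-v_0(a_\ell))$ on each $J_\ell$ to separate a main piece from a remainder.

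Next, on the main piece I write $u_j=e^{-iW_t\Delta}v_j$ for $j\ge 1$ and $u_0^{(\ell)}=e^{-iW_t\Delta}v_0(a_\ell)$, apply Plancherel in $x$, and collect the phase into $e^{iW_t\Phi(n)}$ with $\Phi(n)=\sum_j\pm_j|n_j|^2$. The occupation time formula \eqref{occupation} gives
\begin{equation*}
\int_{J_\ell}e^{iW_t\Phi(n)}dt=\widehat{\tfrac{d\mu_{J_\ell}}{dz}}(-\Phi(n)),
\end{equation*}
and assumption \ref{A0gr} together with $\mu_{J_\ell}=\mu_{[0;b_\ell]}-\mu_{[0;a_\ell]}$ yields the pointwise bound $|\widehat{d\mu_{J_\ell}/dz}(\Phi)|\lesssim |J_\ell|^\gamma\jb{\Phi}^{-\rho}$. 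Invoking Proposition~\ref{PROP:multilinearT3} on each $J_\ell$ then provides
\begin{equation*}
|I_\ell^{\mathrm{main}}|\lesssim |J_\ell|^\gamma\|v_0(a_\ell)\|_{H^{-s_c}}\prod_{j=1}^{2m+1}\|\phi_{j,k_j(\ell)}\|_{H^{s_c}}.
\end{equation*}

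Then the summation over $\ell$ proceeds via Cauchy--Schwarz: the factor $\bigl(\sum_\ell|J_\ell|^{2\gamma}\bigr)^{1/2}\le T^\gamma$ uses $2\gamma>1$ and $\max_\ell|J_\ell|\le T$, the uniform bound $\max_\ell\|v_0(a_\ell)\|_{H^{-s_c}}\lesssim\|v_0\|_{V^2 H^{-s_c}}=\|u_0\|_{Y^{-s_c}_T}$ uses the embedding $V^2\hookrightarrow L^\infty_t$ (Proposition~\ref{PROP:UpVp0}~(iii)), and the injectivity noted above gives
\begin{equation*}
\sum_\ell\prod_{j\ge 1}\|\phi_{j,k_j(\ell)}\|_{H^{s_c}}^2\le\prod_{j\ge 1}\sum_{k_j}\|\phi_{j,k_j}\|_{H^{s_c}}^2\le 1.
\end{equation*}
Combined, this controls the main contribution by $T^\gamma\|u_0\|_{Y^{-s_c}_T}\prod_j\|u_j\|_{X^{s_c}_T}$.

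The main obstacle is the remainder piece involving $v_0(t)-v_0(a_\ell)$, which does not fit the previous step-function scheme since $V^2$ lacks a direct atomic $\ell^2$-structure. I will handle it by a non-linear Young integration argument in the spirit of \cite[Theorem~2.3]{CG1}: iterating the above decomposition on a sequence of refinements $\pi_N$ with mesh tending to $0$, one uses on each small interval the bound $\|v_0(t)-v_0(a_\ell)\|_{H^{-s_c}}\le\|v_0\|_{V^2(J_\ell)H^{-s_c}}$ combined with a second factor $|J_\ell|^\gamma\jb{\Phi}^{-\rho}$ from the Hölder-in-time regularity of $d\mu_{[0;\cdot]}/dz$, yielding a pointwise bound $|I_\ell^{\mathrm{rem}}|\lesssim|J_\ell|^{2\gamma}\|v_0\|_{V^2(J_\ell)}\prod_{j\ge 1}\|\phi_{j,k_j(\ell)}\|_{H^{s_c}}$. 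Since $2\gamma>1$, this is super-additive and vanishes in the limit, while the main estimate from above persists.
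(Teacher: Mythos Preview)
Your treatment of the main piece is correct and matches the paper: reduce $u_1,\dots,u_{2m+1}$ to $U^2H^{s_c}$-atoms, apply Plancherel in space and the occupation-time formula in time, use \ref{A0gr} to produce the weight $|J_\ell|^\gamma\jb{\Phi}^{-\rho}$, and invoke Proposition~\ref{PROP:multilinearT3}. The injectivity observation and the Cauchy--Schwarz summation yielding $T^\gamma$ are also fine.

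The gap is in the remainder. Your claim that one obtains ``a second factor $|J_\ell|^\gamma\jb{\Phi}^{-\rho}$'' and hence $|I_\ell^{\mathrm{rem}}|\lesssim |J_\ell|^{2\gamma}\cdots$ is unjustified: since $v_0(t)-v_0(a_\ell)$ is \emph{not} constant on $J_\ell$, you cannot pull it out of the time integral and then apply the bound $\big|\widehat{d\mu_{J_\ell}/dz}(\Phi)\big|\lesssim|J_\ell|^\gamma\jb{\Phi}^{-\rho}$. A crude bound loses the $\jb{\Phi}^{-\rho}$ weight and Proposition~\ref{PROP:multilinearT3} no longer applies. A correct sewing argument (with the germ $A_{s,t}=\sum_n \widehat{v_0}(s,n_0)\int_s^t e^{iW_\tau\Phi(n)}d\tau\prod_j\widehat{\phi}_j(n_j)$ and the control $\omega(s,t)=\|v_0\|_{V^2([s,t])H^{-s_c}}^2$) gives only $|I_\ell^{\mathrm{rem}}|\lesssim |J_\ell|^\gamma\,\|v_0\|_{V^2(J_\ell)H^{-s_c}}\prod_j\|\phi_{j,k_j(\ell)}\|_{H^{s_c}}$, using $\gamma+\tfrac12>1$. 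This is still summable (pair $\|v_0\|_{V^2(J_\ell)}$ and $\prod_j\|\phi_{j,k_j(\ell)}\|$ in $\ell^2_\ell$ via superadditivity of $\|\cdot\|_{V^2}^2$ and injectivity, and take $|J_\ell|^\gamma$ in $\ell^\infty_\ell$), so your route can be salvaged, but not with the argument you wrote.

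The paper avoids this detour entirely: it uses the embedding $Y^{-s_c}_T=V^2_{iW\Delta}H^{-s_c}\hookrightarrow U^b_{iW\Delta}H^{-s_c}$ for $2<b\le (1-\gamma)^{-1}$ (equivalently $b'\ge 1/\gamma$, possible precisely because $\gamma>\tfrac12$), so that $u_0$ is also reduced to an atom. All $2m+2$ functions are then step functions, and the summation over the atom indices $(k_0,\dots,k_{2m+1})$ is a single H\"older step, with $T^\gamma$ coming from $\big\||I_{k_0,\dots,k_{2m+1}}|^\gamma\big\|_{\ell^{1/\gamma}}=T^\gamma$. This is both shorter and avoids any sewing machinery.
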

\begin{proof}
Following the argument in Subsubsection~\ref{SUBSUB:NLS} it suffices to estimate
\begin{align}\label{multiT31}
\sum_{N_0\sim N_1\ge N_2\ge...\ge N_{2m+1}}\sum_{\substack{\CC_\ell,\CC_{\wt\ell}\in\CC(N_2)\\|\wt \ell-\ell|\les 1}}\int_0^T\int_{\T^d}\P_{\CC_\ell}\P_{N_0}u_0^{\pm_0}\P_{\CC_{\wt \ell}}\P_{N_1}u_1^{\pm_1}\prod_{j=2}^{2m+1}\P_{N_j}u_j^{\pm_j}dxdt
\end{align}
and
\begin{align}\label{multiT32}
\sum_{N_2\sim N_1\ge N_0\ge...\ge N_{2m+1}}\sum_{\substack{\CC_\ell,\CC_{\wt\ell}\in\CC(N_0)\\|\wt\ell-\ell|\les 1}}\int_0^T\int_{\T^d}\P_{\CC_\ell}\P_{N_2}u_2^{\pm_2}\P_{\CC_{\wt \ell}}\P_{N_1}u_1^{\pm_1}\prod_{j=0,2}^{2m+1}\P_{N_j}u_j^{\pm_j}dxdt.
\end{align}

From the embedding $Y^{-s_c}_T\subset U^b_{iW\Delta}H^{s_c}(\T^d)$ with $2<b$ such that $b'<\frac1\gamma$, and the atomic structure of $U^2$ and $U^b$, it suffices to consider the case where for $j=1,...,2m+1$, $u_j$ are $U^2_{iW\Delta}H^{s_c}$-atoms and $u_0$ is a $U^b_{iW\Delta}H^{-s_c}$-atom:
\begin{align*}
u_j(t,x)=\sum_{k=1}^{K^{(j)}}\mathbf{1}_{[t_{k-1}^{(j)};t_k^{(j)})}e^{iW_t\Delta}\phi_k^{(j)},\qquad \{t_k^{(j)}\}_k\in\ZZ(T),\qquad\|\phi_k^{(j)}\|_{\ell^2_kH^{s_c}}\le 1
\end{align*}
for any $j=1,\dots,2m+1$, and
\begin{align*}
u_0(t,x)=\sum_{k=1}^{K^{(0)}}\mathbf{1}_{[t_{k-1}^{(0)};t_k^{(0)})}e^{iW_t\Delta}\phi_k^{(0)}, \qquad \{t_k^{(0)}\}_k\in\ZZ(T),\qquad \|\phi_k^{(0)}\|_{\ell^b_k H^{-s_c}}\le 1.
\end{align*}

Then, for given $k_0,\dots,k_{2m+1}$, letting 
$$I_{k_0,\dots,k_{2m+1}}=\bigcap_{j=0}^{2m+1}[t_{k_j-1}^{(j)};t_{k_j}^{(j)}),$$
using Plancherel theorem and the occupation time formula \eqref{occupation}, we estimate for any $N_0,\dots,N_{2m+1}$, $\ell,\wt\ell$:
\begin{align*}
&\Big|\int_{I_{k_0,\dots,k_{2m+1}}}\int_{\T^d}\big(\P_{\CC_\ell}\P_{N_0}e^{iW_t\Delta}\phi_{k_0}^{(0)}\big)^{\pm_0}\big(\P_{\CC_{\wt \ell}}\P_{N_1}e^{iW_t\Delta}\phi_{k_1}^{(1)}\big)^{\pm_1}\prod_{j=2}^{2m+1}\big(\P_{N_j}e^{iW_t\Delta}\phi_{k_j}^{(j)}\big)^{\pm_j}dxdt\Big|\\
&=\Big|\sum_{\substack{n_0,...,n_{2m+1}\in\Z^d\\\sum_{j=0}^{2m+1}\pm_jn_j=0}}\int_{I_{k_0,\dots,k_{2m+1}}} e^{iW_t\sum_{j=0}^{2m+1}\pm_j|n_j|^2}dt\\
&\qquad\times\widehat{\P_{\CC_\ell}\P_{N_0}\phi_{k_0}^{(0)}}(n_0)^{\pm_0}\widehat{ \P_{\CC_{\wt\ell}}\P_{N_1}\phi_{k_1}^{(1)}}(n_1)^{\pm_1}\prod_{j=2}^{2m+1}\widehat{\P_{N_j}\phi_{k_j}^{(j)}}(n_j)^{\pm_j}\Big|\\
&= \Big|\sum_{\substack{n_0,...,n_{2m+1}\in\Z^d\\\sum_{j=0}^{2m+1}\pm_jn_j=0}}\widehat{\frac{d\mu_{I_{k_0,\dots,k_{2m+1}}}}{dz}}\big(\sum_{j=0}^{2m+1}\pm_j|n_j|^2\big)\\
&\qquad\times\widehat{\P_{\CC_\ell}\P_{N_0}\phi_{k_0}^{(0)}}(n_0)^{\pm_0}\widehat{\P_{\CC_{\wt\ell}}\P_{N_1}\phi_{k_1}^{(1)}}(n_1)^{\pm_1}\prod_{j=2}^{2m+1}\widehat{ \P_{N_j}\phi_{k_j}^{(j)}}(n_j)^{\pm_j}\Big|\\
&\les |I_{k_0,\dots,k_{2m+1}}|^\gamma \sum_{\substack{n_0,...,n_{2m+1}\in\Z^d\\\sum_{j=0}^{2m+1}\pm_jn_j=0\\ |n_j|\sim N_j,~~n_0\in\CC_\ell,~n_1\in\CC_{\wt\ell}}}\langle\sum_{j=0}^{2m+1}\pm_j|n_j|^2\rangle^{-\rho}\prod_{j=0}^{2m+1}|\ft \phi_{k_j}^{(j)}(n_j)|,
\end{align*}
where the last step follows from assumption \ref{A0gr}. Now, using Proposition~\ref{PROP:multilinearT3}, we can sum on $N_j$ and $\ell,\wt\ell$, to get
\begin{multline*}
\sum_{\substack{N_0\sim N_1\ge N_2\ge...\ge N_{2m+1}\\\CC_\ell,\CC_{\wt\ell}\in\CC(N_2)\\|\wt \ell-\ell|\les 1}}\sum_{\substack{n_0,...,n_{2m+1}\in\Z^d\\\sum_{j=0}^{2m+1}\pm_jn_j=0\\ |n_j|\sim N_j,~~n_0\in\CC_\ell,~n_1\in\CC_{\wt\ell}}}\langle\sum_{j=0}^{2m+1}\pm_j|n_j|^2\rangle^{-\rho}\prod_{j=0}^{2m+1}|\ft \phi_{k_j}^{(j)}(n_j)|\\
\les \|\phi_{k_0}^{(0)}\|_{H^{-s_c}}\prod_{j=1}^{2m+1}\|\phi_{k_j}^{(j)}\|_{H^{s_c}},
\end{multline*}
and similarly for the case $N_1\sim N_2\gtrsim N_0$. It remains to sum on $k_j$ using H\"older's inequality to get
\begin{align*}
\Big|\int_0^T\int_{\T^d}\prod_{j=0}^{2m+1}u_j^{\pm_j}dxdt\Big|&\les \sum_{k_0,\dots,k_{2m+1}}|I_{k_0,\dots,k_{2m+1}}|^\gamma \|\phi_{k_0}^{(0)}\|_{H^{-s_c}}\prod_{j=1}^{2m+1}\|\phi_{k_j}^{(j)}\|_{H^{s_c}}\\
&\les \Big\||I_{k_0,\dots,k_{2m+1}}|^\gamma\Big\|_{\ell^2_{k_1,\dots,k_{2m+1}}\ell^{b'}_{k_0}}\|\phi_{k_0}^{(0)}\|_{\ell^b_{k_0} H^{-s_c}}\prod_{j=1}^{2m+1}\|\phi_{k_j}^{(j)}\|_{\ell^2_{k_j} H^{s_c}}\\
&\les \Big\||I_{k_0,\dots,k_{2m+1}}|^\gamma\Big\|_{\ell^\gamma_{k_0,\dots,k_{2m+1}}}=T^\gamma.
\end{align*}
This finally proves \eqref{multiT34} in view of the definition of the atomic spaces $X^{s_c}_T$ and $U^b_{iW\Delta}H^{-s_c}$.
\end{proof}

We can now finish the proof of Theorem~\ref{THM:NLScrit}.

\begin{proof}[Proof of Theorem~\ref{THM:NLScrit}]
For $u_0\in H^{s_c}(\T^d)$, $R=2\|u_0\|_{H^{s_c}}$, and $T\in(0;1]$, define 
$$B_R(T):=\big\{u\in C([0;T);H^{s_c}(\T^d))\cap X^{s_c}_T,~~\|u\|_{X^{s_c}_T}\le R\big\}$$
which is closed in $X^{s_c}_T$, and
\begin{align*}
\Phi_{u_0}: u\in X^{s_c}_T\mapsto e^{i(W_t-W_0)\Delta}u_0\pm i\int_0^t e^{i(W_t-W_{\tau})\Delta}|u|^4u(\tau)d\tau.
\end{align*}
We will show that $\Phi_{u_0}$ defines a contraction on $B_R(T)$. From Proposition~\ref{PROP:UpVp}, the multilinearity of $\NN(u)=|u|^{2m}u$, and Proposition~\ref{PROP:multilinearT3bis}, we have for any $u,v\in B_R(T)$:
\begin{align*}
\big\|\Phi_{u_0}(u)-\Phi_{v_0}(v)\big\|_{X^{s_c}_T}&\le \|u_0-v_0\|_{H^{s_c}}+\sup_{\|w\|_{Y^{-s_c}_T}\le 1}\Big|\int_0^T\int_{\T^d}\big(|u|^{2m}u-|v|^{2m}v\big)\cj w dxdt\Big|\\
&\le \|u_0-v_0\|_{H^{s_c}} +CT^\gamma\|w\|_{Y^{-s_c}_T}\|u-v\|_{X^{s_c}_T}\big(\|u\|_{X^{s_c}_T}^4+\|v\|_{X^{s_c}_T}^4\big).
\end{align*}
Thus, taking $T(R)\sim \jb{R}^{-\frac{2m}\gamma}$ ensures that $\Phi_{u_0}$ defines a contraction on $B_R(T)$, thus providing a mild solution $u\in C([0;T);H^{s_c}(\T^d))$ to \eqref{EQ} on $[0;T)$, unique in $B_R(T)$. The uniqueness in the whole space $C([0;T);H^{s_c}(\T^d))\cap X^{s_c}_T$ follows from iterating the fixed point argument as in the proof of Theorem~\ref{THM:noreg}, and the estimate above shows the Lipschitz continuity of the flow map. This concludes the proof of Theorem~\ref{THM:NLScrit}.
\end{proof}

\begin{remark}\label{REM:NLSsphere}
\rm
A straightforward adaptation of the arguments in \cite{BGT2,Herr} following the proof of Theorem~\ref{THM:NLScrit} above allows to extend Theorem~\ref{THM:NLScrit} to the cases $\M=\SS^2$, $m=1$, $s>\frac14$; and $\M=\SS^3$, $m=2$, $s\ge s_c=1$, respectively.
\end{remark}

\section{The periodic cubic Wick-ordered modulated NLS equation}\label{SEC:NLSFL}
In this last section we give the proof of Theorem~\ref{THM:NLSFL}. First, as usual \cite{GrunrockHerr,GuoOh}, we rewrite the Wick-ordered cubic nonlinearity in Fourier as
\begin{align*}
\NN(u)&:=(|u|^2-2\|u\|_{L^2}^2)u= \sum_{n_0\in\Z}e^{in_0x}\Big\{\sum_{\substack{n_1-n_2+n_3=n_0\\
n_2\neq n_1,n_3}}\ft u_{n_1}\cj{\ft u_{n_2}}\ft u_{n_3} - |\ft u_n|^2\ft u_n\Big\}\\
& =: \NN_0(u)+\RR(u).
\end{align*}
Abusing notations, we also write $\NN_0(u,v,w)$ and $\RR(u,v,w)$ for the trilinear forms such that $\NN_0(u)=\NN_0(u,u,u)$ and $\RR(u)=\RR(u,u,u)$.
 
Let then $s\ge 0$ and $r\in[1;\infty]$. As in the proof of Theorem~\ref{THM:NLScrit}, using assumption \ref{A0gr} with $\gamma>\frac12$ and $\rho>\frac1{r'}$, we get that for any interval $I\subset\R$, any $u_j\in \F L^{s,r}(\T)$, $j=1,2,3$, and $u_0\in\F L^{-s,r'}(\T)$, it holds
\begin{align*}
&\Big|\int_I\int_\T \NN_0\big(e^{iW_t\dx^2}u_1,e^{iW_t\dx^2}u_2,e^{iW_t\dx^2}u_3\big)\cj{e^{iW_t\dx^2}u_0} dxdt\Big|\\
&\qquad= \Big|\sum_{\substack{n_0,...,n_3\in\Z\\\sum_{j=0}^3\pm_jn_j=0\\n_2\neq n_1,n_3}}\widehat{\frac{d\mu_{I}}{dz}}\big(\sum_{j=0}^3\pm_jn_j^2\big)\prod_{j=0}^3\ft u_j(n_j)^{\pm_j}\Big|\\
&\qquad\les |I|^\gamma\sum_{\substack{n_0,...,n_3\in\Z\\\sum_{j=0}^3\pm_jn_j=0\\n_2\neq n_1,n_3}}\jb{(n_1-n_2)(n_0-n_2)}^{-\rho}\prod_{j=0}^3|\ft u_j(n_j)|
\intertext{where we used the exact factorization of ${\displaystyle \sum_{j=0}^3\pm_jn_j^2}$ under the convolution constraint ${\displaystyle\sum_{j=0}^3\pm_jn_j=0}$. Using now H\"older and Young inequalities together with ${\displaystyle \jb{n_0}^s\les \sum_{j=1}^3\jb{n_j}^s}$ under the convolution constraint since $s\ge 0$, we can continue with}
&\qquad\les |I|^\gamma\|u_0\|_{\F L^{-s,r'}}\Big\|\jb{n_0}^s\sum_{\substack{n_1,n_2,n_3\in\Z\\n_1-n_2+n_3=n_0\\n_2\neq n_1,n_3}}\jb{(n_1-n_2)(n_0-n_2)}^{-\rho}\prod_{j=1}^3\ft u_j(n_j)\Big\|_{\ell^r_{n_0}}\\
&\qquad\les |I|^\gamma\|u_0\|_{\F L^{-s,r'}}\Big(\sum_{j=1}^3\|u_j\|_{\F L^{s,r}}\prod_{\substack{\ell=1,2,3\\\ell\neq j}}\|u_\ell\|_{\F L^{0,r}}\Big)\\
&\qquad\qquad\times\Big\{\sup_{n_0\in\Z}\sum_{\substack{n_1,n_2,n_3\in\Z\\n_1-n_2+n_3=n_0\\n_2\neq n_1,n_3}}\jb{(n_1-n_2)(n_0-n_2)}^{-r'\rho}\Big\}^{\frac1{r'}}.
\end{align*}

Thus it remains to bound uniformly in $n_0\in\Z$:
\begin{align*}
\sum_{\substack{n_1,n_2,n_3\in\Z\\n_1-n_2+n_3=n_0\\n_2\neq n_1,n_3}}\jb{(n_1-n_2)(n_0-n_2)}^{-r'\rho}&= \sum_{m_1,m_2\neq 0}\jb{m_1m_2}^{-r'\rho}\les 1
\end{align*}
from our assumption that $\rho>\frac1{r'}$. This shows that
\begin{align}\label{NN0}
&\Big|\int_I\int_\T \NN_0\big(e^{iW_t\dx^2}u_1,e^{iW_t\dx^2}u_2,e^{iW_t\dx^2}u_3\big)\cj{e^{iW_t\dx^2}u_0} dxdt\Big|\notag\\
&\qquad\qquad\les |I|^\gamma\|u_0\|_{\F L^{-s,p'}}\Big(\sum_{j=1}^3\|u_j\|_{\F L^{s,p}}\prod_{\substack{\ell=1,2,3\\\ell\neq j}}\|u_\ell\|_{\F L^{0,p}}\Big).
\end{align}

We estimate the contribution of $\RR$ similarly (note that ${\displaystyle \sum_{j=0}^3\pm_j n_j^2=0}$ on its support):
\begin{align}\label{RR}
&\Big|\int_I\int_\T \RR\big(e^{iW_t\dx^2}u_1,e^{iW_t\dx^2}u_2,e^{iW_t\dx^2}u_3\big)\cj{e^{iW_t\dx^2}u_0} dxdt\Big|\notag\\
&\qquad\qquad\les |I|^\gamma \sum_{n_0\in\Z}\prod_{j=0}^3|\ft u_j(n_0)|\les |I|^\gamma\|u_0\|_{\F L^{-s,r'}}\Big(\sum_{j=1}^3\|u_j\|_{\F L^{s,r}}\prod_{\substack{\ell=1,2,3\\\ell\neq j}}\|u_\ell\|_{\F L^{0,r}}\Big).
\end{align}

Now, for $R>0$ and $T\in(0;1]$, we again consider $$B_R(T):=\big\{u\in C([0;T);\F L^{s,r}(\T))\cap X^{s,r}_T,~~\|u\|_{X^{s,r}_T}\le R\big\}$$
which is closed in $X^{s,r}_T$.

Given $u_0\in \F L^{s,r}(\T)$, we set $R=2\|u_0\|_{\F L^{s,r}}$, and consider the map
$$\Phi_{u_0} : u\in B_R(T)\mapsto e^{i(W_t-W_0)\dx^2}u_0 \pm i \int_0^te^{i(W_t-W_{t'})\dx^2}\NN(u)(t')dt'.$$


By Proposition~\ref{PROP:UpVp}~(ii) and~(iii), we have
\begin{align*}
\big\|\Phi_{u_0}(u)-\Phi_{v_0}(v)\big\|_{X^{s,r}_T}\le \|u_0-v_0\|_{\F L^{s,r}}+\sup_{\substack{w\in Y^{-s,r'}_T\\\|w\|_{Y^{-s,r'}_T}\le 1}}\Big|\int_0^T\int_\M \big(\NN(u)-\NN(v)\big)\cj w dxdt\Big|.
\end{align*}
Since $u,v\in X^{s,r}_T=U^2_{W\L}([0;T))\F L^{s,r} (\T)$ and from the embedding of Proposition~\ref{PROP:UpVp}~(i), $w\in Y^{-s,r'}_T=V^2_{iW\dx^2}([0;T))\F L^{-s,r'}(\T)\subset U^b_{iW\Delta}([0;T))\F L^{-s,r'}(\T)$ for $b=\big(\frac1\gamma\big)'>2$, using the atomic structure of $U^2$ and $U^b$ and the multilinearity of $\NN$, we can assume that $u,v$ (respectively $w$) are $U^2$ (respectively $U^b$) atoms:
\begin{align*}
u=\sum_{k=1}^{K^{(1)}}\mathbf{1}_{[t_{k-1}^{(1)};t_{k}^{(1)})}e^{iW_t\dx^2}\phi_{k}^{(1)}\text{ with }\|\phi_{k}^{(1)}\|_{\ell^2_{k}\F L^{s,r}}\le 1,
\end{align*}
\begin{align*}
v=\sum_{k=1}^{K^{(2)}}\mathbf{1}_{[t_{k-1}^{(2)};t_{k}^{(2)})}e^{iW_t\dx^2}\phi_{k}^{(2)}\text{ with }\|\phi_{k}^{(2)}\|_{\ell^2_{k}\F L^{s,r}}\le 1,
\end{align*}
and
\begin{align*}
w=\sum_{k=1}^{K^{(0)}}\mathbf{1}_{[t_{k-1}^{(0)};t_{k}^{(0)})}e^{iW_t\dx^2}\phi_{k}^{(0)}\text{ with }\|\phi_{k}^{(0)}\|_{\ell^b_{k}\F L^{-s,r'}}\le 1,
\end{align*}
We can also assume
\begin{align*}
u-v=\sum_{k=1}^{K^{(3)}}\mathbf{1}_{[t_{k-1}^{(3)};t_{k}^{(3)})}e^{iW_t\dx^2}\phi_{k}^{(3)}\text{ with }\|\phi_{k}^{(3)}\|_{\ell^2_{k}\F L^{s,r}}\le 1.
\end{align*}
Then, writing the nonlinearity as
\begin{align*}
\NN(u)-\NN(v)=\NN(u-v,u,u)+\NN(v,u-v,u)+\NN(v,v,u-v)
\end{align*}
and plugging the atomic decompositions into the Duhamel formula and using \eqref{NN0}-\eqref{RR} together with H\"older's inequality, we find
\begin{align*}
&\Big|\int_0^T\int_\M \NN(u-v,u,u)\cj w dxdt\Big|\\&\le \sum_{k_0,k_1,k_1',k_3}\Big|\int_{I_{k_0,k_1,k_1',k_3}}\int_\M \NN\big(e^{iW_t\dx^2}\phi_{k_3}^{(3)},e^{iW_t\dx^2}\phi_{k_1}^{(1)},e^{iW_t\dx^2}\phi_{k_1'}^{(1)}\big)\cj{e^{iW_t\dx^2}\phi_{k_0}^{(0)}} dxdt\Big|\\
&\les \sum_{k_0,k_1,k_1',k_3}\big|I_{k_0,k_1,k_1',k_3}\big|^\gamma \|\phi_{k_0}^{(0)}\|_{\F L^{-s,r'}}\|\phi_{k_3}^{(3)}\|_{\F L^{s,r}}\|\phi_{k_1}^{(1)}\|_{\F L^{s,r}}\|\phi_{k_1'}^{(1)}\|_{\F L^{s,r}}\\
&\les \Big\|\big|I_{k_0,k_1,k_1',k_3}\big|^\gamma\Big\|_{\ell^2_{k_1,k_1',k_3}\ell^{b'}_{k_0}}
\end{align*}
where
\begin{align*}
I_{k_0,k_1,k_1',k_3}=[t_{k_1'-1}^{(1)};t_{k_1'}^{(1)})\bigcap_{j=0,1,3}[t_{k_j-1}^{(j)};t_{k_j}^{(j)}).
\end{align*}
Since $\{t_{k_j}^{(j)}\}\in\ZZ(T)$, with our choice of $b=\big(\frac1\gamma\big)'>2$, we have
\begin{align*}
\Big\|\big|I_{k_0,k_1,k_1',k_3}\big|^\gamma\Big\|_{\ell^2_{k_1,k_1',k_3}\ell^{b'}_{k_0}}\le \Big\|\big|I_{k_0,k_1,k_1',k_3}\big|^\gamma\Big\|_{\ell^{b'}_{k_0,k_1,k_1',k_3}}=T^\gamma.
\end{align*}
This finally leads to
\begin{align*}
\Big|\int_0^T\int_\M \big(\NN(u)-\NN(v)\big)\cj w dxdt\Big|\les T^\gamma\|u-v\|_{X^{s,r}_T}\big(\|u\|_{X^{s,r}_T}+\|v\|_{X^{s,r}_T}\big)^2\|w\|_{Y^{-s,r'}_T},
\end{align*}
hence
\begin{align*}
\big\|\Phi_{u_0}(u)-\Phi_{v_0}(v)\big\|_{X^{s,r}_T}&\le \|u_0-v_0\|_{\F L^{s,r}}+CT^\gamma\|u-v\|_{X^{s,r}_T}\big(\|u\|_{X^{s,r}_T}+\|v\|_{X^{s,r}_T}\big)^2\\
&\le \|u_0-v_0\|_{\F L^{s,r}}+CT^\gamma (2R)^2\|u-v\|_{X^{s,r}_T}
\end{align*}
as $u,v\in B_R(T)$. Taking $T=T(R)=T(\|u_0\|_{\F L^{s,r}})\sim(1+\|u_0\|_{\F L^{s,r}})^{-\frac2\gamma}$, the estimate above shows that $\Phi_{u_0}$ defines a contraction on $B_R(T)$, thus having a unique fixed point in $B_R(T)$ which solves the mild formulation \eqref{Duhamel}. Uniqueness of the solution in the whole class $X^{s,r}_T$ follows from a usual argument as in the proof of Theorem~\ref{THM:noreg}. At last, the estimate above shows the Lipschitz continuity of the flow with respect to initial data.

\end{document}